\renewcommand{\marginpar}[2][]{}
\newtheorem{theorem}{Theorem}
\newtheorem{proposition}{Proposition}
\newtheorem{lemma}{Lemma}
\newtheorem{remark}{Remark}
\newtheorem{corollary}{Corollary}
\newtheorem*{proposition*}{Proposition}
\newtheorem*{remark*}{Remark}
\newcommand{\Sym}{\operatorname{Sym}}
\author{Maksym Radziwi\l\l}
\address{UT Austin, Department of Mathematics, PMA 8.100, 2515 Speedway, Stop C1200, Austin, TX 78712}
\email{maksym.radziwill@gmail.com}
\author{Liyang Yang}
\address{Fine Hall, 304 Washington Rd, Princeton, NJ 08544, USA}
\email{liyangy@princeton.edu}
\title{Non-vanishing of twists of $GL_4(\mathbb{A}_{\mathbb{Q}})$ $L$-functions}
\begin{document}

\begin{abstract}
  Let $\pi$ be a unitary cuspidal automorphic representation of $\mathrm{GL}_{4}(\mathbb{A}_{\mathbb{Q}})$. Let $f \geq 1$ be given. We show that there exists infinitely many primitive even (resp. odd) Dirichlet characters $\chi$ with conductor coprime to $f$ such that $L(s, \pi \otimes \chi)$ is non-vanishing at the central point.

 Our result has applications for the construction of $p$-adic $L$-functions for $\text{GSp}_4$ following Loeffler-Pilloni-Skinner-Zerbes, the Bloch-Kato conjecture and the Birch-Swinnerton-Dyer conjecture for abelian surfaces following Loeffler-Zerbes, strong multiplicity one results for paramodular cuspidal representations of $\text{GSp}_4(\mathbb{A}_{\mathbb{Q}})$ and the rationality of the central values of $\text{GSp}_4(\mathbb{A}_{\mathbb{Q}})$ $L$-functions in the remaining non-regular weight case.
  \end{abstract}
\maketitle

Throughout we will always assume that all unitary cuspidal automorphic representations $\pi$ are normalized so that the central characters of $\pi$ are trivial on the diagonally embedded copy of the positive reals.

\marginpar{Also mention work in progress of X.Li and others}

The following is our main result.
\begin{theorem} \label{thm:nonvanishing}
  Let $\pi$ be a unitary cuspidal automorphic representation of $\mathrm{GL}_4(\mathbb{A}_{\mathbb{Q}})$. Let $f \geq 1$ be given.
  There exists infinitely many
  primitive even (resp. odd) Dirichlet characters $\chi$ with conductor coprime to $f$ such that,
  $$
  L(\tfrac 12, \pi \otimes \chi) \neq 0.
  $$
\end{theorem}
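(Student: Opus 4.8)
\emph{The plan.} The plan is to establish non-vanishing for many characters at once by comparing a first and a second moment of $L(\tfrac12,\pi\otimes\chi)$, exploiting the freedom to average over the conductor of $\chi$. Fix a parity $\kappa\in\{+1,-1\}$, and for a large $Q$ let $\mathcal{F}(Q)$ be the set of primitive Dirichlet characters $\chi$ with $\chi(-1)=\kappa$ whose conductor $q$ satisfies $q\le Q$ and $(q,f)=1$; then $|\mathcal{F}(Q)|\asymp Q^2$. Set $S_1(Q)=\sum_{\chi\in\mathcal{F}(Q)}L(\tfrac12,\pi\otimes\chi)$ and $S_2(Q)=\sum_{\chi\in\mathcal{F}(Q)}|L(\tfrac12,\pi\otimes\chi)|^2$. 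By Cauchy--Schwarz, $\#\{\chi\in\mathcal{F}(Q):L(\tfrac12,\pi\otimes\chi)\neq 0\}\ge |S_1(Q)|^2/S_2(Q)$, so it suffices to prove
\[
S_1(Q)=c_\pi Q^2\bigl(1+o(1)\bigr)\ \text{with}\ c_\pi>0,\qquad\text{and}\qquad S_2(Q)\ll Q^{2+o(1)}.
\]
Indeed these yield $\gg Q^{2-o(1)}$ characters in $\mathcal{F}(Q)$ with $L(\tfrac12,\pi\otimes\chi)\neq 0$; since a fixed $\chi$ lies in only finitely many $\mathcal{F}(Q)$, letting $Q\to\infty$ produces infinitely many, for each parity and with conductor coprime to $f$. (Inserting a mollifier would upgrade this to a positive proportion but does not affect the main difficulty.)

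\emph{The second moment.} By the approximate functional equation together with a dyadic decomposition, for $\chi$ of conductor $q$ one has $|L(\tfrac12,\pi\otimes\chi)|\ll(\log q)^{O(1)}\max_{N\ll q^{2+\varepsilon}}\bigl|\sum_{n\sim N}\lambda_\pi(n)\chi(n)n^{-1/2}w(n/N)\bigr|$ plus the mirror sum in $\overline{\chi}$, with $w$ a fixed smooth bump. Hence, up to a factor $Q^{o(1)}$, $S_2(Q)$ is dominated by a sum over dyadic $N\ll Q^{2+\varepsilon}$ of $\sum_{q\le Q}\sum^{*}_{\chi\bmod q}\bigl|\sum_{n\sim N}\lambda_\pi(n)\chi(n)n^{-1/2}w(n/N)\bigr|^2$, and the large sieve inequality for Dirichlet characters together with $\sum_{n\le N}|\lambda_\pi(n)|^2\ll N^{1+\varepsilon}$ (Rankin--Selberg) bounds each block by $(Q^2+N)N^{\varepsilon}\ll Q^{2+\varepsilon}$. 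Therefore $S_2(Q)\ll Q^{2+\varepsilon}$. The average over $q$ is essential: for a single modulus this would require $\mathrm{GL}_4\times\mathrm{GL}_4$ shifted-convolution estimates, which are unavailable.

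\emph{The first moment --- the main obstacle.} Opening $L(\tfrac12,\pi\otimes\chi)$ by its approximate functional equation (of length $\asymp q^2$ on each side, with root number $\varepsilon(\pi\otimes\chi)=\varepsilon(\pi)\chi(N_\pi)\tau(\chi)^4q^{-2}$) and summing over $\mathcal{F}(Q)$ by orthogonality of primitive characters, the diagonal term $n=1$ contributes $\tfrac12\sum_{q\le Q,\,(q,f)=1}\varphi^{*}(q)\,V_1(1/q^{2})=c_\pi Q^2+o(Q^2)$, where $\varphi^{*}(q)$ is the number of primitive characters modulo $q$; here $c_\pi>0$ since every term is non-negative and $V_1(1/q^2)\to 1$, and a short local computation (accounting for primes dividing $f$) evaluates $c_\pi$. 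What remains is (i) the off-diagonal of the principal part, $\sum_{q\le Q}\varphi(q)\sum_{n\ge2,\,n\equiv\pm1\,(q)}\lambda_\pi(n)n^{-1/2}V_1(n/q^2)$, and (ii) the dual part, which after expanding $\tau(\chi)^4$ and applying orthogonality becomes, up to lower-order terms, $\varepsilon(\pi)\sum_{q\le Q}\varphi(q)q^{-2}\sum_{n\ll q^2}\overline{\lambda_\pi(n)}\,n^{-1/2}\bigl(\mathrm{Kl}_4(n\overline{N_\pi};q)+\kappa\,\mathrm{Kl}_4(-n\overline{N_\pi};q)\bigr)V_2(n/q^2)$, a sum of $\mathrm{GL}_4$ Hecke eigenvalues against hyper-Kloosterman sums. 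Both (i) and (ii) must be shown to be $o(Q^2)$, which for the dual part means a genuine power saving over its trivial bound $\asymp Q^{5/2}$. Applying the $\mathrm{GL}_4$ Voronoi summation formula in the modulus $q$ converts (i) into a dual sum of shape $\sum_{m\ll q^2}\lambda_\pi(m)m^{-1/2}\bigl(\sum_{b\bmod q}e(-b/q)\,\mathrm{Kl}_3(\pm\overline{b}\,m;q)\bigr)\omega(m/q^2)$; Deligne's bound disposes of the complete $b$-sum (a three-variable exponential sum over $\mathbb{F}_q$ of size $O(q^{3/2+\varepsilon})$ for generic $m$), leaving precisely a \emph{long correlation sum} $\sum_{m\le q^2}\lambda_\pi(m)K(m;q)$ of $\mathrm{GL}_4$ eigenvalues against a Kloosterman-type trace function $K$ modulo $q$, and a parallel reduction applies to (ii). Establishing the requisite cancellation in these correlation sums, averaged over $q\le Q$, is the heart of the argument and, I expect, the principal obstacle: it is the $\mathrm{GL}_4$ analogue of the algebraic-twist bounds of Fouvry--Kowalski--Michel (unconditional for $\mathrm{GL}_1,\mathrm{GL}_2$ and largely for $\mathrm{GL}_3$), and is closely tied to subconvexity for $L(\tfrac12,\pi\otimes\chi)$ in the conductor aspect. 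One should attack it by amplification and the theory of trace functions and Fourier sheaves, crucially using the extra average over $q$ so that only an on-average estimate is needed rather than an individual one; alternatively, one may realise $S_1(Q)$ as the spectral side of a relative trace formula, the off-diagonal orbital integrals then playing the role of the correlation sums above.

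\emph{Conclusion.} Granting the two displayed estimates, Cauchy--Schwarz produces $\gg Q^{2-o(1)}$ characters $\chi\in\mathcal{F}(Q)$ with $L(\tfrac12,\pi\otimes\chi)\neq 0$, and letting $Q\to\infty$ gives infinitely many primitive even, respectively odd, Dirichlet characters of conductor coprime to $f$ at which $L(\tfrac12,\pi\otimes\chi)\neq 0$, as asserted.
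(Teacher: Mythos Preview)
Your reduction is sound up to the point where you need cancellation in the off-diagonal of the first moment, but that is exactly where the proposal stops being a proof. You correctly reduce the dual part to correlation sums of the shape $\sum_{m\le q^2}\lambda_\pi(m)K(m;q)$ with $K$ a hyper-Kloosterman-type trace function, and then appeal to a ``$\mathrm{GL}_4$ analogue of the algebraic-twist bounds of Fouvry--Kowalski--Michel''. No such bound is available: those results are known for $\mathrm{GL}_2$ and partially for $\mathrm{GL}_3$, and obtaining them for $\mathrm{GL}_4$ is essentially equivalent to subconvexity for $L(\tfrac12,\pi\otimes\chi)$ in the conductor aspect, which is open. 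The same applies to the principal off-diagonal after your Voronoi step. So the scheme $|S_1|^2/S_2$ is fine, and the large-sieve bound for $S_2$ is fine, but the asymptotic for $S_1$ that you need is precisely the hard part, and your proposal does not supply a method to prove it.

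The paper circumvents this obstacle by a quite different mechanism. It does \emph{not} use $\mathrm{GL}_4$ Voronoi and does \emph{not} attempt to correlate $\lambda_\pi$ with trace functions. Instead it restricts the average to a thin set of moduli $q=p_1p_2m$ with carefully tuned prime factorisation (a small prime $p_1\sim(\log Q)^{\kappa\nu}$, a medium prime $p_2\sim(\log Q)^{10000}$, and $m$ with at most $\delta\log\log Q$ large prime factors). For the dual sum one writes $q=rs$, applies Cauchy--Schwarz in the $m$-variable to \emph{remove} the coefficients $\lambda_{\tilde\pi}(m)$ entirely, and is left with a complete exponential sum in $m\pmod{rs_1s_2}$ built out of products of the $\mathcal{T}_4$-sums; twisted multiplicativity plus Deligne's bound for hyper-Kloosterman sums then gives a saving of $R^{-1/4}$ with only a $(\log Q)^{O(\delta)}$ loss from the divisor function. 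For the principal off-diagonal one does \emph{not} invoke Voronoi either: one uses a Ramar\'e-type identity to force a bilinear structure $n=pm$ with $p$ in a controlled range, and then a dispersion estimate in the style of Fouvry--Radziwi\l\l\ (ultimately resting on the Duke--Friedlander--Iwaniec bound for Kloosterman fractions and a Siegel--Walfisz theorem for $\lambda_\pi$). The parameters $\delta,\nu$ are finally balanced so that the density gain $\delta\log(1/\delta)$ from the thin set of moduli beats the Deligne loss $C\delta$. This is a first-moment-only argument; no second moment is needed.
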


Theorem \ref{thm:nonvanishing} is at the limit of analytic methods. For cuspidal automorphic forms $\pi \in \mathrm{GL}_k(\mathbb{A}_{K})$ with $k \in \{2,3\}$ and  $K$ a number field the analogue of Theorem \ref{thm:nonvanishing} is due to Shimura \cite{Shi77}, Rohrlich \cite{Roh89}, Barthel-Ramakrishnan \cite{BR94} and Luo \cite{Luo05}. For $\pi \in \mathrm{GL}_4(\mathbb{A}_{\mathbb{Q}})$ Luo established in \cite{Luo05} the non-vanishing of $L(\sigma, \pi \otimes \chi)$ for all $0 < \sigma < 1$ \textit{except} for $\sigma = \tfrac 12$. We also mention that the isobaric case $\pi_1 \boxplus \pi_2$ with $\pi_1 \in \text{GL}_{n_1}(\mathbb{A}_{\mathbb{Q}})$ and $\pi_2 \in \text{GL}_{n_2}(\mathbb{A}_{\mathbb{Q}})$ and $n_1 + n_2 = 4$ is resolved in \cite{Fouvry} ($n_1 = 2 = n_2$) and in recent work of Li-Li-Lin \cite{LiLiLin} ($n_1 = 3$, $n_2 = 1$). 





Our approach builds and expands on Luo's approach. The point $\sigma = \tfrac 12$ is particularly interesting because of many arithmetic applications, some of which we detail below. We note also that our proof can be easily modified to give non-vanishing at $\tfrac 12 + it$ for any fixed $t \in \mathbb{R}$. 

  \marginpar{Give a slightly better account of the references}

In contrast algebraic methods are able to access forms of higher rank, but only when the form is of cohomological type and provided that there exists critical values of the $L$-function associated with $\pi$ outside of the critical strip $0 < \Re s < 1$. For example if $\pi \in \mathrm{GL}_{2n}(\mathbb{A}_{F})$ with $F$ a totally real field is of cohomological type, $L(\tfrac 32, \pi)$ is a critical value and $\pi$ satisfies some additional minor local conditions at a prime $p$ then \cite{DJR20} show that for all primitive characters $\chi$ of conductor $p^k$ with $k$ sufficiently large $L(\tfrac 12, \pi \otimes \chi) \neq 0$. To appreciate the applications of Theorem \ref{thm:nonvanishing} it is important for us to put this result and its assumptions in context. The value $L(\tfrac 32, \pi)$ is outside of the critical strip and thus necessarily non-zero. By establishing a congruence relationship between $L(\tfrac 32, \pi \otimes \chi)$ and $L(\tfrac 12, \pi \otimes \chi)$ the authors of \cite{DJR20} deduce the non-vanishing of $L(\tfrac 12, \pi \otimes \chi)$. This approach cannot be executed when the form is not of cohomological type or when there are no critical points outside of the critical strip. A simple concrete example of the former is $\text{Sym}^3 \phi$ with $\phi$ a classical Hecke-Maass form of level 1 and eigenvalue $> \tfrac 14$, while an example of the latter is $\text{Sym}^3 \phi$ with $\phi$ a classical holomorphic modular form of odd weight and non-trivial level. All of the new applications of Theorem \ref{thm:nonvanishing} concern the case of forms of cohomological type \textit{without} critical values outside of the critical strip. We now explain those applications in more detail below. 

\subsection{Applications} 

Let $\Pi$ be a cuspidal automorphic representation of $\text{GSp}_4(\mathbb{A}_{\mathbb{Q}})$ which is non-CAP, globally generic and cohomological with coefficients in the algebraic representation of highest weight $(r_1, r_2)$. In \cite{LPS21} and  \cite{DJR20} a $p$-adic measure interpolating the critical values of $L(s, \Pi \otimes \chi)$ is constructed provided that $r_1 > r_2$. In addition in \cite[Theorem A]{LPS21} the existence of such a measure for some $p$ in the case $r_1 = r_2$ was established conditionally on the existence of a character $\chi$ for which $L(\tfrac 12, \Pi \otimes \chi) \neq 0$. This now follows from our theorem and the generic transfer given in \cite{AS06}. Historically assumptions of this type go back to the work of Ash and Ginzburg \cite{AG94}. Note incidentally that in the case $r_1 = r_2$ the $L$-function $L(s, \pi)$ has no critical points outside of the critical strip.  \marginpar{They actually demand a character of order $p^k$; how does our result imply that? I guess it's some adelic factorization??} 

In \cite{Grobner} the rationality of the critical values $\Lambda(\tfrac 12, \Pi \otimes \chi)$ is established for $r_1 \geq r_2 \geq 0$. The case $r_1 > r_2 = -1$ is then proven unconditionally in \cite[Theorem 3.1.6]{LZ21} and the remaining case $r_1 = r_2 = -1$ is proven in \cite[Theorem 3.1.6]{LZ21} conditionally on the existence of a primitive character $\chi$ with specific parity such that $L(\tfrac 12, \Pi \otimes \chi) \neq 0$. This now follows from our Theorem \ref{thm:nonvanishing}.

Similarly a case of the Bloch-Kato conjecture is established in \cite[Theorem B]{LZ21} in the case $r_1 = r_2$ conditionally on the existence of a primitive character $\chi$ such that $L(\tfrac 12, \Pi \otimes \chi) \neq 0$ and two additional local conditions. Our Theorem \ref{thm:nonvanishing} removes the non-vanishing assumption. This special case of the Bloch-Kato conjecture is then used in \cite{LZ21} to establish a special of the Birch and Swinnerton-Dyer conjecture for abelian varieties (still conditional on other assumptions, such as the modularity of the abelian surface).

Finally, in conjunction with \cite{AS06}, our main theorem verifies Hypothesis 1.1 in \cite{RW17}, leading to consequences towards (strong) multiplicity one for paramodular cuspidal representation of $\text{GSp}_4$. For instance, if $\Pi$ is a paramodular cuspidal representation of $\text{GSp}_4(\mathbb{A}_{\mathbb{Q}})$ with archimedean component $\Pi_{\infty}$ in the discrete series and the local representation $\Pi_p$ spherical at $p=2,$ then it now follows from our Theorem \ref{thm:nonvanishing} that $\Pi$ occurs in the cuspidal spectrum with multiplicity one.

\subsection{Quantitative non-vanishing}

Theorem \ref{thm:nonvanishing} follows from a stronger result computing the mean-value
$$
\sum_{\substack{q \in \mathcal{Q} \\ \chi \pmod{q} \\ \text{primitive}}} \chi(\pm 1) L(\tfrac 12, \pi \otimes \chi)
  $$
for a set of moduli $\mathcal{Q} \subset [Q, 2Q]$ co-prime to all primes smaller than $(\log Q)^{\varepsilon}$ for some small
$\varepsilon > 0$.
In order to state the result we need to introduce the notion of a \textit{Siegel-Walfisz sequence}. We will say that a sequence $\alpha(n)$ is \textit{Siegel-Walfisz of level $\kappa > 0$} if for every $x \geq 10$, and $(a, q) = 1$ with $q \leq (\log x)^{\kappa}$, $|t| \leq (\log x)^{\kappa}$ we have,
$$
\sum_{\substack{p \leq x \\ p \equiv a \pmod{q}}} \alpha(p) p^{it} \ll_{A} \frac{x}{(\log x)^{A}}
$$
for any given $A > 10$. It is known that the coefficients $\lambda_{\pi}$ of the $L$-funtions $L(s, \pi)$ are Siegel-Walfisz for some small $\kappa > 0$ following the work of Brumley \cite{Bru06}. Moreover if one allows for ineffective results (i.e results in which the implicit constant in the $\ll$ cannot be specified) then it follows from the very recent work \cite{TH} that $\lambda_{\pi}$ is Siegel-Walfisz for any $\kappa > 0$. Throughout Brumley's effective result is all that we will ever need. In particular, with some work it is possible to exhibit a completely explicit function $g$ such that for any given $\pi$ there exists a primitive character $\chi$ of conductor $q \leq g(N_{\pi})$ where $N_{\pi}$ is the conductor of $\pi$ and such that $L(\tfrac 12, \pi \otimes \chi)$ is non-zero.

We are now ready to state our main technical result. We will use the notation $a \sim A$ to mean that $a \in [A, 2A)$. This notation will be never used to mean an asymptotic relation.

\begin{theorem}\label{thm:main}
  Let $\pi$ be a unitary cuspidal automorphic representation of $\mathrm{GL}_4(\mathbb{A}_{\mathbb{Q}})$.
  Let $\kappa > 0$ be an exponent such that $\lambda_{\pi}$ is Siegel-Walfisz of level $\kappa$.
  Let $\mathcal{Q}_{\delta, \nu} \subset [Q/16, 16Q]$ be the set of integers that can be written as $p_1 p_2 m$ with
  $p_1 \sim P_1 := (\log Q)^{\kappa \nu}$, $p_2 \sim P_2 := (\log Q)^{10000}$ and $m \sim Q / (P_1 P_2)$ square-free with at most $\delta \log\log Q + 10$
  distinct prime factors, all larger than $(\log Q)^{20000}$. Then, there exists a choice of $0 < \delta < \nu < 1$
  with $\kappa \nu < 1000$ such that,
  \begin{align*}
  \sum_{\substack{q \in \mathcal{Q}_{\delta, \nu} \\ \chi \pmod{q} \\ \text{primitive}}} & \chi(\pm 1) L(\tfrac 12, \pi \otimes \chi) \\ & =
  Q^2 (\log Q)^{-1 + \delta + \delta \log \frac{1}{\delta}} \cdot \Big ( \mathbf{1}_{\pm 1 = 1} \cdot (\log Q)^{o(1)} + O \Big ( (\log Q)^{-1000 \delta} \Big ) \Big ) 
  \end{align*}
  as $Q \rightarrow \infty$.
\end{theorem}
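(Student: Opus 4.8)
The plan is to carry out an unamplified first--moment computation in the spirit of Luo, the decisive new ingredient being the average over the structured family $\mathcal{Q}_{\delta,\nu}$. I begin with a symmetric approximate functional equation. For $\chi$ primitive modulo $q$ with $(q,N_\pi)=1$ the twist $\pi\otimes\chi$ has conductor $N_\pi q^4$ and a root number of the shape $\varepsilon(\pi\otimes\chi)=\varepsilon_\infty(\chi)\,W(\pi)\,\chi(N_\pi)\,\tau(\chi)^4q^{-2}$, so that
\[
L(\tfrac12,\pi\otimes\chi)=\sum_{n\geq1}\frac{\lambda_\pi(n)\chi(n)}{\sqrt n}\,V_1\!\Big(\frac{n}{q^2}\Big)+\varepsilon(\pi\otimes\chi)\sum_{n\geq1}\frac{\overline{\lambda_\pi(n)}\,\overline{\chi}(n)}{\sqrt n}\,V_2\!\Big(\frac{n}{q^2}\Big),
\]
with both sums of essential length $q^2$. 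The equality of the two lengths is exactly what makes $\sigma=\tfrac12$ harder than any $\sigma\neq\tfrac12$ (where one side is shorter, as used by Luo): the two sums are exchanged by the functional equation and must be treated simultaneously. I then multiply by $\chi(\pm1)$, sum over primitive $\chi\pmod q$ and over $q\in\mathcal{Q}_{\delta,\nu}$, and apply orthogonality of primitive characters to the first sum and, after opening the Gauss sums $\tau(\chi)^4$, to the second. The first sum collapses onto the progressions $n\equiv\pm1\pmod q$ (plus lower-order cross terms from imprimitive divisors of $q$), while the second yields, for each $n$, a complete hyper-Kloosterman sum modulo $q$ paired against $\overline{\lambda_\pi(n)}$.

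\emph{Main term and the size of $\mathcal{Q}_{\delta,\nu}$.} The main term is the $n=1$ contribution to the first sum in the case $\chi(+1)$, namely $\sum_{q\in\mathcal{Q}_{\delta,\nu}}\phi^*(q)\,V_1(1/q^2)$, with $\phi^*(q)$ the number of primitive characters modulo $q$; since $q$ is squarefree and all its prime factors exceed $(\log Q)^{\kappa\nu}$ one has $\phi^*(q)=q(1+o(1))$ and $V_1(1/q^2)=1+o(1)$, so this equals $(1+o(1))\sum_{q\in\mathcal{Q}_{\delta,\nu}}q$. I would evaluate $\#\mathcal{Q}_{\delta,\nu}$ by Landau-type asymptotics for squarefree integers with a prescribed number of prime factors, $\#\{m\sim y:\ \omega(m)=k,\ p\mid m\Rightarrow p>(\log Q)^{20000}\}\asymp\frac{y}{\log y}\frac{(\log\log y)^{k-1}}{(k-1)!}$: summing over $k\leq\delta\log\log Q+10$ one is taking the partial sum of $e^{\log\log y}$ truncated well below its mean, which is governed by its terms near $k=\delta\log\log Q$ and equals $\frac{y}{\log y}(\log Q)^{\delta+\delta\log(1/\delta)+o(1)}$; multiplying by the number of admissible pairs $(p_1,p_2)$ and by $q\asymp Q$ produces the claimed $Q^2(\log Q)^{-1+\delta+\delta\log(1/\delta)+o(1)}$, the stray $(\log\log Q)$--powers and constants being what the $(\log Q)^{o(1)}$ absorbs. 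In the case $\chi(-1)$ the same $n=1$ term instead carries $\sum_{\chi\ \mathrm{prim}\,(q)}\chi(-1)=\mu(q)=O(1)$, because every $q\in\mathcal{Q}_{\delta,\nu}$ is odd and squarefree; hence no main term survives and the whole expression is part of the error, which is exactly the dichotomy recorded by $\mathbf{1}_{\pm1=1}$. Forming the sum and the difference of the two cases $\chi(\pm1)$ — and noting that every $q\in\mathcal{Q}_{\delta,\nu}$ is automatically coprime to $f$ once $Q$ is large — then yields Theorem~\ref{thm:nonvanishing} for both parities.

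\emph{The error term: the main obstacle.} It remains to show that the following are smaller than the main term by a factor $(\log Q)^{-1000\delta}$: (i) the off-diagonal $n\equiv\pm1\pmod q$, $n>1$, of the first sum, and (ii) the full dual sum. The trivial bound for (i), using the mean-square (Rankin--Selberg) estimate for $|\lambda_\pi|$, is of the \emph{same} order as the main term up to a power of $\log Q$, so genuine cancellation is essential — and, as noted, (i) and (ii) cannot be separated. Here the factorization $q=p_1p_2m$ is used decisively. Writing the off-diagonal as $\sum_{q\in\mathcal{Q}_{\delta,\nu}}\phi^*(q)\sum_{r\geq1}\lambda_\pi(qr\pm1)(qr)^{-1/2}V_1(r/q)$, every argument $qr\pm1$ lies in the fixed residue class $\pm1\pmod{p_1}$; since $p_1\sim(\log Q)^{\kappa\nu}\leq(\log Q)^{\kappa}$, the Siegel--Walfisz property of $\lambda_\pi$ (Brumley) supplies equidistribution of $\lambda_\pi$ along residues modulo $p_1$. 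To convert this into cancellation on the sparse subsequence $\{qr\pm1\}$, I would detect the congruence $n\equiv\pm1\pmod q$ by additive characters and apply $\mathrm{GL}_4$ Voronoi summation in $n$, rewriting each term as a dual sum against hyper-Kloosterman sums $\mathrm{Kl}_3(\,\cdot\,;q)$; one then splits these modulo $q=p_1p_2m$ by the Chinese remainder theorem, bounds each prime factor by Deligne, and extracts the remaining saving from the averages over $m$ and over the prime $p_2\sim(\log Q)^{10000}$ (a large-sieve type input). The same package — $\mathrm{GL}_4$ Voronoi, Deligne's bound made multiplicative through the known factorization of $q$, the small modulus $p_1$ for Siegel--Walfisz, and the averages over $m$ and $p_2$ — governs (ii). The restriction that $m$ be squarefree with at most $\delta\log\log Q+10$ prime factors, all large, keeps the unavoidable divisor losses (powers of $2^{\omega(q)}$ and $d(q)$) down to $(\log Q)^{o(1)}$ rather than a fixed positive power of $\log Q$. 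I expect the hard part to be precisely this final balancing: arranging the parameters $0<\delta<\nu<1$, $\kappa\nu<1000$, together with the sizes of $p_1,p_2$ and the bound on $\omega(m)$, so that the Siegel--Walfisz saving from $p_1$, the square-root cancellation in the Kloosterman sums, the loss incurred in Voronoi, and the divisor losses combine to a net saving beyond the (already thin) main term — and in checking that some admissible $(\delta,\nu)$ actually achieves it.
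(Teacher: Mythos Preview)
Your high-level framework (approximate functional equation, main term from $n=1$, Selberg--Delange count of $\mathcal{Q}_{\delta,\nu}$, and the parity dichotomy) matches the paper, but the mechanism you propose for the error terms has a genuine gap. The paper uses \emph{no} $\mathrm{GL}_4$ Voronoi summation. Detecting $n\equiv\pm1\pmod q$ by additive characters and applying Voronoi to a sum of length $N\asymp q^2$ produces a dual sum of length $q^4/N\asymp q^2$ against $\lambda_{\tilde\pi}$ weighted by hyper-Kloosterman sums; this is nothing but the functional equation again, so your treatment of (i) simply hands you back (ii), and declaring that ``the same package'' treats (ii) closes a circle rather than breaking it. Relatedly, your reading of the role of $p_1$ is off: the integers $qr\pm1$ all lie in a \emph{single} residue class modulo $p_1$, so equidistribution of $\lambda_\pi$ in residues modulo $p_1$ yields nothing there.

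What the paper actually does is the following. The approximate functional equation is taken \emph{asymmetric}, with the direct sum lengthened to $N_\nu=Q^2(\log Q)^{1-200\nu}$ and the dual sum shortened to $M_\nu=Q^2(\log Q)^{-1+200\nu}$. The dual sum is handled (Proposition~\ref{prop:second}) by Cauchy--Schwarz in $m$ to drop $\lambda_{\tilde\pi}$ via Rankin--Selberg, then Poisson summation over $m$; the resulting complete sums are correlations of the transforms $\mathcal{E}_\pi(\,\cdot\,;rs)$, which factor over $q=rs$ and are bounded by Deligne, giving a net saving of $R^{-1/4}$ from the small factor $r$ (taken to be $p_1$ or $p_1p_2$ according to the range of $M$). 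The direct sum in the critical range $N\approx Q^2$ is handled (Proposition~\ref{prop:first}) by first extracting a prime factor $p\in[\exp((\log N)^\nu),N^{1/1000}]$ from $n$ to create a genuine bilinear form $\lambda_\pi(p)\alpha(m)$, and then by a dispersion estimate in the style of Fouvry--Radziwi{\l\l}: Cauchy--Schwarz over $m$, expand, Poisson, and bound the off-diagonal via the Duke--Friedlander--Iwaniec estimate for Kloosterman fractions $e(\ell\overline{v_2}/v_1)$. Siegel--Walfisz enters only for the \emph{diagonal} $\ell=0$ after Poisson, applied to $\sum_{p\sim P,\ p\equiv a\,(d)}\lambda_\pi(p)p^{it}$ with $d\leq(\log P)^\kappa$; the purpose of the tiny prime $p_1\sim(\log Q)^{\kappa\nu}$ in the modulus is precisely to force the common divisor $d=(d_1,d_2)$ arising in dispersion to stay in this range. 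The final parameter balancing is as you anticipated, but it rests on these two distinct engines --- Cauchy--Schwarz\,+\,Poisson\,+\,Deligne on the dual side, bilinear dispersion\,+\,DFI on the direct side --- neither of which is Voronoi.
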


The $(\log Q)^{o(1)}$ can be replaced by a more precise $(\log\log Q)^{-3/2 - \delta + o(1)}$ if needed. 
It is also possible to specify an admissible $\kappa$, $\delta$ and $\nu$ completely explicitly. We find that doing so makes the proof harder to parse. The theorem gives immediately the infinitude of even or odd characters $\chi$ with $L(\tfrac 12, \pi \otimes \chi) \neq 0$, using the observation that,
$$
\mathbf{1}_{\chi(-1) = \pm 1} = \frac{1 \pm \chi(-1)}{2}.
$$
Moreover for any given $f \geq 1$ once $Q$ is choosen large enough all the exhibited characters have modulus coprime to $f$.

A nearly immediate consequence of the above Theorem and the large sieve is a quantitative result asserting that for any given $f \geq 1$ there are at least
$$
\frac{Q^2}{\log^4 Q}
$$
even (resp. odd) primitive characters $\chi \pmod{q}$ with $q \in [Q/16, 16 Q]$ and $(q,f) = 1$ for which $L(\tfrac 12, \pi \otimes \chi) \neq 0$.
We record two final observations. We can require the set of moduli to be composed of only three primes under additional assumptions on the form $\pi$, e.g forms of cohomological type or forms arising from lifting of $\mathrm{GL}_{2}(\mathbb{A}_{\mathbb{Q}})$ forms, for example $\Sym^3 \phi$ with $\phi$ being a cusp form on $\mathrm{GL}_2(\mathbb{A}_{\mathbb{Q}}).$ We discuss some of these modifications in the remarks in the next section. While we haven't checked all the details it seems plausible that Theorem \ref{thm:main} holds in general with a set of moduli $q$ composed of $\leq C$ distinct prime factors with $C$ an absolute constant. We have not considered possible extensions of our result to the case of number fields and plan to address this in later papers. 

\subsection{Notation}
Throughout all constants in $O(\cdot)$ are allowed to depend on the form $\pi$ since throughout the paper the form $\pi$ is fixed. We wil from time to time mention this dependence, with the purpose of bringing to the reader's attention that uniformity in $\pi$ is not considered in our bounds. We will write $a \sim A$ to mean $a \in [A, 2A)$. We will also write $a \asymp A$ to mean that there exists an absolute constant $C$ such that $a \in [A / C, C A]$. For example $C = 10^9$ is an admissible choice that works throughout the paper. 
  \subsection{Acknowledgments}
  We would like to thank Henri Darmon, Wenzhi Luo and Chris Skinner for discussions related to this paper. 
  The first author acknowledges support of NSF grant DMS-1902063.
  
\section{Organization of the paper and deduction of Theorem \ref{thm:main}}

Theorem \ref{thm:main} follows from three Propositions whose proof will occupy us for the rest of the paper. In this section we state these three Propositions and we deduce Theorem \ref{thm:main} from them.

The first Proposition is a variant of Luo's work \cite{Luo05}. Similar ideas are also used in the paper of Blomer-Milicevic \cite{BM15}. The main ingredient in the Proposition below is Deligne's bound for certain exponential sums.

\begin{proposition}\label{prop:second}
  Let $\pi$ be a unitary cuspidal automorphic representation of $\mathrm{GL}_{4}(\mathbb{A}_{\mathbb{Q}})$ of conductor $N_{\pi}$.
  Let $\delta \geq 0$ be given.
  Let $\mathcal{R} \subset [R / 4, 4 R]$ and $\mathcal{S} \subset [S / 4, 4 S]$ with $R \leq S$ be
  two sets of of integers such that for all $r \in \mathcal{R}$ and $s \in \mathcal{S}$,
  \begin{enumerate}
  \item we have $(r s, N_{\pi}) = 1$ and $(r, s) = 1$
  \item both $r$ and $s$ are square-free,
  \item we have $\omega(r) \leq 10$ and $\omega(s) \leq \delta \log\log S + 10$,
  \end{enumerate}
  Let $\mathcal{Q}$ be the set of all integers that can be written as $r s$ with $r \in \mathcal{R}$ and
  $s \in \mathcal{S}$. Let $V$ be a smooth function compactly supported in $[1/100, 100]$.
  Finally set $Q := R S$ and assume that $S > R^{10}$. 
  Then, for any $M \geq 1$, 
  \begin{align*}
  \sum_{q \in \mathcal{Q}} & \sum_{\substack{\chi \pmod{q} \\ \text{primitive}}} \varepsilon(\pi, \chi) \chi(\pm 1) \sum_{m} \frac{\lambda_{\tilde{\pi}}(m)\overline{\chi}(m)}{\sqrt{m}} V \Big ( \frac{m}{M} \Big ) \\ &  \ll \| V \|_{\infty, 2} \cdot \Big (\sqrt{R M} Q (\log Q)^{-\frac{1}{2} + \frac{\delta}{2} + \frac{\delta}{2} \log \frac{1}{\delta}} +  Q^2 (\log Q)^{-1 + \delta + \delta \log \frac{1}{\delta}} \cdot R^{-1/4} (\log Q)^{C \delta} \Big ) 
  \end{align*}
  with $C > 10$ an absolute constant and
  where $\varepsilon(\pi , \chi)$ is the root number of $L(s, \pi \otimes \chi)$ and where
  finally
  $$
  \| V \|_{\infty, 2} := \| V \|_{\infty} + \| V' \|_{\infty} + \| V'' \|_{\infty}.
  $$
\end{proposition}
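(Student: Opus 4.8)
The sum in question is a "twisted first moment" of coefficients of $L(s,\tilde\pi \otimes\chi)$ averaged over primitive $\chi$ modulo $q \in \mathcal{Q}$, weighted by the root number $\varepsilon(\pi,\chi)$. The starting point is to open up the root number. For a primitive character $\chi$ modulo $q$, the root number $\varepsilon(\pi,\chi)$ factors as $\varepsilon(\pi)\cdot \chi(\text{something})\cdot \tau(\chi)^4/q^2$ (up to archimedean factors depending only on the parity of $\chi$, which are constants here since $\pi$ is fixed), where $\tau(\chi)$ is the Gauss sum. So after expanding $\tau(\chi)^4$ as a $4$-fold sum over residues mod $q$ and using the factorization $q = rs$ together with the CRT to split $\chi = \chi_r \chi_s$, the whole expression becomes a sum over $r\in\mathcal R$, $s\in\mathcal S$, over $m$, and over the Gauss-sum variables, with an inner character sum $\sum_{\chi} \chi(\cdots)$ that by orthogonality collapses to a congruence condition modulo $rs$. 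The key structural point is that after this manipulation the variable $s$ (the "long" modulus) appears only through short complete exponential sums, to which Deligne's bound (the Weil/Deligne bound for the relevant hyper-Kloosterman-type sums coming from $\tau(\chi_s)^4$) applies and gives square-root cancellation.

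Next, after inserting Deligne's bound for the $s$-sum, one is left with a sum over $r \in \mathcal R$, over $m \lesssim M$, and over $s \in \mathcal S$ of a main diagonal term plus an error. The diagonal term (where the congruence forces an exact equality rather than a genuine congruence class) contributes the first term $\sqrt{RM}\,Q(\log Q)^{-1/2+\delta/2+(\delta/2)\log(1/\delta)}$: the power of $\log Q$ here is exactly the expected count — by a Landau–Sathe–Selberg / Selberg–Delange style estimate — of integers $s\sim S$ that are square-free with $\le \delta\log\log S + 10$ prime factors all exceeding $(\log Q)^{20000}$; the exponent $-1/2 + \delta/2 + (\delta/2)\log(1/\delta)$ is (half of) $-1 + \delta + \delta\log(1/\delta)$ because the Gauss-sum expansion produces $\tau(\chi_s)^4/s^2$ and a square-root-type saving, so one effectively sees $|\mathcal S|^{1/2}$ rather than $|\mathcal S|$. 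The off-diagonal term, where Deligne's bound genuinely saves $\sqrt{s}$, yields the second term: one has $Q^2$ from trivial bounds on the remaining sums, the same $(\log Q)^{-1+\delta+\delta\log(1/\delta)}$ from counting admissible $s$, and an extra genuine saving of $R^{-1/4}$ (with a harmless $(\log Q)^{C\delta}$ loss from divisor-type bounds on $\tau(\chi_r)^4$ and from summing over $\omega(r)\le 10$ and over the Gauss variables attached to $r$). The norm $\|V\|_{\infty,2}$ enters through integration by parts / Mellin analysis when truncating the $m$-sum and controlling archimedean oscillation; two derivatives suffice because $M$ is at most polynomial in $Q$.

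More concretely, the order of operations I would follow is: (i) write $\varepsilon(\pi,\chi) = \varepsilon_\infty(\chi(-1))\,\varepsilon(\pi)\,\overline{\chi}(N_\pi)\,\tau(\chi)^4 q^{-2}$ and absorb the parity factor $\varepsilon_\infty(\chi(-1))\chi(\pm1)$ — note $\chi(\pm 1)$ is there precisely to make this depend only on $\chi$ through a fixed sign, so it becomes a constant; (ii) use $\tau(\chi_r\chi_s) = \chi_r(s)\chi_s(r)\tau(\chi_r)\tau(\chi_s)$ to separate the $r$ and $s$ dependence; (iii) expand $\tau(\chi_s)^4 = \sum_{a_1,a_2,a_3,a_4 \bmod s} \chi_s(a_1a_2a_3a_4) e_s(a_1+a_2+a_3+a_4)$ and similarly for $r$; (iv) detect primitivity of $\chi$ by Möbius over the conductor, so that $\sum_{\chi_s \bmod s}$ (with primitivity) becomes, after orthogonality, a congruence $m\, a_1 a_2 a_3 a_4 \equiv r^4 N_\pi^{-1} \pmod{s}$ (roughly), times $\varphi(s)$ or a divisor-correction of it; (v) solve this congruence: the main term comes from when $m a_1a_2a_3a_4 = r^4 N_\pi^{-1}\cdot(\text{small})$, i.e. the "diagonal", while the complementary sum, after summing the additive character $e_s(a_1+\cdots+a_4)$ over the hyperplane cut out by the congruence, is a Kloosterman-type complete sum in $s$ of dimension $\le 3$, bounded by $O_\varepsilon(s^{3/2+\varepsilon})$ via Deligne — this is where the crucial $\sqrt{s}$-saving over the trivial bound $s^2$ comes from, giving $s^{-1/2}$ after normalizing by $s^2$, hence $S^{-1/2} \asymp (RS)^{-1/2}R^{1/2} \asymp Q^{-1/2}R^{1/2}$, which combined with the other factors produces $R^{-1/4}$ after balancing against the diagonal/using $S > R^{10}$; (vi) sum trivially over $r\in\mathcal R$, over the bounded number of Gauss variables $a_i$, and over $m$ with the weight $V(m/M)$, using Selberg–Delange to count admissible $s$, and collect the two terms.

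**Main obstacle.** The genuinely delicate point is step (v): one must identify exactly which part of the congruence sum is "diagonal" (no cancellation, contributing the first, main, term) and which part admits Deligne's bound, and then verify that the exponential sum appearing in the latter is of the correct dimension and is not degenerate — i.e. that one is really in a situation where Deligne's theorem (rather than a weaker Weil bound with a worse exponent, or no bound at all for a non-generic sum) applies. Handling the non-squarefree parts of the Gauss-variable products, the gcd conditions forced by $(r,s)=1$ and $(rs,N_\pi)=1$, and the Möbius sieving for primitivity without losing more than $(\log Q)^{C\delta}$ — all of this is where the real bookkeeping lies. The archimedean/analytic side (truncating the $m$-sum, the role of $\|V\|_{\infty,2}$) and the combinatorial count of admissible $s$ via Selberg–Delange are comparatively routine, though the precise exponent $-1+\delta+\delta\log(1/\delta)$ must be tracked carefully through both.
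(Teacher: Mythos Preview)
Your proposal has a genuine gap: you never exploit cancellation in the $m$-sum, and without that the bound is too weak by a factor of roughly $\sqrt{M/Q}$, which is $\sim \sqrt{Q}$ in the critical range $M \asymp Q^2$.

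Concretely, steps (i)--(v) of your plan amount to bounding, for each fixed $m$, the inner sum $\sum_{\chi \bmod q}^{\mathrm{prim}} \varepsilon(\pi,\chi)\chi(\pm 1)\overline{\chi}(m)$ by $\sqrt{q}\,C^{\omega(q)}$ via Deligne (this is exactly the paper's pointwise bound $|\mathcal{E}_\pi(m;q)| \ll \sqrt{q}\,C^{\omega(q)}$). Then in step (vi) you ``sum trivially \ldots\ over $m$ with the weight $V(m/M)$''. But the weight on $m$ is $\lambda_{\tilde\pi}(m)/\sqrt{m}$, not just $V(m/M)$; summing this in absolute value costs $\sqrt{M}$, and together with $\sum_{r,s}\sqrt{rs}\,C^{\omega(rs)}$ you obtain at best $Q^{3/2}\sqrt{M}\,(\log Q)^{C\delta}$, far from the target $Q^2 R^{-1/4}(\log Q)^{-1+\delta+\delta\log(1/\delta)+C\delta}$.

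What the paper does instead is: after writing the sum as $\sum_m \lambda_{\tilde\pi}(m) m^{-1/2} V(m/M) \sum_{r,s}\mathcal{E}_\pi(\pm m; rs)$, it pulls $\sum_r \sqrt{r}$ outside, applies \emph{Cauchy--Schwarz in $m$} (using Rankin--Selberg to control $\sum_m |\lambda_{\tilde\pi}(m)|^2/m$), expands the square to a bilinear sum over $s_1,s_2\in\mathcal S$, and then applies \emph{Poisson summation in $m$} to the resulting smooth sum. Deligne is applied on the dual side, to the complete sums $\mathcal{K}_4(v_1,v_2,\ell;r,s_1,s_2)$ in the $\ell\neq 0$ terms. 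The $\ell=0$ term is the true ``diagonal'' and forces $s_1=s_2$.

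This structure explains the shape of the bound, which your proposal does not. The half-exponent $-\tfrac12+\tfrac\delta2+\tfrac\delta2\log\tfrac1\delta$ in the first term and the $R^{-1/4}$ in the second are both artifacts of the Cauchy--Schwarz square root: inside the square one gets $MS\mathcal{L}_\delta$ (diagonal) and $(\log Q)^{C\delta}\sqrt{R}\,S(S\mathcal{L}_\delta)^2$ (off-diagonal), and after taking square roots and multiplying by $R^{3/2}$ one obtains $\sqrt{RM}\,Q\,\mathcal{L}_\delta^{1/2}$ and $Q^2 R^{-1/4}\mathcal{L}_\delta(\log Q)^{C\delta}$ respectively. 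Your explanation that $R^{-1/4}$ arises from ``balancing against the diagonal/using $S>R^{10}$'' has no content; the condition $S>R^{10}$ is used only to ensure $\log S \asymp \log Q$ so that the Selberg--Delange count in $S$ gives the stated powers of $\log Q$.
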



Notice that $Q^2 (\log Q)^{-1 + \delta + \delta \log \frac{1}{\delta} + o(1)}$ is the size of the main term. The term $(\log Q)^{C \delta}$ is the loss from Deligne's bound. The first term in the bound in Proposition \ref{prop:second} corresponds to the diagonal while the second term corresponds to the off-diagonal. Loosing a factor of $(\log Q)^{C \delta}$ in the diagonal would not make a difference to our argument. We in fact carry out the proof of the Theorem with this additional loss to make this point. 

The second Proposition relies on a variant of a recent dispersion estimate of Fouvry-Radziwi{\l\l} \cite{FouvryRadzi}.
Results of this type go back to the work of Green \cite{Green} and use the bounds of Duke-Friedlander-Iwaniec \cite{DFI97} as their main arithmetic input. We refer the reader to \cite{GranvilleShao} for another variation on the work of Green and to \cite{BC18} for a recent improvements of the bounds of Duke-Friedlander-Iwaniec. 

\begin{proposition}\label{prop:first}

  Let $\pi$ be a unitary cuspidal automorphic representation of $\mathrm{GL}_{4}(\mathbb{A}_{\mathbb{Q}})$.
  Let $0 < \kappa < 1$ be such that $\lambda_{\pi}$ is Siegel-Walfisz of level $\kappa$.
  Let $0 < \nu, \delta < 1/1000$ be given.
  Let $\mathcal{Q}_{\delta, \nu} \subset [Q / 16, 16 Q]$ be
  the set of square-free integers that can be written as $p_1 p_2 m$ with
  $p_1 \sim P_1 := (\log Q)^{\kappa \nu}$,  $p_2 \sim P_2 := (\log Q)^{10000}$ and $m \sim Q / (P_1 P_2)$ having less than $\delta \log\log Q + 10$ prime factors, all larger than $(\log Q)^{20000}$.
  Let $V$ be smooth and compactly supported in $[1/100, 100]$. Then, for any $N$ in the range,
  $$
  Q^2 (\log Q)^{-10^9} \leq N \leq Q^2 (\log Q)^{10^9}
  $$
  we have, for any $A > 10$, 
\begin{align*}
  \sum_{q \in \mathcal{Q}} & \sum_{\substack{\chi \pmod{q} \\ \text{primitive}}} \chi(\pm 1) \sum_{n} \frac{\lambda_{\pi}(n)\chi(n)}{\sqrt{n}} V \Big ( \frac{n}{N} \Big ) \\ & \ll \| V \|_{\infty, 2} \cdot Q \sqrt{N} \cdot \Big ( \frac{e^{C / \delta}}{(\log Q)^{3/2}} \cdot (\log Q)^{4 \nu + 10 \delta + \delta \log \frac{1}{\delta}} + C(A, \nu, \kappa) (\log Q)^{-A} \Big ) 
\end{align*}
  with $C(A, \nu, \kappa)$ a constant depending only on $A$, $\nu$ and $\kappa$.  
\end{proposition}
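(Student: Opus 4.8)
## Proof proposal for Proposition \ref{prop:first}

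The plan is to treat this as a dispersion-method estimate in the spirit of Fouvry--Radziwi\l\l. First I would open the sum over primitive characters: writing the sum over primitive $\chi \bmod q$ in terms of all characters via M\"obius inversion over the conductor, the character sum $\sum_{\chi \bmod q} \chi(\pm 1)\chi(n)$ detects $n \equiv \pm 1 \bmod q$. After this step the left-hand side becomes, up to the usual divisor-bounded error terms coming from imprimitive contributions, a sum of the shape
\begin{align*}
\sum_{q \in \mathcal{Q}_{\delta,\nu}} \ \varphi^{\star}(q) \sum_{\substack{n \equiv \pm 1 \,(q)}} \frac{\lambda_{\pi}(n)}{\sqrt n} V\!\Big(\frac nN\Big),
\end{align*}
where $\varphi^\star(q)$ is the number of primitive characters, and one must show cancellation. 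The key point is that $N \asymp Q^2$ is \emph{larger} than $Q$, so each congruence class mod $q$ is genuinely populated; this is exactly the regime where one exploits the factorization $q = p_1 p_2 m$ and the fact that $\mathcal{Q}_{\delta,\nu}$ ranges over an interval, rather than over a single modulus.

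The main engine is the dispersion argument: I would fix the small prime $p_1 \sim P_1 = (\log Q)^{\kappa\nu}$, and use the Siegel--Walfisz property of $\lambda_\pi$ of level $\kappa$ precisely on this variable --- note $P_1 \le (\log Q)^\kappa$ by the constraint $\nu<1$, which is why the hypothesis is stated with the level $\kappa$. For the remaining variables $p_2 m$ (which range over a long interval of length $\asymp Q/P_1$) one squares out and expands, bounding the diagonal trivially and estimating the off-diagonal via the Duke--Friedlander--Iwaniec bounds for the relevant incomplete exponential/Kloosterman-type sums, exactly as in the variant of the Fouvry--Radziwi\l\l\ dispersion estimate referenced before the Proposition. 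The arithmetic factors coming from the number of representations $q = p_1 p_2 m$ with $m$ having at most $\delta\log\log Q + 10$ prime factors produce the combinatorial weight: the count of such $m$ contributes a factor of size $(\log Q)^{-1+\delta+\delta\log\frac1\delta+o(1)}$ relative to all of $[Q/16,16Q]$ by a Sathe--Selberg / Landau count for integers with a restricted number of prime factors, and the restriction that all prime factors of $m$ exceed $(\log Q)^{20000}$ costs only a constant and keeps $m$ ``rough'' so that sieve manipulations are clean. Tracking the powers of $\log Q$: the main saving $(\log Q)^{-3/2}$ is the square-root of the $(\log Q)^{-1+\ldots}$ density times the $L^2$-normalization $Q\sqrt N$, the $e^{C/\delta}$ absorbs the implied constants in the count of $\omega(m)$-restricted integers (these blow up as $\delta \to 0$), the $(\log Q)^{4\nu}$ is the loss from summing trivially over $p_1 \sim P_1$ when the Siegel--Walfisz estimate is not quite enough on its own, and $(\log Q)^{10\delta+\delta\log\frac1\delta}$ collects the various $2^{\omega(m)}$-type divisor losses. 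Finally, the term $C(A,\nu,\kappa)(\log Q)^{-A}$ is exactly the Siegel--Walfisz error, pushed to be smaller than any power of $\log Q$ at the cost of an ineffective (or Brumley-effective) constant.

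I would organize the write-up as: (i) reduce to the congruence sum by character orthogonality and dispose of the imprimitive terms; (ii) separate the variable $p_1$ and apply Siegel--Walfisz there; (iii) set up the dispersion/Cauchy--Schwarz step in the $p_2 m$ variables and invoke the DFI-type bound; (iv) carry out the combinatorial count of $q = p_1p_2 m \in \mathcal{Q}_{\delta,\nu}$ to extract the $(\log Q)^{-1+\delta+\delta\log\frac1\delta+o(1)}$ density factor; (v) assemble the error terms. The hardest step will be (iii): making the dispersion estimate work uniformly over the long interval of moduli while the modulus $q$ itself carries the delicate combinatorial structure (square-free, prescribed number of prime factors, all large) --- one has to be careful that the coprimality conditions $(p_1, p_2 m) = 1$ etc.\ interact correctly with the reciprocity/Kloosterman manipulations, and that the smoothness of $V$ (hence the $\|V\|_{\infty,2}$ on the right) is used to truncate the dual sums at the right length. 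A secondary subtlety is keeping the dependence on $\delta$ in the honest shape $e^{C/\delta}$ rather than something worse; this requires using the upper-bound sieve (or a direct Rankin-type bound) for the number of rough integers with a bounded number of prime factors with the correct constant.
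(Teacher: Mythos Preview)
Your proposal has the right toolkit (character orthogonality, dispersion, DFI bounds, Siegel--Walfisz) but the architecture is inverted, and this is a genuine gap. The paper's proof does not use a bilinear structure in the \emph{modulus} $q=p_1p_2m$; it uses a bilinear structure in the \emph{variable} $n$. Concretely, the proof goes through two intermediate results: first (Proposition~\ref{prop:reduction}, relying on Proposition~\ref{prop:exceptional}) one splits the integers $n$ according to whether or not $n$ has a prime factor $p\in\mathcal{P}_\nu(N):=[\exp(\log^\nu N),N^{1/1000}]$. The exceptional set $\mathcal{E}$ of $n$ \emph{without} such a prime factor---essentially primes and near-primes---is handled directly by Cauchy--Schwarz and a sieve bound, and it is this step, not the dispersion, that produces the dominant error $e^{C/\delta}(\log Q)^{-3/2}(\log Q)^{4\nu+10\delta+\delta\log\frac1\delta}$. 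The $(\log Q)^{4\nu}$ comes from summing over the smooth part of $n$ of size up to $\exp(\log^{4\nu}N)$, and the $e^{C/\delta}$ from a binomial bound counting divisors of $ab\mp1$ with $\le\delta\log\log Q$ prime factors; neither has anything to do with $p_1$.

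For $n\notin\mathcal{E}$ one writes $n=pm$ with $p\in\mathcal{P}_\nu(N)$ and applies dispersion (Proposition~\ref{prop:dispersion2}): Cauchy--Schwarz in $m$, open the square over the moduli $d_1,d_2$, Poisson in $m$, and bound the resulting Kloosterman fractions by DFI. Siegel--Walfisz is applied to $\sum_{p\equiv a\,(d)}\lambda_\pi(p)p^{it}$ with $p$ a prime factor of $n$ and $d=(d_1,d_2)$ a common factor of two moduli. Here is where the structure of $q$ enters: because $q$ has exactly one prime factor $p_1\le(\log Q)^{\kappa\nu}\le(\log P)^\kappa$ and all others exceed $z=(\log Q)^{10000}$, the gcd $d$ is either small enough for Siegel--Walfisz or large enough to be discarded trivially. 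So $p_1$ is not the variable to which Siegel--Walfisz is applied; it is the device that controls the gcd in the off-diagonal. Your step (ii) ``separate the variable $p_1$ and apply Siegel--Walfisz there'' does not make sense as written, and without the decomposition of $n$ you have no mechanism for the hardest case where $n$ itself is prime.
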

\begin{remark}
This bound can be improved by a small power of $\log x$ if we assume that there exists a $\eta > 0$ such that,
\begin{equation} \label{eq:asumpt}
\sum_{p \leq x} |\lambda_{\pi}(p)|^{2 + \eta} \ll \frac{x}{\log x}.
\end{equation}
The bound \eqref{eq:asumpt} is true for $\pi$ of cohomological type or $\pi$ arising from certain functorial lifting of $\mathrm{GL}_2(\mathbb{A}_{\mathbb{Q}})$-cuspidal representations, namely, for  $0<\eta<2/3,$
 \begin{itemize}
 	\item $\pi=\Sym^3\sigma,$ where $\sigma$ is a $\mathrm{GL}_2(\mathbb{A}_{\mathbb{Q}})$-cuspidal representation which is not of tetrahedral or octahedral type. In this case we have $|\lambda_{\pi}(p)|^{2+\eta}\ll 1+|\lambda_{\Sym^4\sigma}(p)|^2$ and $\Sym^4\sigma$ is cuspidal (cf. \cite{KS02a} and \cite{KS02b}).
 	\item $\pi=\sigma_1\boxtimes\sigma_2,$ where $\sigma_1$ and $\sigma_2$ are $\mathrm{GL}_2(\mathbb{A}_{\mathbb{Q}})$-cuspidal representations which are not of dihedral type and $\Sym^2\sigma_1$ is not a twist of $\Sym^2\sigma_2.$ In this case we have $|\lambda_{\pi}(p)|^{2+\eta}\ll 1+|\lambda_{\Sym^2\sigma_1\boxtimes \sigma_2}(p)|^2+|\lambda_{\sigma_1\boxtimes \Sym^2\sigma_2}(p)|^2$ and $\Sym^2\sigma_1\boxtimes \sigma_2$ and $\sigma_1\boxtimes \Sym^2\sigma_2$ are cuspidal representations of $\mathrm{GL}_6(\mathbb{A}_{\mathbb{Q}})$ (cf. \cite{Ram00}, \cite{KS02b} and \cite{RW04}).
 \end{itemize}
 \end{remark}

The bound of Proposition \ref{prop:first} is essentially dominated by the contribution of primes which we don't know how to bound other than trivially.

The final Proposition is rather trivial.

\begin{proposition} \label{prop:third}
  Let $\pi$ be a unitary cuspidal automorphic representation of $GL_4(\mathbb{A}_{\mathbb{Q}})$.
  Let $V$ be a smooth function compactly supported in $[N/ 100, 100N]$.
  Let $\mathcal{Q} \subset [Q / 16, 16 Q]$ be a set of squarefree integers that can be written as $p_1 p_2 m$ with $p_1 \sim P_1$, $p_2 \sim P_2$ and $m \sim Q / (P_1 P_2)$ having at most $\delta \log\log Q + 10$ prime factors, all larger than $(\log Q)^{20000}$. We assume that $4 P_1 < 2 P_2 < (\log Q)^{20000}$.   
  Then,
  \begin{align*}
\sum_{\substack{q \in \mathcal{Q} \\ \chi \pmod{q} \\ \text{primitive}}} \chi(\pm 1) \sum_{n} \frac{\lambda_{\pi}(n) \chi(n)}{\sqrt{n}} V \Big ( \frac{n}{N} \Big ) = V \Big ( \frac{\pm 1}{N} \Big ) & Q^2 (\log Q)^{\delta - 1 + \delta \log \frac{1}{\delta} + o(1)} \\ & + O( Q \sqrt{N} (\log Q)^2 \| V \|_{\infty} )
\end{align*}
\end{proposition}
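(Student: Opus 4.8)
The plan is to evaluate the left‑hand side head‑on: open the sum over primitive characters by Möbius inversion, peel off the term $n=1$ as the main term, and bound everything else by a greatest‑common‑divisor estimate combined with the $\mathrm{GL}_4$ Rankin--Selberg bound $\sum_{n\le x}|\lambda_\pi(n)|^2\ll_\pi x$. First I would use, for $(n,q)=1$,
$$
\sum_{\substack{\chi\pmod{q}\\ \text{primitive}}}\chi(\pm n)=\sum_{d\mid q}\mu(q/d)\phi(d)\,\mathbf{1}_{n\equiv\pm 1\,(d)}=\sum_{d\mid\gcd(q,\,n\mp1)}\mu(q/d)\phi(d),
$$
and insert this into the left‑hand side, keeping in mind that the weight $V(n/N)$ confines $n$ to the range $n\asymp N$ and that the $n$‑sum is finite.

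In the ``$+$'' case the term $n=1$ contributes $\lambda_\pi(1)\,V(\pm1/N)\sum_{d\mid q}\mu(q/d)\phi(d)=V(\pm1/N)\prod_{p\mid q}(p-2)$, using $\lambda_\pi(1)=1$ and that $q$ is squarefree. Since every prime factor of $q\in\mathcal{Q}$ exceeds $(\log Q)^{\kappa\nu}$ and there are at most $\delta\log\log Q+12$ of them, $\prod_{p\mid q}(p-2)=q\prod_{p\mid q}(1-2/p)=q(1+o(1))$ uniformly in $q$, so the total $n=1$ contribution equals $V(\pm1/N)(1+o(1))\sum_{q\in\mathcal{Q}}q=V(\pm1/N)\cdot Q\,|\mathcal{Q}|\,(\log Q)^{o(1)}$. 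A routine computation gives $|\mathcal{Q}|=Q(\log Q)^{-1+\delta+\delta\log(1/\delta)+o(1)}$: summing over the $\asymp P_i/\log P_i$ primes $p_i\sim P_i$ reduces the count of $q=p_1p_2m$ to that of squarefree $m\sim Q/(P_1P_2)$ all of whose prime factors exceed $(\log Q)^{20000}$ and with $\omega(m)\le\delta\log\log Q+10$, and since $\log(Q/(P_1P_2))=(1+o(1))\log Q$ while $\log\log((\log Q)^{20000})=o(\log\log Q)$, the standard estimates for integers with few prime factors apply --- the upper bound from $\sum_{m\le x}z^{\omega(m)}\ll_z x(\log x)^{z-1}$ optimised at $z=\delta$, and the matching lower bound from the Sathé--Selberg asymptotic for $\#\{m\le x:\omega(m)=\lfloor\delta\log\log x\rfloor\}$ together with Stirling's formula, with neither squarefreeness nor the cut on prime factors affecting the exponent. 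Thus the main term is $V(\pm1/N)\cdot Q^2(\log Q)^{\delta-1+\delta\log(1/\delta)+o(1)}$. In the ``$-$'' case one has $V(\pm1/N)=V(-1/N)=0$ because $V$ is supported on the positive half‑line, and the $n=1$ term then contributes only $V(1/N)\sum_{q\in\mathcal{Q}}\mu(q)=O(|\mathcal{Q}|\,\|V\|_\infty)$, which is absorbed by the stated error.

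For the remaining $n$ (those with $n\ge2$ in the ``$+$'' case and all $n$ in the ``$-$'' case) I would use $\bigl|\sum_{d\mid\gcd(q,\,n\mp1)}\mu(q/d)\phi(d)\bigr|\le\sum_{d\mid\gcd(q,\,n\mp1)}\phi(d)=\gcd(q,\,n\mp1)$ together with $n^{-1/2}\asymp N^{-1/2}$ on the support, reducing matters to bounding
$$
\frac{\|V\|_\infty}{\sqrt N}\sum_{n\asymp N}|\lambda_\pi(n)|\sum_{q\in\mathcal{Q}}\gcd(q,\,n\mp1).
$$
Writing $\gcd(q,m)=\sum_{e\mid q,\,e\mid m}\phi(e)$, using that $e\mid q$ forces $e\le16Q$ while $\#\{q\in\mathcal{Q}:e\mid q\}\ll\max(Q/e,1)$, one gets $\sum_{q\in\mathcal{Q}}\gcd(q,m)\ll\sum_{e\mid m,\,e\le16Q}\phi(e)(Q/e+1)\ll Q\,\tau(m)$. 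Then Cauchy--Schwarz with $\sum_{n\le x}|\lambda_\pi(n)|^2\ll_\pi x$ and $\sum_{n\le x}\tau(n\mp1)^2\ll x(\log x)^3$ yields $\sum_{n\asymp N}|\lambda_\pi(n)|\,\tau(n\mp1)\ll_\pi N(\log N)^{3/2}$, so the displayed quantity is $\ll_\pi Q\sqrt N(\log N)^{3/2}\|V\|_\infty\ll Q\sqrt N(\log Q)^2\|V\|_\infty$ in the relevant range of $N$. Combining this with the previous paragraph gives the Proposition.

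I do not expect a genuine obstacle here; this is the ``rather trivial'' input of the paper. The only step deserving some care is the two‑sided count of $\mathcal{Q}$, whose lower bound rests on the Sathé--Selberg theorem for the distribution of $\omega(n)$, and one should note that the $\gcd$‑bound is exactly what makes the error acceptable: the cruder bound $\bigl|\sum_{\chi\,\text{prim}}\chi(\pm n)\bigr|\le\phi(q)$ would only produce an error of size roughly $Q^2\sqrt N\,\|V\|_\infty$, whereas the $\gcd$‑bound --- after interchanging the $q$‑ and $n$‑summations --- captures the cancellation among primitive characters and saves a full power of $Q$. The sole automorphic ingredient is $\sum_{n\le x}|\lambda_\pi(n)|^2\ll_\pi x$, used only through Cauchy--Schwarz.
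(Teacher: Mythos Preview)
Your proposal is correct and follows essentially the same route as the paper: the paper also opens the primitive-character sum via M\"obius as $\sum_{cd\in\mathcal{Q}}\mu(c)\varphi(d)\mathbf{1}_{n\equiv\pm1\,(d)}$, isolates the term $n=1$ as the main term, and bounds the remaining $n$ by the same divisor estimate $\sum_{d\mid n\mp1}1=\tau(n\mp1)$ combined with Cauchy--Schwarz and Rankin--Selberg (this is packaged as Lemma~\ref{le:largesieve}). The only cosmetic difference is in the main term: the paper evaluates $\sum_{cd\in\mathcal{Q}}\mu(c)\varphi(d)$ directly by the Selberg--Delange method, whereas you factor it as $(1+o(1))\sum_{q\in\mathcal{Q}}q$ and then count $|\mathcal{Q}|$ via Sath\'e--Selberg, which amounts to the same computation.
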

Notice that the main term above is zero unless $N \leq 100$ and $\textbf{1}_{\pm 1=1}$.

We are now ready to describe how Theorem \ref{thm:main} follows from these two Proposition.

\begin{proof}[Deduction of Theorem \ref{thm:main} from Proposition \ref{prop:first} and Proposition \ref{prop:second}]

  Let
  $$N_{\nu} = Q^2 (\log Q)^{1 - 200 \nu} \text{ and } M_{\nu} = Q^2 (\log Q)^{-1 + 200 \nu}.$$
Let also $N_{\nu, \varepsilon}^{+} := N_{\nu} (\log Q)^{\varepsilon}$ and
$M_{\nu, \varepsilon}^{+} := M_{\nu} (\log Q)^{\varepsilon}$.
By the approximate functional equation,
$$
L(\tfrac 12, \pi \otimes \chi) = \sum_{n} \frac{\lambda_{\pi}(n) \chi(n)}{\sqrt{n}} W_1 \Big ( \frac{n}{N_{\nu}} \Big ) + \varepsilon(\pi, \chi) \sum_{m} \frac{\lambda_{\tilde{\pi}}(m) \overline{\chi(m)}}{\sqrt{m}} W_2 \Big ( \frac{m}{M_{\nu}} \Big )
$$
with $W_1, W_2$ smooth functions such that $W_i(x) = 1 + O_{A}(x^{A})$ for $x < 1$ and any given $A > 10$, and $W_i(x) = O(x^{-A})$ for $x > 1$ and any given $A > 10$. We also introduce a partition of unity on the $n$ and $m$ sum, i.e we pick a smooth function $V$ compactly supported in $[1/4, 4]$ and such that for all $x > 0$,
$$
1 = \sum_{N} V \Big ( \frac{x}{N} \Big ) 
$$
with $N$ running over powers of two. 
Notice that by the large sieve and Rankin-Selberg,
$$
\sum_{\substack{q \in \mathcal{Q}_{\delta, \nu} \\ \chi \pmod{q} \\ \text{primitive}}} \frac{\lambda_{\pi}(n) \chi(\pm n)}{\sqrt{n}} W_1 \Big ( \frac{n}{N_{\nu}} \Big ) V \Big ( \frac{n}{N} \Big ) \ll_{A, \varepsilon} Q^2 (\log Q)^{-A}
$$
for any $A > 10$, $V$ smooth and compactly supported away from zero and $N > N^{+}_{\nu, \varepsilon}$. Similarly,
$$
\sum_{\substack{q \in \mathcal{Q}_{\delta, \nu} \\ \chi \pmod{q} \\ \text{primitive}}} \varepsilon(\pi, \chi) \sum_{n} \frac{\lambda_{\tilde{\pi}}(n) \overline{\chi(\pm n)}}{\sqrt{n}} W_2 \Big ( \frac{m}{M_{\nu}} \Big ) V \Big ( \frac{m}{M} \Big ) \ll_{A, \varepsilon} Q^2 (\log Q)^{-A}
$$
for any $A > 10$, $V$ smooth and compactly supported away from $0$ and $M > M^{+}_{\nu, \varepsilon}$. Therefore, we can write
$$
\sum_{\substack{q \in \mathcal{Q}_{\delta, \nu} \\ \chi \pmod{q} \\ \text{primitive}}} \chi(\pm 1) L(\tfrac 12, \pi \otimes \chi) = \sum_{N \leq N^{+}_{\nu, \varepsilon}} \mathcal{F}_{N} + \sum_{M \leq M^{+}_{\nu, \varepsilon}} \mathcal{D}_{M} + O_{A, \varepsilon}(Q^2 (\log Q)^{-A})
$$
with $N$, $M$ running over powers of two, and
where,
$$
\mathcal{F}_{N} := \sum_{\substack{q \in \mathcal{Q}_{\delta, \nu} \\ \chi \pmod{q} \\ \text{primitive}}} \sum_{n} \frac{\lambda_{\pi}(n) \chi(\pm n)}{\sqrt{n}} V \Big ( \frac{n}{N} \Big ) W_1 \Big ( \frac{n}{N_{\nu}} \Big )
$$
and $V$ is a smooth function compactly supported in $[1/4, 4]$
and similarly,
$$
\mathcal{D}_{M} := \sum_{\substack{q \in \mathcal{Q}_{\delta, \nu} \\ \chi \pmod{q} \\ \text{primitive}}} \varepsilon(\pi, \chi) \sum_{m} \frac{\lambda_{\tilde{\pi}}(m) \overline{\chi}(\pm m)}{\sqrt{m}} V \Big ( \frac{m}{M} \Big ) W_2 \Big ( \frac{m}{M_{\nu}} \Big ).
$$
We now collect our previous bounds in each of the ranges.
\subsection{$\mathcal{F}_{N}$ in the range $N \leq Q^2 (\log Q)^{-10^9}$}
Appealing to Proposition \ref{prop:third}  we find,
$$
\sum_{N \leq Q^2 (\log Q)^{-10^9}} \mathcal{F}_{N} = \mathbf{1}_{\pm 1 = 1} Q^2 (\log Q)^{-1 + \delta + \delta \log \frac{1}{\delta} + o(1)} + O ( Q^2 (\log Q)^{-100} ). 
$$
as $Q \rightarrow \infty$.

\subsection{$\mathcal{F}_{N}$ in the range with $N_{\nu, \varepsilon}^{+} \geq N \geq Q^2 (\log Q)^{-10^9}$}

We first note that there exists a small $0 < \kappa < 1$ for which $\lambda_{\pi}$ is Siegel-Walfisz.
This follows for example from Brumley's result \cite{Bru06} (see also \cite{KT22} where the details of the deduction are worked out or Lemma \ref{le:S-W} later in this paper).
There are $\log\log Q$ dy-adic ranges to consider in this case.
According to Proposition \ref{prop:first} their total contribution is, 
\begin{align*}
& \ll e^{O(1 / \delta)} \cdot \frac{\sqrt{N_{\nu, \varepsilon}^{+}} Q}{(\log Q)^{3/2}} (\log Q)^{4 \nu + \delta \log \frac{1}{\delta} + 10 \delta + 2 \varepsilon}  + C(\nu, \kappa) Q^2 (\log Q)^{-10} \\
& \ll e^{O(1 / \delta)} \cdot Q^2(\log Q)^{-1 - 100 \nu + \varepsilon + \delta \log \frac{1}{\delta} + 10 \delta + 2 \varepsilon} + C(\nu, \kappa) Q^2 (\log Q)^{-10}
\end{align*}
with $C(\nu, \kappa)$ a constant depending only on $\nu$ and $\kappa$. The above is negligible compared to the main term
$Q^2 (\log Q)^{-1 + \delta + \delta \log \frac{1}{\delta} + o(1)}$ for any fixed $0< \varepsilon<\delta<\nu<1/1000$.

\subsection{$\mathcal{D}_{M}$ in the range $M \leq Q^2 (\log Q)^{-10^9}$}

In this range we interpret $\mathcal{Q}_{\delta, \nu}$ as a product of a set $\mathcal{R}$ consisting of a product of two primes $p_1 p_2$ with $p_1 \sim P_1 := (\log Q)^{\kappa \nu}$, $p_2 \sim P_2 := (\log Q)^{10000}$ and
another set $\mathcal{S}$ consisting of $m \sim Q / (P_1 P_2)$ an integer with at most $\delta \log\log Q + 10$ prime factors, all larger than $(\log Q)^{20000}$. Let $M_{\text{max}} := Q^2 (\log Q)^{-10^9}$.
Applying Proposition \ref{prop:second} we get a bound,
\begin{align*}
  (\log Q)^{C \delta + 2 \varepsilon} \cdot \Big ( \sqrt{M_{\text{max}}} Q \sqrt{R} + Q^2 R^{-1/4} \Big ) 
\end{align*}
If $\kappa \nu < 10^3$ then $(\log Q)^{10000} \leq R \leq (\log Q)^{20000}$. If in addition we assume that $C \delta < 10^3$ and $0 < \varepsilon < 1$, then since $M_{\text{max}} \leq Q^2 (\log Q)^{-10^9}$ the above is
$$
\ll \frac{Q^2}{(\log Q)^{10}}.
$$
We record again the conditions $\kappa \nu < 10^3$, $C \delta < 10^3$ and $0 < \varepsilon < 1$. 

\subsection{$\mathcal{D}_M$ in the range $M_{\nu, \varepsilon}^{+} \geq M \geq Q^2 (\log Q)^{-10^9}$}

In this range we interpret $\mathcal{Q}_{\delta, \nu}$ as a product of a set $\mathcal{R}$ consisting of one prime $p_1 \sim P_1 := (\log Q)^{\kappa \nu}$ and
another set $\mathcal{S}$ consisting of $p_2 m$ with $p_2 \sim P_2 := (\log Q)^{10000}$ and $m \sim Q / (P_1 P_2)$ an integer with at most $\delta \log\log Q + 10$ prime factors, all larger than $(\log Q)^{20000}$. Applying Proposition \ref{prop:second} we get a bound,
\begin{align*}
  \mathcal{D}_{M} & \ll (\log Q)^{C \delta + 2 \varepsilon} \cdot \Big ( \sqrt{M R} Q (\log Q)^{-\frac{1}{2} + \frac{\delta}{2} + \frac{\delta}{2} \log \frac{1}{\delta}} + Q^2 (\log Q)^{-1 + \delta + \delta \log \frac{1}{\delta}} R^{-1/4} \Big ) \\ & \ll (\log Q)^{C \delta + 2 \varepsilon} \cdot \Big ( Q^2 (\log Q)^{-\frac{1}{2} + 100 \nu + \kappa \nu / 2 - \frac{1}{2} + \frac{\delta}{2} + \frac{\delta}{2} \log \frac{1}{\delta}} + Q^2 (\log Q)^{-1 + \delta + \delta \log \frac{1}{\delta} - \kappa \nu / 4} \Big ) 
\end{align*}
Notice that we could have written down a more careful bound without the loss of $(\log Q)^{C \delta + 2 \varepsilon}$ in the first term. It however makes no difference. 
Summing all these contributions only adds a factor of $\log\log Q$ which is negligible for our purposes.
We therefore require that,
\begin{align} \label{eq:master}
  \begin{cases}
    C\delta + 2 \varepsilon - 1 + 100 \nu + \frac{\kappa \nu}{2} + \frac{\delta}{2} + \frac{\delta}{2} \log \frac{1}{\delta} & < - 1 + \delta \log \frac{1}{\delta} - 10^9 \cdot \delta  \\
    C \delta + 2 \varepsilon - 1 + \delta + \delta \log \frac{1}{\delta} - \frac{\kappa \nu}{4} & < -1 + \delta \log \frac{1}{\delta} - 10^9 \cdot \delta 
  \end{cases}
\end{align}
Finally we have the conditions $0 < C \delta < 10^3$, $\kappa \nu < 10^3$, $0 < \varepsilon < 1$ and $0<\varepsilon < \delta<\nu<1/1000$ carrying over from the previous ranges. For convenience we rewrite \eqref{eq:master} as
\begin{align*}
 \begin{cases}
    2 \varepsilon + ( 100 + \kappa / 2) \nu + (C + 1/2 + 10^9) \delta & <  \frac{\delta}{2} \log \frac{1}{\delta} \\
    2 \varepsilon + (1 + 10^9 + C) \delta & < \frac{\kappa}{4} \cdot \nu  
  \end{cases}  
\end{align*}
If the conditions in \eqref{eq:master} are met we obtain,
$$
\sum_{Q^2 (\log Q)^{-10^9} \leq M \leq M_{\nu, \varepsilon}^{+}} \mathcal{D}_M \ll (\log\log Q) \cdot Q^2 (\log Q)^{-1 + \delta + \delta \log \frac{1}{\delta} - 10^3 \cdot\delta}
$$
which is entirely acceptable.
A choice of parameters that satisfies all the conditions is not difficult to find, for example one can pick
$$
\varepsilon = \delta \ , \ \nu = \delta \sqrt{\log \frac{1}{\delta}}
$$
and pick $\delta > 0$ sufficiently small to make all the inequalities true.

Notice incidentally that the limiting case $\delta = 0$ barely fails to work (ignoring the factors $e^{C / \delta}$ which are mere technicalities). Note also that $\delta$ shrinks when either $\kappa$ shrinks or $C$ grows, as expected.
  \end{proof}

\begin{remark}
  Even though the above proof barely fails to work in the limiting case $\delta = 0$ it can be made to work in the case $\delta = 0$ under the additional assumption that there exists an exponent $\eta > 0$ such that,
  \begin{equation} \label{eq:stuff}
  \sum_{p \leq x} |\lambda_{\pi}(p)|^{2 + \eta} \ll \frac{x}{\log x},
  \end{equation}
  since this would provide a decisive (for the case $\delta = 0$) improvement in Proposition \ref{prop:first}. In particular under the assumption \eqref{eq:stuff} one could take $\mathcal{Q}$ to consist of a product of three primes. In an earlier version of the manuscript we operated under the assumption of \eqref{eq:stuff} until we realized that integers with $\leq \delta \log\log Q + 20$ prime factors can be used successfully thanks to the fact that $\delta \log \frac{1}{\delta}$ is larger than $C \delta$ (any any fixed $C > 10$) for \textbf{small} $\delta > 0$. Thus the density of such integers grows sufficiently quicker (but only for small $\delta$!) to allow us to overcome the loss from Deligne's bound.
\end{remark}

\section{Complete character sums}

Let $\pi$ be a unitary cuspidal automorphic representation of $\mathrm{GL}_{4}(\mathbb{A}_{\mathbb{Q}})$. Denote by $\varepsilon(\pi, \chi)$ the root number of the $L$-function $L(s, \pi \otimes \chi)$. Throughout this section let
$$
\mathcal{E}_{\pi}(m; q) := \sum_{\substack{\chi \pmod{q} \\ \text{primitive}}} \varepsilon(\pi, \chi) \overline{\chi}(m)
$$
Notice that $\mathcal{E}_{\pi}(m; q)$ is $q$-periodic in $m$.
We use the notation $\mathcal{E}$ to signal that $\mathcal{E}$ is some sort of transform of the root number
$\varepsilon(\pi, \chi)$. The bulk of this section is spent analyzing $\mathcal{E}$ and various complete exponential sums
arising from $\mathcal{E}$.

From \cite{Luo05} we know that, for $(q, N_{\pi}) = 1$ where $N_{\pi}$ is the conductor of $\pi,$ 
$$
\varepsilon(\pi, \chi) = c_{\pi} w_{\pi}(q) \chi(N_{\pi}) \cdot \frac{\tau(\chi)^4}{q^2} \ , \ \tau(\chi) := \sum_{x \pmod{q}} \chi(x) e \Big ( \frac{x}{q} \Big ),
$$
where $|c_{\pi}| = 1$, $w_{\pi}(q)$ is the central unitary character of $\pi$, $N_{\pi}$ is the conductor of $\pi$ and $\tau(\chi)$ denotes the Gauss sum associated to $\chi$.
Therefore, for $(m, q) = 1$,
\begin{equation} \label{eq:stuff123}
\frac{1}{\sqrt{q}} \cdot \mathcal{E}_{\pi}(m; q) = c_{\pi} w_{\pi}(q) \mathcal{T}_{4}(N_{\pi} \overline{m}; q)
\end{equation}
where
$$
\mathcal{T}_{k}(\ell; q) := \frac{1}{\sqrt{q}} \sum_{\substack{\chi \pmod{q} \\ \text{primitive}}} \frac{\tau(\chi)^k}{q^{k / 2}} \chi(\ell).
$$
For $(m,q) \neq 1$ even though the inverse $\overline{m}$ is undefined we declare that
$$
\mathcal{T}_{4}(N_{\pi} \overline{m}; q) := 0
$$
so that \eqref{eq:stuff123} holds for all $m$. 
Notice that for $k$ even, $\mathcal{T}_k(\ell; q)$ is real-valued because
$$
\overline{\tau(\chi)^k \chi(l)} = \tau(\overline{\chi})^k \overline{\chi}(l)
  $$
since $\chi(-1)^{k} = 1$ when $k \in 2 \mathbb{Z}$.
To a first approximation $\mathcal{T}_{k}(\ell; q)$ is a hyper-Kloosterman sum, this would be identically the case if in the definition of $\mathcal{T}_k$ the sum over primitive characters were replaced by a sum over all characters. 

  \begin{lemma} \label{le:mult}
    Let $r,s$ be squarefree with $(r,s) = 1$. Then, for $k \geq 1$,
  $$
  \mathcal{T}_{k}(m; rs) = \mathcal{T}_{k}(m r^k; s)\mathcal{T}_{k}(m s^k; r).
  $$
\end{lemma}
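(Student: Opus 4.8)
The plan is to reduce the identity to the two standard multiplicativity facts for Dirichlet characters and Gauss sums modulo coprime conductors. First I would recall that, since $(r,s)=1$, the Chinese Remainder Theorem sets up a bijection between primitive characters $\chi \pmod{rs}$ and pairs $(\chi_r,\chi_s)$ with $\chi_r$ primitive modulo $r$ and $\chi_s$ primitive modulo $s$, via $\chi = \chi_r\chi_s$; the squarefreeness of $r$ and $s$ plays no essential role here but is harmless. Under this correspondence the classical Gauss sum factorization reads
$$
\tau(\chi) = \chi_r(s)\,\chi_s(r)\,\tau(\chi_r)\,\tau(\chi_s),
$$
so that $\tau(\chi)^k = \chi_r(s^k)\chi_s(r^k)\tau(\chi_r)^k\tau(\chi_s)^k$.

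Next I would substitute this into the definition of $\mathcal{T}_k(m;rs)$ and split the sum over primitive $\chi \pmod{rs}$ into the double sum over primitive $\chi_r \pmod r$ and $\chi_s \pmod s$. Absorbing the twisting factors via $\chi_r(m)\chi_r(s^k) = \chi_r(ms^k)$ and $\chi_s(m)\chi_s(r^k) = \chi_s(mr^k)$, and using $(rs)^{k/2}=r^{k/2}s^{k/2}$ together with $\sqrt{rs}=\sqrt r\,\sqrt s$, the double sum factors as a product of two complete character sums which are precisely $\mathcal{T}_k(ms^k;r)$ and $\mathcal{T}_k(mr^k;s)$. This establishes the identity whenever $(m,rs)=1$.

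Finally I would dispose of the degenerate case $(m,rs)>1$. If $(m,r)>1$ then, since $(s,r)=1$ gives $(ms^k,r)=(m,r)>1$, we have $\chi_r(ms^k)=0$ for every $\chi_r$, so the right-hand side vanishes; meanwhile the left-hand side vanishes because $\chi(m)=0$ for every $\chi \pmod{rs}$. The symmetric argument handles $(m,s)>1$, so both sides are $0$ and the asserted equality persists. This is also consistent with the convention $\mathcal{T}_4(N_\pi\overline m;q):=0$ for $(m,q)\neq 1$ used in \eqref{eq:stuff123}.

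I do not expect a serious obstacle: the only points demanding care are pinning down the exact shape of the Gauss sum factorization (the factors $\chi_r(s)$ and $\chi_s(r)$, rather than their complex conjugates or inverses) and verifying that primitivity of $\chi$ modulo $rs$ is equivalent to primitivity of both of its components — which is the one place where an oversight could creep in.
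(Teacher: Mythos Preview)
Your proposal is correct and follows essentially the same approach as the paper: decompose primitive characters modulo $rs$ via CRT into a product of primitive characters modulo $r$ and modulo $s$, apply the Gauss sum factorization $\tau(\chi_r\chi_s)=\chi_r(s)\chi_s(r)\tau(\chi_r)\tau(\chi_s)$, and split the resulting double sum into the product $\mathcal{T}_k(ms^k;r)\mathcal{T}_k(mr^k;s)$. You are in fact slightly more careful than the paper in explicitly addressing the degenerate case $(m,rs)>1$.
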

\begin{proof}
  Any primitive character $\chi \pmod{r s}$ can be written as $\psi \nu$ with $\psi \pmod{r}$ and $\nu \pmod{s}$. Furthermore
  $$
  \tau(\psi \nu) = \tau(\psi) \tau(\nu) \nu(r) \psi(s).
  $$
  Therefore,
  \begin{align*}
  \mathcal{T}_{k}(\ell; r s) & = \frac{1}{\sqrt{r s}} \sum_{\substack{\psi \pmod{r} \\ \nu \pmod{s} \\ \text{all primitive}}} \frac{\tau(\psi \nu)^k}{r^{k/2} s^{k/2}} (\psi \nu)(\ell) \\ & = \Big ( \frac{1}{\sqrt{r}} \sum_{\substack{\psi \pmod{r} \\ \text{primitive}}} \frac{\tau(\psi)^k}{r^{k/2}} \psi(s^k \ell) \Big ) \cdot \Big ( \frac{1}{\sqrt{s}} \sum_{\substack{\nu \pmod{s} \\ \text{primitive}}} \frac{\tau(\nu)^k}{s^{k/2}} \nu(r^k \ell) \Big ) \\ & = \mathcal{T}(s^k \ell; r) \mathcal{T}(r^k \ell; s)
  \end{align*}
  as claimed.
\end{proof}

Using this proposition we obtain the following.
\begin{lemma}\label{le:complete1}
  Let $(r, s_1 s_2) = 1$ with $r$, $s_1$ and $s_2$ squarefree.
  Let $v_1, v_2$ be integers with $(v_1 v_2, r s_1 s_2) = 1$.
  Let $d = (s_1, s_2)$ and $s_i^{\star} = s_i / d,$ $i=1, 2$. Then, for $k \geq 1$,
  \begin{align*}
    \mathcal{K}_{k}(v_1, v_2, \ell; r, s_1, s_2) & := \frac{1}{\sqrt{r s_1 s_2}} \sum_{\substack{x \pmod {r s_1 s_2} \\ (x, r s_1 s_2) = 1}} \mathcal{T}_{k}(v_1 x; r s_1) \mathcal{T}_{k}(v_2 x ; r s_2) e \Big ( \frac{\ell x}{r s_1 s_2} \Big )
  \end{align*}
  is zero unless $(\ell, s_1^{\star} s_2^{\star}) = 1$ and $d | \ell$. In the remaining case it is equal to,
  \begin{align*}\sqrt{d} & \mathcal{T}_{k + 1}(v_1 \overline{\ell} s_2^{\star} r^{k + 1} d^{k + 2}; s_1^{\star}) \mathcal{T}_{k + 1}(v_2 \overline{\ell} s_1^{\star} r^{k + 1} d^{k + 2}; s_2^{\star}) \\ & \times \mathcal{K}_{k}(v_1 {s_1^{\star}}^{k + 1} s_2^{\star}, v_2 s_1^{\star} {s_2^{\star}}^{k + 1}, \ell / d; r d, 1, 1)
  \end{align*}
\end{lemma}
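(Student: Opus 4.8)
The plan is to reduce the multi-variable sum $\mathcal{K}_k$ to a product of one-variable Gauss-sum transforms, using Lemma \ref{le:mult} to detach the "new" prime factors and then splitting the $x$-sum by the Chinese Remainder Theorem. The single mechanism needed is the elementary identity
$$
\frac{1}{\sqrt{q}} \sum_{\substack{x \bmod q \\ (x,q)=1}} \mathcal{T}_k(v x; q)\, e\Big(\frac{\ell x}{q}\Big) = \mathbf{1}_{(\ell,q)=1}\, \mathcal{T}_{k+1}\big(v \overline{\ell}; q\big),
$$
valid for $q$ squarefree, which follows immediately from $\sum_{x \bmod q} \chi(x) e(\ell x/q) = \overline{\chi}(\ell)\tau(\chi)$ for $\chi$ primitive modulo $q$ (note $\overline{\chi}(\ell)=0$ when $(\ell,q)>1$). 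This is how an exponential sum twisted by $\mathcal{T}_k$ is absorbed into $\mathcal{T}_{k+1}$.

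Since $r, d, s_1^\star, s_2^\star$ are pairwise coprime and each $s_i = d s_i^\star$ is squarefree, applying Lemma \ref{le:mult} to the factorisations $r s_i = (rd)\cdot s_i^\star$ gives $\mathcal{T}_k(v_i x; r s_i) = \mathcal{T}_k(v_i x (rd)^k; s_i^\star)\,\mathcal{T}_k(v_i x (s_i^\star)^k; rd)$ for $i=1,2$. Writing the modulus as $r s_1 s_2 = r d^2 s_1^\star s_2^\star$ and splitting $x$ by CRT into residues modulo $rd^2$, $s_1^\star$, $s_2^\star$, the summand factors: the factors $\mathcal{T}_k(v_i x (rd)^k; s_i^\star)$ depend only on $x$ modulo $s_i^\star$, the factors $\mathcal{T}_k(v_i x (s_i^\star)^k; rd)$ depend only on $x$ modulo $rd$, and the exponential splits multiplicatively. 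Carrying out the sums over residues modulo $s_1^\star$ and $s_2^\star$ via the identity above produces exactly the two factors $\mathcal{T}_{k+1}(v_1 \overline{\ell} s_2^\star r^{k+1} d^{k+2}; s_1^\star)$ and $\mathcal{T}_{k+1}(v_2 \overline{\ell} s_1^\star r^{k+1} d^{k+2}; s_2^\star)$, along with the conditions $(\ell, s_1^\star) = (\ell, s_2^\star) = 1$, i.e. $(\ell, s_1^\star s_2^\star)=1$; here one unwinds $(rd)^k \cdot (r s_1 s_2/s_1^\star) = (rd)^k\cdot r d^2 s_2^\star = r^{k+1} d^{k+2} s_2^\star$ (and symmetrically), with $\overline{\ell}$ understood modulo the relevant $s_i^\star$.

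The remaining and most delicate point is the sum over residues modulo $r d^2$, which is not squarefree once $d>1$. Writing a residue modulo $d^2$ as $a + d b$ with $a,b$ running modulo $d$ and $(a,d)=1$, the factors $\mathcal{T}_k(\cdot; rd)$ depend only on $(x \bmod r, a)$, whereas the exponential $e(\ell x \overline{r s_1^\star s_2^\star}/d^2)$ factors as $e(\ell a \overline{r s_1^\star s_2^\star}/d^2)\, e(\ell b \overline{r s_1^\star s_2^\star}/d)$. Summing over $b$ forces $d \mid \ell$ and contributes a factor $d$, which after renormalisation becomes the $\sqrt{d}$ in the statement; with $\ell = d\cdot(\ell/d)$ the surviving exponential over the residues modulo $rd$ recombines, by CRT, into $e(\gamma x/(rd))$ with $\gamma \equiv (\ell/d)\,\overline{s_1^\star s_2^\star} \pmod{rd}$. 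Finally the substitution $x \mapsto s_1^\star s_2^\star x$ on $(\mathbb{Z}/rd\mathbb{Z})^\times$ simultaneously turns $\mathcal{T}_k(v_i x (s_i^\star)^k; rd)$ into $\mathcal{T}_k$ with arguments $v_1 (s_1^\star)^{k+1} s_2^\star$ and $v_2 s_1^\star (s_2^\star)^{k+1}$, and turns the exponential parameter into $\ell/d$, so the leftover sum is precisely $\mathcal{K}_k(v_1 (s_1^\star)^{k+1} s_2^\star, v_2 s_1^\star (s_2^\star)^{k+1}, \ell/d; rd, 1, 1)$.

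To finish, one collects the normalising constant $\tfrac{1}{\sqrt{r s_1 s_2}} = \tfrac{1}{\sqrt{r}\, d\, \sqrt{s_1^\star}\, \sqrt{s_2^\star}}$, attributing $\tfrac{1}{\sqrt{s_i^\star}}$ to each $s_i^\star$-sum and $\tfrac{1}{d\sqrt r}$ to the sum modulo $rd^2$; the latter, after the $b$-sum contributes $d\,\mathbf{1}_{d\mid\ell}$ and the residual sum modulo $rd$ contributes $\sqrt{rd}\cdot\mathcal{K}_k(\cdots;rd,1,1)$, yields the factor $\sqrt{d}\,\mathbf{1}_{d\mid\ell}$. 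Assembling the three pieces gives the asserted identity. I expect the bookkeeping of the non-squarefree modulus $d^2$ — tracking how the $b$-sum produces both the divisibility condition $d\mid\ell$ and the factor $\sqrt d$, and how the various inverses recombine under CRT to give exactly the stated shifted arguments — to be the only genuinely fiddly step; the rest is a mechanical application of Lemma \ref{le:mult} and the one-variable identity above.
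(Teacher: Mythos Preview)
Your proposal is correct and follows essentially the same route as the paper: factor each $\mathcal{T}_k(v_i x; r s_i)$ via Lemma~\ref{le:mult}, split the $x$-sum by CRT, use the one-variable identity $\tfrac{1}{\sqrt q}\sum_x \mathcal{T}_k(vx;q)e(\ell x/q)=\mathbf{1}_{(\ell,q)=1}\mathcal{T}_{k+1}(v\overline{\ell};q)$ on the $s_i^\star$-pieces, and handle the non-squarefree modulus $rd^2$ by isolating a free variable modulo $d$ whose sum enforces $d\mid\ell$ and produces the $\sqrt d$. The only cosmetic difference is that the paper uses the explicit CRT parametrisation $x = a\,rd^2 + b\,s_1^\star s_2^\star$ (and then $b = x + y\,rd$), which bakes the factor $s_1^\star s_2^\star$ into the $rd$-variable and so avoids your final substitution $x\mapsto s_1^\star s_2^\star x$; the paper also derives the one-variable identity inline rather than stating it upfront.
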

\begin{proof}
Since $(r,s_1s_2)=1$ with $s_1$ and $s_2$ squarefree, $r d$ is coprime to $s_1^{\star} s_2^{\star}$.
  By Lemma \ref{le:mult}
  $$
  \mathcal{T}_{k}(v_i x; r s_i) = \mathcal{T}_{k}(v_i x; r d s_i^{\star}) = \mathcal{T}_{k}({s_i^{\star}}^k v_i x; r d) \mathcal{T}_{k}(v_i x r^k d^k; s_i^{\star}).
  $$
  We express $(x, r s_1 s_2) = 1$ as
  $x = a r d^2 + b s_1^{\star} s_2^{\star}$ with $(a, s_1^{\star} s_2^{\star}) = 1$ and $(b, r d) = 1$, $b \pmod{r d^2}$.
  In this way,
  $$
  \mathcal{T}_{k}(v_i x; r s_i) = \mathcal{T}_{k}({s_i^{\star}}^k v_i x; r d) \mathcal{T}_{k}(v_i x r^k d^k; s_i^{\star}) =
  \mathcal{T}_{k}(s_1^{\star} s_2^{\star} {s_i^{\star}}^k v_i b; r d) \mathcal{T}_{k}(v_i a r^{k + 1} d^{k + 2}; s_i^{\star})
  $$
  Thus, we find,
  \begin{align*}
  \mathcal{K}_{k}(v_1, v_2, \ell; r, s_1, s_2) & = \Big ( \frac{1}{\sqrt{s_1^{\star} s_2^{\star}}} \sum_{\substack{a \pmod{s_1^{\star} s_2^{\star}} \\ (a, s_1^{\star} s_2^{\star}) = 1}} \mathcal{T}_{k}(v_1 a r^{k + 1} d^{k + 2}; s_1^{\star}) \mathcal{T}_{k}(v_2 a r^{k + 1} d^{k + 2}; s_2^{\star}) e \Big ( \frac{\ell a}{s_1^{\star} s_2^{\star}} \Big ) \Big ) \\ & \times \Big ( \frac{1}{\sqrt{r} d} \sum_{\substack{b \pmod{r d^2} \\ (b, rd ) = 1}} \mathcal{T}_{k}({s_1^{\star}}^{k + 1} s_2^{\star} v_1 b ; r d) \mathcal{T}(s_1^{\star} {s_2^{\star}}^{k + 1} v_2 b; r d) e \Big ( \frac{\ell b}{r d^2} \Big ) \Big )
  \end{align*}
In the first sum we can write $a = x s_1^{\star} + y s_2^{\star}$ with $(x, s_2^{\star}) = 1 = (y, s_1^{\star})$ thus factoring the sum further as
$$
\Big ( \frac{1}{\sqrt{s_2^{\star}}} \sum_{\substack{x \pmod{s_2^{\star}} \\ (x, s_2^{\star}) = 1}} \mathcal{T}_k(v_2 x s_1^{\star} r^{k + 1} d^{k + 2}; s_2^{\star}) e \Big ( \frac{\ell x}{s_2^{\star}} \Big ) \Big ) \cdot \Big ( \frac{1}{\sqrt{s_1^{\star}}} \sum_{\substack{y \pmod{s_1^{\star}} \\ (y, s_1^{\star}) = 1}} \mathcal{T}_k(v_1 y s_2^{\star} r^{k + 1} d^{k + 2}; s_1^{\star}) e \Big ( \frac{\ell y}{s_1^{\star}} \Big ) \Big ).
$$
Using the definition of $\mathcal{T}_k$ we then find for $i,j \in \{1, 2\}$ with $i \neq j$,
\begin{align*}
\frac{1}{\sqrt{s_i^{\star}}} \sum_{\substack{x \pmod{s_i^{\star}} \\ (x, s_i^{\star}) = 1}} & \mathcal{T}_k(v_i x s_j^{\star} r^{k + 1} d^{k + 2}; s_i^{\star}) e \Big ( \frac{\ell x}{s_i^{\star}} \Big ) \\ & = \frac{1}{\sqrt{s_i^{\star}}} \sum_{\substack{\chi \pmod{s_i^{\star}} \\ \text{primitive}}} \frac{\tau(\chi)^k}{{s_i^{\star}}^{k / 2}} \sum_{\substack{x \pmod{s_i^{\star}} \\ (x, s_i^{\star}) = 1}} \chi(v_i x s_j^{\star} r^{k + 1} d^{k + 2}) e \Big ( \frac{\ell x}{s_i^{\star}} \Big )
\end{align*}
This evaluates to
$$
\mathbf{1}_{(\ell, s_i^{\star}) = 1} \sum_{\substack{\chi \pmod{s_i^{\star}} \\ \text{primitive}}} \frac{\tau(\chi)^{k + 1}}{{s_i^{\star}}^{(k + 1)/2}} \cdot \chi(v_i \overline{\ell} s_j^{\star} r^{k + 1} d^{k + 2}) = \mathcal{T}_{k + 1}(v_i \overline{\ell} s_j^{\star} r^{k + 1} d^{k + 2}; s_i^{\star}) \mathbf{1}_{(\ell, s_i^{\star}) = 1}
$$
  by the fact that the Gauss sum
\begin{align*}
\sum_{\substack{x \pmod{s_i^{\star}} \\ (x, s_i^{\star}) = 1}} \chi(x ) e \Big ( \frac{\ell x}{s_i^{\star}} \Big ) =0
\end{align*}
unless $(\ell, s_i^{\star}) \neq 1.$
Finally it remains to evaluate
$$\frac{1}{\sqrt{r} d} \sum_{\substack{b \pmod{r d^2} \\ (b, rd ) = 1}} \mathcal{T}_{k}({s_1^{\star}}^{k + 1} s_2^{\star} v_1 b ; r d) \mathcal{T}(s_1^{\star} {s_2^{\star}}^{k + 1} v_2 b; r d) e \Big ( \frac{\ell b}{r d^2} \Big ).$$
Write $b = x + y r d$ with $y \pmod{d}$ and $x \pmod{rd}$ with $(x, rd ) = 1$. Plugging this inside the sum evaluates to
$$
\frac{1}{\sqrt{r} d} \sum_{y \pmod{d}} e \Big ( \frac{\ell y}{d} \Big ) \sum_{\substack{x \pmod{r d}\\ (x,rd)=1}} \mathcal{T}_{k}({s_1^{\star}}^{k + 1} s_2^{\star} v_1 x ; r d) \mathcal{T}(s_1^{\star} {s_2^{\star}}^{k + 1} v_2 x; r d) e \Big ( \frac{\ell x}{r d^2} \Big ).
$$
This is zero unless $d \mid \ell$. And in the case when $d | \ell$ we can rewrite the above as
$$
\sqrt{d} \mathcal{K}_{k} (v_1 {s_1^{\star}}^{k + 1} s_{2}^{\star}, v_2 s_1^{\star} {s_{2}^{\star}}^{k + 1}, \ell / d; r d, 1, 1).
$$
\end{proof}

We now record bounds for $\mathcal{T}_k$ and $\mathcal{K}_k$.

\begin{lemma}\label{le:complete2}
  Let $q$ be squarefree and $\ell \in \mathbb{Z}$. 
  Then, for $k \geq 2$,
  $$
  |\mathcal{T}_k(\ell; q)| \leq C^{\omega(q)}
  $$
  for some constant $C > 10$ depending only on $k$.
\end{lemma}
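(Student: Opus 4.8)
The plan is to reduce to the case of prime modulus and then to recognize $\mathcal{T}_k(\cdot;p)$ as a normalized hyper-Kloosterman sum, to which Deligne's bound applies. First I would exploit the multiplicativity of Lemma~\ref{le:mult}: since $q$ is squarefree one can peel off one prime at a time, writing $q = p\cdot(q/p)$ with $(p,q/p)=1$ and iterating, to obtain
\[
\mathcal{T}_k(\ell;q) \;=\; \prod_{p\mid q}\mathcal{T}_k\!\big(\ell\,(q/p)^k;\,p\big).
\]
It therefore suffices to prove a bound $|\mathcal{T}_k(m;p)|\le C_0(k)$ uniform in the prime $p$ and the integer $m$; the lemma then follows by taking the product over the $\omega(q)$ primes dividing $q$, with $C = \max(C_0(k),11)$.

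Next I would evaluate $\mathcal{T}_k(m;p)$ for $p$ prime. Here the primitive characters mod $p$ are exactly the non-principal ones, and every character mod $p$ vanishes on multiples of $p$; hence $\mathcal{T}_k(m;p)=0$ (so the bound is trivial) unless $(m,p)=1$, which I henceforth assume. Completing the character sum, using orthogonality $\sum_{\chi\bmod p}\chi(a)=(p-1)\,\mathbf{1}_{a\equiv 1\,(p)}$ and the value $\tau(\chi_0)=-1$ at the principal character, one arrives at
\[
\mathcal{T}_k(m;p) \;=\; \frac{1}{p^{(k+1)/2}}\Big((p-1)\,\mathrm{Kl}_k(\overline{m};p)\;-\;(-1)^k\Big),
\]
where $\mathrm{Kl}_k(a;p)=\sum_{x_1\cdots x_k\equiv a\,(p)}e\big((x_1+\cdots+x_k)/p\big)$ is the hyper-Kloosterman sum (the monomials with some factor $\equiv 0$ drop out because $\chi(0)=0$). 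This is exactly the heuristic noted before the lemma: replacing primitive characters by all characters turns $\mathcal{T}_k$ into a hyper-Kloosterman sum, and the discrepancy is the single, bounded principal-character term. For $k=4$ this computation already appears in \cite{Luo05}.

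Finally I would invoke Deligne's bound $|\mathrm{Kl}_k(a;p)|\le k\,p^{(k-1)/2}$ for $(a,p)=1$ and $k\ge 2$, which gives
\[
|\mathcal{T}_k(m;p)| \;\le\; \frac{(p-1)\,k\,p^{(k-1)/2}+1}{p^{(k+1)/2}} \;\le\; k+1,
\]
so that $C_0(k)=k+1$ is admissible. I do not anticipate a real obstacle: the only substantive input is Deligne's estimate (which is also why the hypothesis $k\ge 2$ is imposed — for $k=1$ the sum $\mathrm{Kl}_1$ is degenerate), everything else being bookkeeping with the convention $\chi(0)=0$ and the principal-character correction. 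The sole points needing a little care are the case $(m,q)\ne 1$, handled by the vanishing observed above, and verifying that the iterated use of Lemma~\ref{le:mult} really produces the displayed product.
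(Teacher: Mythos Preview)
Your proposal is correct and follows essentially the same approach as the paper: reduce via the twisted multiplicativity of Lemma~\ref{le:mult} to prime modulus, identify $\mathcal{T}_k(\cdot;p)$ as $\frac{p-1}{p^{(k+1)/2}}\mathrm{Kl}_k(\overline{m};p)$ minus the principal-character correction, and apply Deligne's bound. Your treatment is in fact slightly more explicit than the paper's in writing out the iterated product formula and the admissible constant $C_0(k)=k+1$.
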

\begin{proof}
  For $(\ell, q) \neq 1$ we have $\mathcal{T}_k(\ell; q) = 0$. We can therefore
  assume $(\ell, q) = 1$. 
  Using Lemma \ref{le:mult} we see that it's enough to bound $\mathcal{T}_{k}(v; p)$ for $(v, p) = 1$.
  Note that
  \begin{align*}
  \mathcal{T}_k(v; p) & = \frac{1}{\sqrt{p}}\sum_{\chi \pmod{p}} \frac{\tau(\chi)^{k}}{p^{k / 2}} \chi(v) - \frac{(-1)^k}{p^{(k + 1) / 2}} \\ &  =
  \frac{\varphi(p)}{p^{(k + 1) / 2}} \sum_{x_1 \ldots x_k \equiv \overline{v} \pmod{p}} e \Big ( \frac{x_1 + \ldots + x_k}{p} \Big ) - \frac{(-1)^{k}}{p^{( k + 1) / 2}}.
  \end{align*}
  and this $\ll_{k} 1$ by Deligne's bound. Therefore by twisted multiplicativity (i.e Lemma \ref{le:mult}) we get a bound of $\leq C^{\omega(q)}$ for $q$ squarefree and with a constant $C$ depending solely on $k$.
\end{proof}

We also have a similar bound for $\mathcal{K}_{k}$.

\begin{lemma}\label{le:complete3}
  Let $q$ be squarefree and $\ell \in \mathbb{Z}$.
  Then for $k \geq 2$ with $k \equiv 0 \pmod{2}$,
  $$
  |\mathcal{K}_{k}(v_1, v_2, \ell; q, 1, 1)| \leq \sqrt{(\ell, q, v_1 - v_2)} \cdot C^{\omega(q)}
  $$
  for some constant $C > 10$ depending only on $k$. 
\end{lemma}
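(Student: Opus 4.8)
The plan is to reduce to the case $q=p$ prime, exactly as in Lemma \ref{le:complete2}, and then handle the prime case by separating $p\mid\ell$ from $p\nmid\ell$; the latter is where Deligne's bound enters. First I would establish twisted multiplicativity of $\mathcal{K}_k(\cdot,\cdot,\cdot;q,1,1)$ in $q$: if $q=q_1q_2$ with $(q_1,q_2)=1$ and $q$ squarefree, then splitting the variable $x$ by the Chinese Remainder Theorem, applying Lemma \ref{le:mult} to each factor $\mathcal{T}_k(v_ix;q_1q_2)=\mathcal{T}_k(v_ix q_1^k;q_2)\mathcal{T}_k(v_ixq_2^k;q_1)$, and factoring the additive character gives
$$
\mathcal{K}_k(v_1,v_2,\ell;q_1q_2,1,1)=\mathcal{K}_k(v_1q_2^k,v_2q_2^k,\ell\overline{q_2};q_1,1,1)\cdot\mathcal{K}_k(v_1q_1^k,v_2q_1^k,\ell\overline{q_1};q_2,1,1),
$$
where $\overline{q_2}q_2\equiv 1\pmod{q_1}$ and $\overline{q_1}q_1\equiv 1\pmod{q_2}$. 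Since $(q_1,q_2)=1$ these twists leave $(\ell,q_1,v_1-v_2)$ unchanged, and $(\ell,q_1q_2,v_1-v_2)=(\ell,q_1,v_1-v_2)(\ell,q_2,v_1-v_2)$; so by induction on $\omega(q)$ it suffices to prove $|\mathcal{K}_k(v_1,v_2,\ell;p,1,1)|\ll_k\sqrt{(\ell,p,v_1-v_2)}$ for every prime $p$. One may assume $p$ odd and $(v_1v_2,p)=1$, the sum being $0$ otherwise.

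For $p$ prime I would write, as in the proof of Lemma \ref{le:complete2}, $\mathcal{T}_k(v;p)=p^{-(k+1)/2}\big((p-1)S_k(\overline{v};p)-1\big)$, where $S_k(a;p)=\sum_{x_1\cdots x_k\equiv a\pmod{p}}e((x_1+\cdots+x_k)/p)$ is the hyper-Kloosterman sum with $|S_k(a;p)|\le k\,p^{(k-1)/2}$ for $(a,p)=1$ by Deligne. Expanding the product $\mathcal{T}_k(v_1x;p)\mathcal{T}_k(v_2x;p)$ and bounding the cross terms trivially reduces everything to the complete sum
$$
T:=\sum_{\substack{x\pmod{p}\\(x,p)=1}}S_k(\overline{v_1x};p)\,S_k(\overline{v_2x};p)\,e\Big(\frac{\ell x}{p}\Big),\qquad\mathcal{K}_k(v_1,v_2,\ell;p,1,1)=\frac{(p-1)^2}{p^{k+3/2}}\,T+O_k(p^{-1}).
$$
When $p\mid\ell$ the phase is trivial; substituting $u=\overline{v_1x}$ gives $T=\sum_{(u,p)=1}S_k(u;p)S_k(cu;p)$ with $c\equiv v_1\overline{v_2}\pmod{p}$, and opening the two Kloosterman sums and detecting the resulting multiplicative constraint by characters (using $\tau(\chi)\tau(\overline{\chi})=\chi(-1)p$ and the parity of $k$) yields the closed form $T=p^k\mathbf{1}_{p\mid v_1-v_2}+O(p^{k-1})$. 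Hence $|\mathcal{K}_k(v_1,v_2,\ell;p,1,1)|\ll_k\sqrt{p}\,\mathbf{1}_{p\mid v_1-v_2}+p^{-1/2}\ll_k\sqrt{(\ell,p,v_1-v_2)}$ in this case.

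The main obstacle is the case $p\nmid\ell$, where $(\ell,p,v_1-v_2)=1$ and one must beat the trivial bound $|T|\le k^2p^k$ by a full factor $\sqrt{p}$, i.e. prove $|T|\ll_k p^{k-1/2}$. I would argue $\ell$-adically: $x\mapsto S_k(\overline{v_ix};p)$ is $p^{(k-1)/2}$ times the trace function of the pullback of the Kloosterman sheaf $\mathcal{K}\ell_k$ under the automorphism $x\mapsto\overline{v_i}/x$ of $\mathbb{G}_m$, where $\mathcal{K}\ell_k$ is lisse on $\mathbb{G}_m$, geometrically irreducible, pure of weight $k-1$, of rank $k$, tame at $0$, with all breaks at $\infty$ equal to $1/k$. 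Thus $T=\sum_{x\in\mathbb{F}_p^{\times}}t_{\mathcal{F}}(x)$ with $\mathcal{F}:=[\overline{v_1}/x]^{*}\mathcal{K}\ell_k\otimes[\overline{v_2}/x]^{*}\mathcal{K}\ell_k\otimes\mathcal{L}_{\psi}$ lisse on $\mathbb{G}_m$, $\psi(x)=e(\ell x/p)$, pure of weight $2k-2$ and of rank $k^2$. The decisive point is that $\mathcal{F}$ and its dual have no geometrically trivial constituent: the two pulled-back Kloosterman sheaves are \emph{tame} at $\infty$, so every constituent of their tensor product is tame at $\infty$, while tensoring with $\mathcal{L}_{\psi}$ — whose unique break at $\infty$ is $1$, since $p\nmid\ell$ — forces every break of $\mathcal{F}$ at $\infty$ to equal $1$, making $\mathcal{F}$ totally wild at $\infty$. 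Therefore $H^0_c=H^2_c=0$, and Grothendieck-Ogg-Shafarevich gives $\dim H^1_c(\mathbb{G}_{m,\overline{\mathbb{F}_p}},\mathcal{F})=\mathrm{Swan}_0(\mathcal{F})+\mathrm{Swan}_\infty(\mathcal{F})\le k+k^2$ uniformly in $p$; Deligne's bound then bounds the Frobenius eigenvalues on $H^1_c$ by $p^{(2k-1)/2}$, so $|T|\le(k+k^2)\,p^{k-1/2}$. Combining the two cases gives the prime bound with a constant depending only on $k$, and multiplying over $p\mid q$ finishes the proof with $C=\max(C_0(k),11)$. The one genuinely nontrivial ingredient is this last estimate — concretely, the Swan-conductor-at-infinity argument showing no trivial subquotient survives the additive twist (alternatively one can invoke published bounds on correlation sums of hyper-Kloosterman sums twisted by an additive character); everything else is elementary character-sum manipulation.
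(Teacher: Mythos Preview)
Your proof is correct and follows the same overall strategy as the paper: reduce to prime modulus via twisted multiplicativity of $\mathcal{K}_k(\cdot,\cdot,\cdot;q,1,1)$, rewrite $\mathcal{T}_k$ in terms of hyper-Kloosterman sums, and then split into cases according to whether $p\mid\ell$ and whether $v_1\equiv v_2\pmod p$.

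The differences are in how the two nontrivial prime cases are handled. For the main case $p\nmid\ell$ (or $v_1\not\equiv v_2$), the paper simply invokes \cite[Corollary~3.3]{SumsOfProducts} as a black box, using that $K_k$ is self-dual for even $k$; you instead supply the underlying $\ell$-adic argument directly (tameness at $\infty$ after inversion, total wildness from the additive twist, Grothendieck--Ogg--Shafarevich to bound $\dim H^1_c$, then Deligne). This is the same mathematics unpacked, and your Swan-conductor bookkeeping is correct. Conversely, for the diagonal case $p\mid\ell$ and $v_1\equiv v_2$, the paper's route is cleaner than yours: rather than opening the Kloosterman sums, it goes back to the definition of $\mathcal{T}_k$ as a character sum and applies orthogonality (Parseval), getting the sharp bound $\sqrt{p}$ in one line without any error term. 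Your closed-form computation of $T$ is fine, but the paper's argument here is worth knowing.
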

\begin{remark}
Though it's irrelevant for our purpose $C^{\omega(q)}$ can be improved to $C^{\omega(q / (q, \ell))}$. 
\end{remark}
\begin{proof}
 First note that if $(v_1 v_2, q)  \neq 1$ then there is nothing to prove since in that case, 
  $$
  \mathcal{K}_{k}(v_1, v_2, \ell; q, 1, 1) = 0.
  $$
  Therefore we can assume that $(v_1 v_2, q) = 1$. 
  Suppose $q = r s$ with $r, s \geq 1$ and $(r,s) = 1$.
  Recall that,
  $$
  \mathcal{K}_{k}(v_1, v_2, \ell; r s, 1, 1) = \frac{1}{\sqrt{r s}} \sum_{\substack{x \pmod{r s} \\ (x, rs) = 1}} \mathcal{T}_k(v_1 x; r s) \mathcal{T}_k(v_2 x; r s) e \Big ( \frac{\ell x}{r s} \Big ).
  $$
  Write $x = a r + b s$ with $(a,s) = 1$ and $(b, r) = 1$. Using Lemma \ref{le:mult}
  $$
  \mathcal{T}_k(v_i x; r s ) = \mathcal{T}_k(v_i x s^k; r) \mathcal{T}_k(v_i x r^k; s) = \mathcal{T}_k(v_i b s^{k + 1}; r) \mathcal{T}_k(v_i a r^{k + 1}; s).
  $$
  Therefore
  \begin{align*}
  \mathcal{K}_k(v_1, v_2, \ell; q, 1, 1) & = \Big ( \sum_{\substack{a \pmod{s} \\ (a,s) = 1}} \mathcal{T}_k(v_1 a r^{k + 1}; s) \mathcal{T}_k(v_2 a r^{k + 1}; s) e \Big ( \frac{\ell a}{s} \Big ) \Big ) \\ & \times \Big ( \sum_{\substack{b \pmod{r} \\ (b,r) = 1}} \mathcal{T}_k(v_1 b s^{k + 1}; r) \mathcal{T}_k(v_2 b s^{k + 1}; r) e \Big ( \frac{\ell b}{r} \Big ) \Big )  \\ & = \mathcal{K}_k(v_1 s^{k + 1}, v_2 s^{k + 1}, \ell; r, 1, 1) \mathcal{K}_{k}(v_1 r^{k + 1}, v_2 r^{k + 1}, \ell; s, 1, 1)
  \end{align*}
  Therefore it is enough to bound $\mathcal{K}_{k}(v_1, v_2, \ell; p, 1, 1)$ with $p$ prime.
  We notice that,
  \begin{align} \label{eq:twistedkloosterm}
  \mathcal{K}_k(v_1, v_2, \ell; p, 1, 1) & = \frac{1}{\sqrt{p}} \sum_{\substack{x \pmod{p} \\ (x,p) = 1}} \mathcal{T}_k(v_1 x; p) \mathcal{T}_k(v_2 x; p) e \Big ( \frac{\ell x}{p} \Big ).
  \end{align}
  By definition
  $$
  \mathcal{T}_k(v_1 x; p) = \frac{p - 1}{p} K_k(\overline{v_1 x}; p) - \frac{(-1)^{k}}{p^{(k + 1) / 2}}
  $$
  where $K_k(v; p)$ denotes a hyper-Kloosterman sum
  $$
  K_k(v; p) := \frac{1}{p^{(k - 1) / 2}} \sum_{\substack{x_1 \ldots x_k \equiv u \pmod{p}}} e \Big ( \frac{x_1 + \ldots + x_k}{p} \Big ).
  $$
  Therefore, using Deligne's bound $K_k(u; p) \ll_{k} 1$,  we find,
  \begin{align*}
  \mathcal{K}_k(v_1, v_2, \ell; p, 1, 1) = \frac{1}{\sqrt{p}} \sum_{\substack{x \pmod{p} \\ (x,p) = 1}} K_k(\overline{v_1 x}; p) K_k(\overline{v_2 x}; p) e \Big ( \frac{\ell x}{p} \Big ) + O_{k} \Big ( \frac{1}{\sqrt{p}} \Big ).
  \end{align*}
  If $v_1 \not \equiv v_2 \pmod{p}$ then by \cite[Corollary 3.3]{SumsOfProducts} this is $\ll_{k} 1$ since $k$ is even so that $K_k$ is self-dual. Moreover if $\ell \not \equiv 0 \pmod{p}$ then also by \cite[Corollary 3.3]{SumsOfProducts} we have
  $$
  |\mathcal{K}_{k}(v_1, v_2, \ell; p, 1, 1)| \ll_{k} 1.
  $$
  There remains the case when $p | \ell$ and $v_1 \equiv v_2 \pmod{p}$. In this case, by definition of $\mathcal{T}_k$ the exponential sum \eqref{eq:twistedkloosterm} can be rewritten as
 \begin{align*}
  \frac{1}{\sqrt{p}} \sum_{\substack{x \pmod{p} \\ (x,p) = 1}} \Big | \frac{1}{\sqrt{p}} \sum_{\substack{\chi \pmod{p} \\ \text{primitive}}} \frac{\tau(\chi)^{k}}{p^{k / 2}} \chi(v_1 x) \Big |^2 & = \frac{\varphi(p)}{p^{3/2}} \sum_{\substack{\chi \pmod{p} \\ \text{primitive}}} \Big | \frac{\tau(\chi)^k}{p^{k / 2}} \Big |^2 \leq \sqrt{p}
  \end{align*}
 And this gives the bound $|\mathcal{K}_{k}(v_1, v_2, \ell; p, 1, 1)| \leq \sqrt{p}$ in the case $p | \ell$ and $v_1 \equiv v_2 \pmod{p}$. Using twisted multiplicativity we then obtain the claim. 
\end{proof}

We are now ready to state our bounds for the exponential sums $\mathcal{K}_4$.

\begin{lemma} \label{le:complete4}
  Let $r, s_1, s_2$ be squarefree with $(r, s_1 s_2) = 1$.
  Let $v_1, v_2 \in \mathbb{Z}$. 
  Write $d = (s_1, s_2)$ and $s_i^{\star} := s_i / d$ for $i \in \{ 1, 2\}$. 
  Then,
  $$
  \mathcal{K}_{4}(v_1 , v_2, \ell; r, s_1, s_2) \leq \mathbf{1}_{d | \ell} \cdot \mathbf{1}_{(\ell, s_1^{\star} s_2^{\star}) = 1} \cdot \sqrt{d} C^{\omega(r s_1 s_2)} \cdot \sqrt{(\ell / d, r d, v_1 {s_1^{\star}}^4  - v_2 {s_2^{\star}}^4 )}
  $$
  with $C > 10$ a constant depending only on $k$.
\end{lemma}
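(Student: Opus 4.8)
Here is the plan. We read the bound straight off the identity of Lemma~\ref{le:complete1} (with $k=4$), after estimating each of its three factors using the pointwise bounds of Lemmas~\ref{le:complete2} and~\ref{le:complete3}. Before that we dispose of the degenerate cases: since $\mathcal{T}_4(v_i x;rs_i)=0$ whenever $(v_i x,rs_i)\neq 1$, and the sum defining $\mathcal{K}_4(v_1,v_2,\ell;r,s_1,s_2)$ runs over $x$ coprime to $rs_1s_2$, the whole sum vanishes identically unless $(v_1,rs_1)=(v_2,rs_2)=1$; in the complementary case the asserted inequality is trivial because its right-hand side is nonnegative. So we may assume $(v_1,rs_1)=(v_2,rs_2)=1$, and in particular $(v_i,s_i^{\star})=1$, which is all the coprimality that the proof of Lemma~\ref{le:complete1} actually uses.

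Next I would invoke Lemma~\ref{le:complete1} with $k=4$. It tells us that $\mathcal{K}_4(v_1,v_2,\ell;r,s_1,s_2)$ vanishes unless $d\mid\ell$ and $(\ell,s_1^{\star}s_2^{\star})=1$ --- this produces exactly the pair of indicator functions in the statement --- and that in the remaining range
\[
\mathcal{K}_4(v_1,v_2,\ell;r,s_1,s_2)=\sqrt{d}\,\mathcal{T}_5\big(v_1\overline{\ell}s_2^{\star} r^5 d^6;s_1^{\star}\big)\,\mathcal{T}_5\big(v_2\overline{\ell}s_1^{\star} r^5 d^6;s_2^{\star}\big)\,\mathcal{K}_4\big(v_1{s_1^{\star}}^5 s_2^{\star},\,v_2 s_1^{\star} {s_2^{\star}}^5,\,\ell/d;\,rd,1,1\big).
\]
By Lemma~\ref{le:complete2} (applied with $k=5\geq 2$) the two $\mathcal{T}_5$ factors are at most $C^{\omega(s_1^{\star})}$ and $C^{\omega(s_2^{\star})}$; by Lemma~\ref{le:complete3} (applied with the even $k=4\geq 2$) the last factor is at most $\sqrt{(\ell/d,\,rd,\,v_1{s_1^{\star}}^5 s_2^{\star}-v_2 s_1^{\star} {s_2^{\star}}^5)}\cdot C^{\omega(rd)}$.

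It then remains to clean up the two arithmetic quantities. Factoring $v_1{s_1^{\star}}^5 s_2^{\star}-v_2 s_1^{\star} {s_2^{\star}}^5=s_1^{\star}s_2^{\star}(v_1{s_1^{\star}}^4-v_2{s_2^{\star}}^4)$ and noting that $rd$ is coprime to $s_1^{\star}s_2^{\star}$ (because $(r,s_1s_2)=1$ and $s_1,s_2$ are squarefree), any common divisor of $\ell/d$, $rd$ and this expression is coprime to $s_1^{\star}s_2^{\star}$ and hence divides $v_1{s_1^{\star}}^4-v_2{s_2^{\star}}^4$, so the gcd equals $(\ell/d,\,rd,\,v_1{s_1^{\star}}^4-v_2{s_2^{\star}}^4)$. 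For the character counts, $\omega(s_i)=\omega(d)+\omega(s_i^{\star})$ and the distinct primes of $rs_1s_2$ are those of $r$ together with those dividing $s_1$ or $s_2$, whence $\omega(s_1^{\star})+\omega(s_2^{\star})+\omega(rd)=\omega(r)+\omega(s_1)+\omega(s_2)-\omega(d)=\omega(rs_1s_2)$; enlarging $C$ to the larger of the two absolute constants furnished by Lemmas~\ref{le:complete2} and~\ref{le:complete3}, the product of the three $C^{(\cdot)}$-factors is $\leq C^{\omega(rs_1s_2)}$. Combining this with the $\sqrt{d}$ coming from Lemma~\ref{le:complete1} yields precisely the claimed bound. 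There is no genuine obstacle in this argument; the only two points that call for a little care are the reduction to coprime $v_1,v_2$ (so that Lemma~\ref{le:complete1} applies) and the two elementary identities above reconciling the gcd and the $\omega$-count with the exact shape of the asserted inequality.
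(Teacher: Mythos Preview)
Your argument is correct and follows precisely the paper's approach: apply Lemma~\ref{le:complete1} with $k=4$, then bound the resulting $\mathcal{T}_5$-factors by Lemma~\ref{le:complete2} and the $\mathcal{K}_4(\cdot;rd,1,1)$-factor by Lemma~\ref{le:complete3}, and finally simplify the gcd using $(rd,s_1^{\star}s_2^{\star})=1$. Your write-up is in fact more detailed than the paper's (which compresses everything into one line), in particular you spell out the reduction to the case $(v_i,rs_i)=1$, the factorization $v_1{s_1^{\star}}^5 s_2^{\star}-v_2 s_1^{\star}{s_2^{\star}}^5=s_1^{\star}s_2^{\star}(v_1{s_1^{\star}}^4-v_2{s_2^{\star}}^4)$, and the bookkeeping $\omega(s_1^{\star})+\omega(s_2^{\star})+\omega(rd)=\omega(rs_1s_2)$.
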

\begin{proof}
  Upon combining Lemma \ref{le:complete1}, Lemma \ref{le:complete2} and Lemma \ref{le:complete3} we obtain
  $$
  \mathcal{K}_{4}(v_1 , v_2, \ell; r, s_1, s_2) \leq \mathbf{1}_{d | \ell} \cdot \mathbf{1}_{(\ell, s_1^{\star} s_2^{\star}) = 1} \cdot \sqrt{d} C^{\omega(r s_1 s_2)} \cdot \sqrt{(\ell / d, r d, v_1 {s_1^{\star}}^5 s_2^{\star} - v_2 {s_2^{\star}}^5 s_1^{\star})}
  $$
  We then simplify this bound by noting that since $(r d, s_1^{\star} s_2^{\star}) = 1$ we have,
  $$
  (\ell / d, r d, v_1 {s_1^{\star}}^5 s_2^{\star} - v_2 {s_2^{\star}}^5 s_1^{\star}) = (\ell / d, r d, v_1 {s_1^{\star}}^4 - v_2 {s_2^{\star}}^4).
  $$
\end{proof}

We also record the following simple (diagonal) case.

\begin{lemma} \label{le:complete5}
  Let $(r,s) = 1$. Then, 
  $$
  |\mathcal{K}_{4}(v,v,0; r, s, s)| \leq \sqrt{r} s. 
  $$
\end{lemma}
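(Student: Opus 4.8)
The plan is to unfold the definition of $\mathcal{K}_4$ directly and reduce matters to an orthogonality identity for Gauss sums. Specialising the definition of $\mathcal{K}_k$ from Lemma~\ref{le:complete1} to $s_1=s_2=s$, $v_1=v_2=v$ and $\ell=0$ (so that the exponential factor is $1$) gives
\[
\mathcal{K}_4(v,v,0;r,s,s)=\frac{1}{\sqrt{rs^2}}\sum_{\substack{x \bmod rs^2\\ (x,rs^2)=1}}\mathcal{T}_4(vx;rs)^2=\frac{1}{s\sqrt{r}}\sum_{\substack{x \bmod rs^2\\ (x,rs^2)=1}}\mathcal{T}_4(vx;rs)^2.
\]
If $(v,rs)\neq 1$ then $(vx,rs)\neq 1$ for every $x$ coprime to $rs^2$, hence every $\mathcal{T}_4(vx;rs)$ vanishes and the claimed bound is trivial; so I may assume $(v,rs)=1$, in which case $v$ is also a unit modulo $rs^2$.

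\textbf{Step 1 (collapsing the $x$-sum).} Since $rs^2$ and $rs$ have the same prime divisors, an integer is coprime to $rs^2$ if and only if it is coprime to $rs$; therefore the reduction $(\mathbb{Z}/rs^2\mathbb{Z})^\times\to(\mathbb{Z}/rs\mathbb{Z})^\times$ is surjective with every fibre of size exactly $s$. Composing with multiplication by the unit $v$, the map $x\mapsto vx \bmod rs$ is an exactly $s$-to-one surjection from the units modulo $rs^2$ onto the units modulo $rs$. Since $\mathcal{T}_4(vx;rs)$ depends on $x$ only through $vx \bmod rs$, this turns the previous display into
\[
\mathcal{K}_4(v,v,0;r,s,s)=\frac{1}{\sqrt{r}}\sum_{\substack{y \bmod rs\\ (y,rs)=1}}\mathcal{T}_4(y;rs)^2.
\]

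\textbf{Step 2 (orthogonality).} Recalling that $\mathcal{T}_4$ is real-valued, expand $\mathcal{T}_4(y;rs)^2=|\mathcal{T}_4(y;rs)|^2$ via the definition of $\mathcal{T}_4$ as a sum over primitive characters modulo $rs$, and use the orthogonality relation $\sum_{y \bmod rs}\chi_1(y)\overline{\chi_2}(y)=\varphi(rs)\mathbf{1}_{\chi_1=\chi_2}$ together with $|\tau(\chi)|^2=rs$ for $\chi$ primitive modulo $rs$. Everything collapses to
\[
\sum_{\substack{y \bmod rs\\ (y,rs)=1}}\mathcal{T}_4(y;rs)^2=\frac{\varphi(rs)}{rs}\cdot\#\{\chi \bmod rs:\ \chi\ \text{primitive}\}\le\frac{\varphi(rs)^2}{rs}\le rs,
\]
where the last two inequalities use that the primitive characters form a subset of all characters modulo $rs$ and that $\varphi(rs)\le rs$. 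Inserting this into the display from Step~1 and using $\mathcal{T}_4(y;rs)^2\ge 0$ gives $|\mathcal{K}_4(v,v,0;r,s,s)|\le\sqrt{r}\,s$, as required.

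I do not foresee a genuine obstacle. The two points that deserve a moment's care are the exactly $s$-to-one claim for the reduction map—which relies only on $rs^2$ and $rs$ having the same radical, so no squarefreeness hypothesis on $r$ or $s$ is needed—and keeping track of the normalising powers of $r$ and $s$ so that the final constant comes out precisely as $\sqrt{r}\,s$ rather than something larger.
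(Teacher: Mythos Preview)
Your proof is correct and follows essentially the same approach as the paper: both unfold $\mathcal{K}_4(v,v,0;r,s,s)$ as a nonnegative sum of $|\mathcal{T}_4|^2$, then apply orthogonality of characters together with $|\tau(\chi)|^2=rs$ for primitive $\chi$. The only cosmetic difference is that you first collapse the $x$-sum from $rs^2$ to $rs$ via the $s$-to-one reduction before invoking orthogonality, whereas the paper applies orthogonality directly over $(\mathbb{Z}/rs^2\mathbb{Z})^\times$ and picks up a $\varphi(rs^2)$ instead; since $\varphi(rs^2)=s\,\varphi(rs)$ the two computations agree line for line.
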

\begin{proof}
  For $(v,rs) \neq 1$ there is nothing to show because $\mathcal{K}_4(v,v,0; r, s,s) = 0$.
  Hence, assume that $(v,rs) = 1$. 
  Note that,
  $$
  \mathcal{K}_k(v,v,0; r, s, s) = \frac{1}{\sqrt{r} s} \sum_{x \pmod{r s^2}} | \mathcal{T}_4(v x; r s) |^2
  $$
  Recall that
  $$ \mathcal{T}_4(x; q) := \frac{1}{\sqrt{q}} \sum_{\substack{\chi \pmod{q} \\ \text{primitive}}} \frac{\tau(\chi)^4}{q^{2}} \chi(x).  $$
  Therefore, by orthogonality of characters,
  $$
  \mathcal{K}_k(v,v,0; r, s, s) = \frac{\varphi(r s^2)}{r^{3/2} s^2}  \sum_{\substack{\chi \pmod{r s} \\ \text{primitive}}} \Big | \frac{\tau(\chi)^{k}}{q^{k / 2}} \Big |^2  \leq \sqrt{r} s
  $$
  as claimed. 
\end{proof}

With all this information in place we are finally ready to bound exponential sums with $\mathcal{E}_{\pi}$.

\begin{lemma} \label{le:expfinal}
  Let $\pi$ be a unitary cuspidal automorphic representation of $\mathrm{GL}_{4}(\mathbb{A}_{\mathbb{Q}})$.
  Let $v_1, v_2 \in \mathbb{Z}$.
  Let $r, s_1, s_2$ be squarefree with $(r, s_1 s_2) = 1$, $(r s_1 s_2, N_{\pi}) = 1$.
  Write $d = (s_1, s_2)$ and set $s_i^{\star} := s_i / d$ for $i \in \{1, 2\}$. 
  For $\ell \in \mathbb{Z}$, define
  $$
  \mathcal{K}_{\pi}(v_1, v_2, \ell; r, s_1, s_2) := \frac{1}{\sqrt{r s_1 s_2}} \sum_{\substack{x \pmod{r s_1 s_2} \\ (x, r s_1 s_2) = 1}} \frac{\mathcal{E}_{\pi}(v_1 x; r s_1)}{\sqrt{r s_1}} \frac{\overline{\mathcal{E}_{\pi}}(v_2 x; r s_2)}{\sqrt{r s_2}} e \Big ( \frac{\ell x}{r s_1 s_2} \Big ).
  $$
  Then,
  \begin{align*}
  \mathcal{K}_{\pi} & (v_1, v_2, \ell; r, s_1, s_2) \\ & \ll \sqrt{N_{\pi}} \mathbf{1}_{d | \ell}
  \cdot \mathbf{1}_{(\ell, s_1^{\star} s_2^{\star}) = 1} \cdot \sqrt{d} \cdot C^{\omega(r s_1 s_2)} \sqrt{(\ell / d, r d, v_1 {s_1^{\star}}^4 - v_2 {s_2^{\star}}^4)}
  \end{align*}
  Also, for $(r,s) = 1$ with $(r s, N_{\pi}) = 1$ and $v \in \mathbb{Z}$, 
  $$
  |\mathcal{K}_{\pi} (v, v, 0; r, s, s)| \leq \sqrt{r} s. 
  $$
\end{lemma}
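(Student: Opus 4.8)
The plan is to deduce both inequalities from their analogues for $\mathcal{K}_4$, namely Lemma~\ref{le:complete4} and Lemma~\ref{le:complete5}, by inserting the explicit shape of the root number. First I would use \eqref{eq:stuff123}: since $(rs_i,N_\pi)=1$ one has $\mathcal{E}_\pi(v_1 x;rs_1)/\sqrt{rs_1}=c_\pi w_\pi(rs_1)\,\mathcal{T}_4(N_\pi\overline{v_1 x};rs_1)$, and, because $\mathcal{T}_k$ is real-valued for even $k$, also $\overline{\mathcal{E}_\pi}(v_2 x;rs_2)/\sqrt{rs_2}=\overline{c_\pi w_\pi(rs_2)}\,\mathcal{T}_4(N_\pi\overline{v_2 x};rs_2)$. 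Since $|c_\pi|=1$ and $w_\pi$ is unitary, the prefactor $c_\pi\,\overline{c_\pi}\,w_\pi(rs_1)\,\overline{w_\pi(rs_2)}$ is a constant of modulus $1$ independent of $x$, so $|\mathcal{K}_\pi(v_1,v_2,\ell;r,s_1,s_2)|$ equals the absolute value of the sum obtained from the definition of $\mathcal{K}_4(v_1,v_2,\ell;r,s_1,s_2)$ by replacing each argument $v_i x$ inside $\mathcal{T}_4$ with $N_\pi\overline{v_i x}$. (As always the sum vanishes unless $(v_1 v_2,rs_1 s_2)=1$, which we may therefore assume.)

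Next I would check that this replacement changes nothing: the whole chain behind Lemma~\ref{le:complete4}, i.e.\ Lemma~\ref{le:complete1}, Lemma~\ref{le:complete2}, Lemma~\ref{le:complete3}, goes through essentially verbatim, because $N_\pi$ is coprime to every modulus that occurs and inversion of the summation variable is harmless. The Chinese-remainder factorisation of the $x$-sum over the parts $s_1^{\star}$, $s_2^{\star}$, $rd$, $d$ of $rs_1 s_2$ is unaffected; only the innermost complete sums feel the change. In the part modulo $s_i^{\star}$, expanding $\mathcal{T}_4(N_\pi\overline{v_i x};s_i^{\star})$ into primitive characters produces $\overline\chi(x)$ in place of $\chi(x)$, so one meets $\sum_{x\bmod s_i^{\star}}\overline\chi(x)e(\ell x/s_i^{\star})=\chi(\ell)\tau(\overline\chi)$; using $\tau(\chi)\tau(\overline\chi)=\chi(-1)s_i^{\star}$ this turns $\tau(\chi)^4\tau(\overline\chi)$ into $\chi(-1)s_i^{\star}\tau(\chi)^3$, so the $\mathcal{T}_5$ factors produced in the proof of Lemma~\ref{le:complete1} become $\mathcal{T}_3$ factors, which are still $\ll C^{\omega}$ by Lemma~\ref{le:complete2}. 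In the part modulo $rd$, the identity $\mathcal{T}_4(N_\pi\overline m;p)=\tfrac{p-1}{p}K_4(\overline{N_\pi}m;p)-p^{-5/2}$ shows that, exactly as in the proof of Lemma~\ref{le:complete3}, one is reduced to sums $\tfrac{1}{\sqrt p}\sum_x K_4(a_1 x;p)K_4(a_2 x;p)e(\ell x/p)$ with $a_1,a_2$ units modulo $p$ --- products of two hyper-Kloosterman trace functions with linear arguments against an additive character --- to which \cite[Cor.~3.3]{SumsOfProducts} and Deligne's bound apply with the same numerology as before. Reassembling the pieces as in Lemma~\ref{le:complete4} produces $\mathbf{1}_{d\mid\ell}\,\mathbf{1}_{(\ell,s_1^{\star}s_2^{\star})=1}\,\sqrt d\,C^{\omega(rs_1 s_2)}\sqrt{(\ell/d,\,rd,\,v_1{s_1^{\star}}^4-v_2{s_2^{\star}}^4)}$, the $\gcd$ simplifying exactly as in Lemma~\ref{le:complete4} (using that $v_1,v_2,s_1^{\star},s_2^{\star}$ are units modulo $rd$); the factor $\sqrt{N_\pi}$ in the statement is crude slack recording the dependence on $\pi$.

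For the diagonal bound I would again use \eqref{eq:stuff123} together with $|c_\pi w_\pi(rs)|=1$ to write $\mathcal{K}_\pi(v,v,0;r,s,s)=\tfrac{1}{\sqrt r\,s}\sum_{x\bmod rs^2,\ (x,rs^2)=1}|\mathcal{T}_4(N_\pi\overline{vx};rs)|^2$, and then argue exactly as in Lemma~\ref{le:complete5}: as $x$ runs over the unit classes modulo $rs^2$ the residue $N_\pi\overline{vx}$ runs over the unit classes modulo $rs$, each hit $\varphi(rs^2)/\varphi(rs)=s$ times, so orthogonality of primitive characters gives $|\mathcal{K}_\pi(v,v,0;r,s,s)|\le\sqrt r\,s$.

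The step I expect to require the most care is the second one: verifying that the complete-exponential-sum computations of Lemmas~\ref{le:complete1}--\ref{le:complete3} genuinely survive replacing the linear argument $v_i x$ by the inverted argument $N_\pi\overline{v_i x}$ dictated by the root-number formula --- in particular that no new degenerate configuration of trace functions arises and that the arithmetic $\gcd$ factor comes out unchanged. This is bookkeeping rather than new input (since $N_\pi$ is coprime to all the relevant moduli, and inversion of the summation variable is a harmless change of variables), but it must be carried out with care.
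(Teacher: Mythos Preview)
Your approach is correct and is essentially the paper's own argument, which simply cites Lemma~\ref{le:complete4}, the relation \eqref{eq:stuff123}, and the real-valuedness of $\mathcal{T}_4$ without further comment. You are more explicit than the paper in checking that the inversion $v_i x \mapsto N_\pi\overline{v_i x}$ (which turns the $\mathcal{T}_5$ factors of Lemma~\ref{le:complete1} into $\mathcal{T}_3$ factors and linearises the arguments of the hyper-Kloosterman sums at primes dividing $rd$) leaves the bounds of Lemmas~\ref{le:complete1}--\ref{le:complete3} and the final $\gcd$ factor intact; the paper regards this bookkeeping as routine and your remark that $\sqrt{N_\pi}$ is pure slack is accurate.
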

\begin{proof}
  Without loss of generality we can assume that $(v_1 v_2, r s_1 s_2) = 1$, otherwise $\mathcal{K}_{\pi}(v_1, v_2, \ell; r, s_1, s_2) = 0$ and we are done. The first assertion follows Lemma \ref{le:complete4}, the observation that, 
  $$
  \frac{1}{\sqrt{q}} \mathcal{E}_{\pi}(v; q) = c_\pi w_{\pi}(q) \mathcal{T}_4(\overline{v} N_{\pi}; q)
  $$
  with $w_{\pi}(q)$ the central character of $\pi$ and $|c_{\pi}| = 1$ and the observation that
  $\mathcal{T}_{4}(v \overline{N_{\pi}}; q)$ is real-valued. The second assertion follows by using
  Lemma \ref{le:complete5} instead. 
\end{proof}

  \section{Proof of Proposition \ref{prop:second}}

Proposition \ref{prop:second} will follow quickly from the next Lemma. 

\begin{lemma}
  Let $\delta > 0$, $M,N \geq 1$ and $V$ be a smooth function compactly supported in $[1/100, 100]$.
  Let $\mathcal{S} \subset [S / 4, 4 S]$ be a set of integers such that all $s \in \mathcal{S}$ are square-free
  and have at most $\delta \log\log S + 20$ prime factors.
  Let $r$ be a square-free integer co-prime to all
  elements of $\mathcal{S}$, composed of at most $20$ distinct prime factors and contained in
  $[R / 4, 4R]$.  
  Suppose that $S > R^{10}$.
  Then, 
  \begin{align*}
  \sum_{s_1, s_2 \in \mathcal{S}} & \Big |\sum_{(m, r s_1 s_2) = 1} V \Big ( \frac{m}{M} \Big ) \frac{\mathcal{E}_{\pi}(\pm m; r s_1)}{\sqrt{r s_1}} \frac{\overline{\mathcal{E}_{\pi}}(\pm m; r s_2)}{\sqrt{r s_2}} \Big | \\ & \ll \Big ( (\log (R S))^{C \delta} \cdot S \sqrt{R} \cdot \Big  ( S (\log S)^{-1 + \delta + \delta \log \frac{1}{\delta}} \Big )^2 + M S (\log S)^{-1 + \delta + \delta \log \frac{1}{\delta}} \Big ) \| V \|_{\infty, 2}
  \end{align*}
  where
  $$ 
    \| V \|_{p, r} := \sum_{i = 0}^{r} \| V^{(i)} \|_{p}
  $$
\end{lemma}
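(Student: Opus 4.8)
The plan is as follows. Fix $s_1,s_2\in\mathcal S$ and set $L:=rs_1s_2$, $d:=(s_1,s_2)$ and $s_i^{\star}:=s_i/d$. I would first note that the summand vanishes unless $(m,L)=1$: indeed $\mathcal E_{\pi}(\pm m;rs_i)=\sum_{\chi}\varepsilon(\pi,\chi)\overline{\chi}(\pm m)$ vanishes as soon as $(\pm m,rs_i)\neq1$, and $(m,rs_1)=(m,rs_2)=1$ is equivalent to $(m,L)=1$. Hence, after inserting the compactly supported weight $V(m/M)$, the $m$-sum runs over all of $\mathbb Z$, and Poisson summation modulo $L$, combined with the definition of $\mathcal K_{\pi}$ in Lemma~\ref{le:expfinal}, yields
\begin{equation*}
\sum_{(m,L)=1}V\Bigl(\frac{m}{M}\Bigr)\frac{\mathcal E_{\pi}(\pm m;rs_1)}{\sqrt{rs_1}}\frac{\overline{\mathcal E_{\pi}}(\pm m;rs_2)}{\sqrt{rs_2}}=\frac{M}{\sqrt L}\sum_{\ell\in\mathbb Z}\widehat V\Bigl(\frac{M\ell}{L}\Bigr)\mathcal K_{\pi}(\pm1,\pm1,\ell;r,s_1,s_2).
\end{equation*}
Two integrations by parts give $|\widehat V(\xi)|\ll\|V\|_{\infty,2}(1+\xi^2)^{-1}$. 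I would then bound the double sum over $s_1,s_2$ term by term, splitting the $\ell$-sum into the diagonal $\ell=0$ and the off-diagonal $\ell\neq0$.

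For the diagonal: by the first assertion of Lemma~\ref{le:expfinal} the term $\mathcal K_{\pi}(\pm1,\pm1,0;r,s_1,s_2)$ carries the indicator $\mathbf{1}_{(0,s_1^{\star}s_2^{\star})=1}=\mathbf{1}_{s_1^{\star}s_2^{\star}=1}=\mathbf{1}_{s_1=s_2}$, so only the diagonal pairs $s_1=s_2=:s$ contribute, and for those the \emph{second} assertion of the lemma gives the sharp bound $|\mathcal K_{\pi}(\pm1,\pm1,0;r,s,s)|\leq\sqrt r\,s$. Since $|\widehat V(0)|\leq\|V\|_1\ll\|V\|_{\infty}$ (the support of $V$ has length $<100$), the entire $\ell=0$ contribution is $\ll\sum_{s\in\mathcal S}\frac{M}{s\sqrt r}\cdot\|V\|_{\infty}\cdot\sqrt r\,s\ll M|\mathcal S|\,\|V\|_{\infty}$. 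A standard count of squarefree integers up to $4S$ having at most $\delta\log\log S+20$ prime factors (Rankin's trick plus Mertens) gives $|\mathcal S|\ll S(\log S)^{-1+\delta+\delta\log\frac{1}{\delta}}$, which produces the second term in the asserted bound.

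For the off-diagonal: Lemma~\ref{le:expfinal} forces $d\mid\ell$, so write $\ell=d\ell'$ with $\ell'\neq0$; then $M\ell/L=M\ell'/Y$ with $Y:=L/d=rd\,s_1^{\star}s_2^{\star}$, and, since $\pi$ (hence $N_{\pi}$) is fixed,
\begin{equation*}
|\mathcal K_{\pi}(\pm1,\pm1,d\ell';r,s_1,s_2)|\ll C^{\omega(L)}\sqrt d\cdot\sqrt{(\ell',\,rd,\,(s_1^{\star})^4-(s_2^{\star})^4)}.
\end{equation*}
Bounding $\sqrt{(\ell',rd,n)}\leq\sum_{g\mid rd,\,g\mid\ell'}h(g)$ with $h$ multiplicative and $h(p)=\sqrt p-1$, and using $\sum_{\ell''\neq0}(1+(Mg\ell''/Y)^2)^{-1}\ll Y/(gM)$ uniformly, the $\ell\neq0$ part of the Poisson identity is bounded, for fixed $s_1,s_2$, by
\begin{equation*}
\ll\|V\|_{\infty,2}\cdot\frac{M}{\sqrt L}\cdot C^{\omega(L)}\sqrt d\cdot\frac{Y}{M}\sum_{g\mid rd}\frac{h(g)}{g}\ll\|V\|_{\infty,2}\cdot(2C)^{\omega(L)}\cdot\frac{\sqrt L}{\sqrt d},
\end{equation*}
using $\sum_{g\mid rd}h(g)/g\leq2^{\omega(rd)}$ and $\sqrt d\cdot Y/\sqrt L=\sqrt L/\sqrt d$. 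As $\omega(L)\leq\omega(r)+\omega(s_1)+\omega(s_2)\ll\delta\log\log S$ and $\sqrt L/\sqrt d\leq\sqrt L=\sqrt{rs_1s_2}\ll\sqrt R\,S$, summing over $s_1,s_2\in\mathcal S$ and invoking the count of $\mathcal S$ once more gives $\ll\|V\|_{\infty,2}(\log RS)^{C\delta}S\sqrt R\bigl(S(\log S)^{-1+\delta+\delta\log\frac{1}{\delta}}\bigr)^2$, the first term in the asserted bound. Adding the two contributions completes the proof.

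The heavy lifting — the evaluation of the complete exponential sums $\mathcal K_{\pi}$ via Deligne's bound — is already done in the previous section, so what remains is essentially bookkeeping. The two points that require care are: first, that the diagonal $\ell=0$ is \emph{thin}, contributing $\ll M|\mathcal S|$ rather than $\ll M|\mathcal S|^2$ (this is what makes the diagonal term acceptable, and it relies on the sharp bound $|\mathcal K_{\pi}(v,v,0;r,s,s)|\leq\sqrt r\,s$ rather than the weaker estimate coming from the generic case); and second, the handling of the gcd factor $\sqrt{(\ell/d,rd,\ldots)}$ in the off-diagonal, which is harmless only because $rd$ has $O(\delta\log\log S)$ prime factors, so the divisor peel-off costs merely $(\log S)^{O(\delta)}$ and is absorbed into $(\log RS)^{C\delta}$.
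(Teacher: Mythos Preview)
Your proof is correct and follows essentially the same route as the paper: Poisson summation in $m$, then split into $\ell=0$ (which forces $s_1=s_2$ via the indicator $\mathbf{1}_{(0,s_1^{\star}s_2^{\star})=1}$ and is handled by the sharp diagonal bound $|\mathcal{K}_{\pi}(v,v,0;r,s,s)|\leq\sqrt{r}\,s$) and $\ell\neq0$ (handled by Lemma~\ref{le:expfinal}). The only cosmetic difference is in unpacking the gcd factor $\sqrt{(\ell/d,rd,\cdot)}$: the paper bounds it by $\sqrt{d}\cdot\sqrt{(r,\ell/d)}$ and sums over divisors of $r$ (using $\omega(r)\leq 20$), whereas you sum over divisors of $rd$; both losses are absorbed into $(\log RS)^{C\delta}$, and your version in fact avoids the extra $2^{\omega(s)}$ factor that the paper picks up when re-parametrising the $(s_1,s_2)$ sum by $d$.
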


\begin{remark} In the above $(\log (R S))^{C \delta}$ is the loss from applying Deligne's bound to integers having many prime factors.
  The term $S \mathcal{L}_{\delta} := S (\log S)^{-1 + \delta + \delta \log \frac{1}{\delta}}$ is an upper bound for the number of square-free integers in $[S / 4, 4 S]$ that have $\leq \delta \log\log S + 20$ distinct prime factors. 
  The term $ S \sqrt{R} \cdot ( S \mathcal{L}_{\delta} )^2$ corresponds to the bound for the off-diagonal. The term $M S \mathcal{L}_{\delta}$ accounts for the diagonal. 
  \end{remark}
\begin{proof}
  By Poisson summation,
  \begin{align*}
\sum_{(m, r s_1 s_2) = 1} & V \Big ( \frac{m}{M} \Big ) \frac{\mathcal{E}_{\pi}(\pm m; r s_1)}{\sqrt{r s_1}} \frac{\overline{\mathcal{E}_{\pi}}(\pm m; r s_2)}{\sqrt{r s_2}} \\ & = \frac{M}{r s_1 s_2} \sum_{\ell} \widehat{V} \Big ( \frac{\ell M}{r s_1 s_2} \Big ) \Big ( \sum_{(x, r s_1 s_2) = 1} \frac{\mathcal{E}_{\pi}(\pm x; r s_1)}{\sqrt{r s_1}} \frac{\overline{\mathcal{E}_{\pi}}(\pm x; r s_2)}{\sqrt{r s_2}} e \Big ( \frac{\ell x}{r s_1 s_2} \Big ) \Big ).
  \end{align*}
  \subsection{The case $\ell = 0$}
  This corresponds to
  $$
  \widehat{V}(0) \sum_{\substack{s_1, s_2 \in \mathcal{S} \\ d := (s_1, s_2) \\ s_i^{\star} := s_i / d}} \frac{M}{\sqrt{r s_1 s_2}} \cdot \frac{1}{\sqrt{r s_1 s_2}} \sum_{(x, r s_1 s_2) = 1} \frac{\mathcal{E}_{\pi}(\pm x; r s_1)}{\sqrt{r s_1}} \frac{\overline{\mathcal{E}_{\pi}}(\pm x; r s_2)}{\sqrt{r s_2}}.
  $$
  According to Lemma \ref{le:expfinal} (with $\ell = 0$), if this is non-zero then $1 = (0,s_1^{\star} s_2^{\star}) = s_1^{\star} s_2^{\star}$. However since $s_i^{\star} = s_i / d$ this implies that $s_1 = s_2$. Therefore the above simplifies to
  $$
  \widehat{V}(0) \sum_{s \in \mathcal{S}} \frac{M}{\sqrt{r} s} \cdot \Big ( \frac{1}{\sqrt{r} s} \sum_{\substack{x \pmod{r s^2} \\ (x, rs) = 1}} \Big | \frac{\mathcal{E}_{\pi}(\pm x; r s)}{\sqrt{r s}} \Big |^2 \Big ) .  
  $$
  Again, by Lemma \ref{le:expfinal} the exponential sum inside the brackets is $\leq \sqrt{r} s$. Therefore, the above expression is, 
$$
\ll \| V \|_{\infty} \cdot \frac{M}{\sqrt{R} S} \sum_{s_1 \in \mathcal{S}} \sqrt{R} S \ll \| V \|_{\infty} \cdot M S (\log S)^{-1 + \delta + \delta \log \frac{1}{\delta}},
$$
as needed. 

\subsection{The case $\ell \neq 0$}
In this case we aim to bound,
$$
\sum_{\substack{s_1, s_2 \in \mathcal{S} \\ d := (s_1, s_2) \\ s_i^{\star} := s_i / d}} \frac{M}{\sqrt{r s_1 s_2}} \sum_{\ell \neq 0} \Big | \widehat{V} \Big ( \frac{\ell M}{r s_1 s_2} \Big ) \Big | \cdot \Big | \mathcal{K}_{\pi}(\pm 1, \pm 1, \ell; r, s_1, s_2) |
$$
using the notation of Lemma \ref{le:expfinal}. By Lemma \ref{le:expfinal} this is bounded above by,
$$
\ll \sum_{\substack{s_1, s_2 \in \mathcal{S} \\ d := (s_1, s_2) \\ s_i^{\star} := s_i / d}} \frac{M}{\sqrt{r s_1 s_2}} \sum_{\substack{\ell \neq 0 \\ d | \ell}} \Big | \widehat{V} \Big ( \frac{\ell M}{r s_1 s_2} \Big ) \Big | \sqrt{d} C^{\omega(r s_1 s_2)} \sqrt{(\ell / d, r d, {s_1^{\star}}^4 - {s_2^{\star}}^4)}
$$
We notice that due to the assumption on the number of prime factors of $r, s_1, s_2$ we have,
$$
C^{\omega(r s_1 s_2)} \ll (\log (R S))^{C \delta}. 
$$
We bound, 
$$\sqrt{d} \cdot \sqrt{(\ell / d, r d, {s_1^{\star}}^4 - {s_2^{\star}}^4)} \leq d \cdot \sqrt{(r, \ell / d)} \leq d \sum_{\substack{f | r \\ d f | \ell}} \sqrt{f}$$
We then find,
\begin{align*}
  (\log (R S))^{C \delta} & \sum_{\substack{d s_1, d s_2 \in \mathcal{S} \\ (d, s_1 s_2) = 1}} \frac{M d}{\sqrt{r} S} \sum_{f | r} \sqrt{f} \sum_{\substack{\ell \neq 0 \\ d f | \ell}} \Big | \widehat{V} \Big ( \frac{\ell M}{r d^2 s_1 s_2} \Big ) \Big |
\end{align*}
The sum over $\ell \neq 0$ is then easily bounded; write $\ell = d f \ell'$ and use the bounds
$$
\Big | \widehat{V} \Big ( \frac{\ell' d f M}{r d^2 s_1 s_2} \Big ) \Big | \ll \| V \|_{\infty, 2} \cdot \begin{cases} 1 & \text{ if } |\ell'| \leq R S^2 / (d f M) \\ \frac{R^2 S^4}{d^2 f^2 M^2} \cdot \frac{1}{\ell'^2} & \text{ if } |\ell'| > R S^2 / (d f M).
\end{cases}
$$
Therefore the previous bound becomes
\begin{align*}
\ll \| V \|_{\infty, 2} \cdot (\log (R S))^{C \delta} \sum_{\substack{d s_1, d s_2 \in \mathcal{S} \\ (d, s_1 s_2) = 1}} \frac{M d}{\sqrt{r} S} \sum_{f | r} \sqrt{f} \cdot \frac{R S^2}{d f M}
\end{align*}
Since $\omega(r) \leq 20$ and $r$ is square-free, we have,
$$
\sum_{f | r} \frac{1}{\sqrt{f}} \leq 100. 
$$
It follows that the above expression is bounded by
$$
\| V \|_{\infty, 2} (\log (R S))^{C \delta} \sqrt{R} S \Big ( \sum_{\substack{d s \in \mathcal{S} \\ (d,s) = 1}} 1 \Big )^2
$$
It remains to notice that,
$$
\sum_{\substack{d s \in \mathcal{S} \\ (d,s) = 1}} 1 \leq \sum_{s \in \mathcal{S}} 2^{\omega(s)} \ll (\log S)^{\delta} S (\log S)^{-1 + \delta + \delta \log \frac{1}{\delta}}
$$
using the fact that $\omega(s) \leq \delta \log\log S + 20$. 
\end{proof}

We are now ready to prove Proposition \ref{prop:second}.

\begin{proof}[Proof of Proposition \ref{prop:second}]
Using the first Lemma, 
$$ 
\sum_{\substack{q \in \mathcal{Q} \\ \chi \pmod{q}}} \varepsilon(\pi, \chi) \chi(\pm 1) \sum_{m} \frac{\lambda_{\tilde{\pi}}(m) \overline{\chi}(m)}{\sqrt{m}} V \Big (\frac{m}{M} \Big ) = \sum_{m} \frac{\lambda_{\tilde{\pi}}(m)}{\sqrt{m}} V \Big ( \frac{m}{M} \Big ) \sum_{q \in \mathcal{Q}} \mathcal{E}_{\pi}(\pm m; q).
$$

Moreover by definition of $\mathcal{Q}$,
$$
\sum_{q \in \mathcal{Q}}  \mathcal{E}_{\pi}(\pm m; q) = \sum_{\substack{r \in \mathcal{R} \\ s \in \mathcal{S}}} \mathcal{E}_{\pi}(\pm m; r s).
$$
We can therefore re-write the previous sum as
$$
\sum_{r \in \mathcal{R}} \sqrt{r} \sum_{m} \frac{\lambda_{\tilde{\pi}}(m)}{\sqrt{m}} V \Big ( \frac{m}{M} \Big ) \sum_{s \in \mathcal{S}} \sqrt{s} \cdot \frac{\mathcal{E}_{\pi}(\pm m; r s)}{\sqrt{rs}}
$$
Ignoring the sum over $r \in \mathcal{R}$, applying Cauchy-Schwarz and the Rankin-Selberg bound to the sum over $m$ the above is less than
$$
\| V \|_{\infty}^{1/2} \cdot R^{3/2} \cdot \max_{r \in \mathcal{R}} \Big ( \sum_{m} V \Big ( \frac{m}{M} \Big ) \Big | \sum_{s \in \mathcal{S}} \sqrt{s} \cdot \frac{\mathcal{E}_{\pi}(\pm m; r s)}{\sqrt{rs}} \Big |^2 \Big )^{1/2}.
$$
Expanding the square and applying the previous Lemma we conclude with the following bound 
\begin{align*}
  & \ll \| V \|_{\infty, 2} \cdot R^{3/2} \cdot \Big ( (\log (R S))^{C \delta} R^{1/4} S^2 (\log S)^{-1 + \delta + \delta \log \frac{1}{\delta}} + \sqrt{M} S (\log S)^{- \frac{1}{2} + \frac{\delta}{2} + \frac{\delta}{2} \log \frac{1}{\delta}} \Big )
  \\ &  \asymp \| V \|_{\infty, 2} \cdot \Big ( (\log Q)^{C \delta} \cdot Q^2 (\log Q)^{-1 + \delta + \delta \log \frac{1}{\delta}} R^{-1/4}
  + \sqrt{M R} \cdot Q (\log Q)^{-\frac{1}{2} + \frac{\delta}{2} + \frac{\delta}{2} \log \frac{1}{\delta}} \Big )
\end{align*}
as claimed. 
\end{proof}

\section{Character sums \& Proof of Proposition \ref{prop:third}}

We collect in this section a number of ``generic'' estimates for character sums.

\begin{lemma} \label{le:largesieve}
  Let $\alpha(n)$ be a sequence supported in $[N / 4, 4N]$ with $N > 4$.
  Let $\mathcal{Q} \subset [Q / 16, 16 Q]$ be a set of moduli and $\mathcal{D} \subset [1, 16 Q]$ an arbitrary subset of the integer.
  Then,
  $$
  \sum_{\substack{ cd \in \mathcal{Q} \\ d \in \mathcal{D}}} \mu(c) \varphi(d) \sum_{n \equiv \pm 1 \pmod{d}} \frac{\alpha(n)}{\sqrt{n}} \ll Q \sqrt{N} (\log N)^{3/2} \Big ( \sum_{n} \frac{|\alpha(n)|^2}{n} \Big )^{1/2}.
  $$
\end{lemma}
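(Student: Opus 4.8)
The plan is to dualise: exchange the order of summation so that the arbitrary sequence $\alpha$ sits on the outside, apply Cauchy--Schwarz, and reduce everything to counting lattice points in an arithmetic progression. Writing $\pm 1$ for a fixed choice of sign (as elsewhere in the paper), I would first rearrange
$$
\sum_{\substack{cd\in\mathcal Q\\ d\in\mathcal D}}\mu(c)\varphi(d)\sum_{n\equiv\pm1\pmod d}\frac{\alpha(n)}{\sqrt n}
=\sum_{n}\frac{\alpha(n)}{\sqrt n}\,C(n),\qquad
C(n):=\sum_{\substack{d\in\mathcal D\\ d\mid n\mp1}}\varphi(d)M_d,\quad M_d:=\sum_{\substack{c\\ cd\in\mathcal Q}}\mu(c),
$$
and then Cauchy--Schwarz over $n$ in the support $[N/4,4N]$ of $\alpha$ reduces the lemma to the bound $\sum_{N/4\le n\le 4N}|C(n)|^{2}\ll Q^{2}N(\log N)^{3}$. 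The only inputs on $M_d$ are trivial: $M_d=0$ unless $d\le 16Q$, and for $d\le 16Q$ one has $|M_d|\le\#\{c:cd\in[Q/16,16Q]\}\le 32Q/d$, whence $\varphi(d)|M_d|\le 32Q$ for all $d$.

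Opening the square,
$$
\sum_{N/4\le n\le4N}|C(n)|^{2}\le (32Q)^{2}\sum_{d_1,d_2}\#\bigl\{n\in[N/4,4N]:\operatorname{lcm}(d_1,d_2)\mid n\mp1\bigr\}.
$$
The key point, and the place where the hypothesis $N>4$ enters, is that this count vanishes unless $L:=\operatorname{lcm}(d_1,d_2)\le 4N+1$, because for $L\ge 4N+2$ the progression $n\equiv\pm1\pmod L$ meets $[N/4,4N]$ only possibly at $n=1$, which is excluded since $N/4>1$; and when $L\le 4N+1$ the count is $\le 4N/L+1$. It then remains to estimate $\sum_{d_1,d_2\le 4N+1}\operatorname{lcm}(d_1,d_2)^{-1}$ and $\#\{(d_1,d_2):\operatorname{lcm}(d_1,d_2)\le 4N+1\}$. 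The first is $\ll(\log N)^{3}$: writing $d_i=ek_i$ with $e=(d_1,d_2)$ and $(k_1,k_2)=1$ gives $\operatorname{lcm}(d_1,d_2)=ek_1k_2$, so the sum is $\le\sum_{e\le 4N+1}e^{-1}\bigl(\sum_{k\le 4N+1}k^{-1}\bigr)^{2}$. The second is $\ll N(\log N)^{2}$, since $(d_1,d_2)\mapsto\bigl((d_1,d_2),\,d_1/(d_1,d_2),\,d_2/(d_1,d_2)\bigr)$ sends such pairs injectively to triples $(a,b,c)$ with $abc\le 4N+1$, and $\#\{(a,b,c):abc\le X\}=\sum_{m\le X}\tau_3(m)\ll X(\log X)^{2}$. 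Combining, $\sum_n|C(n)|^{2}\ll Q^{2}\bigl(N(\log N)^{3}+N(\log N)^{2}\bigr)\ll Q^{2}N(\log N)^{3}$, and Cauchy--Schwarz then gives the claimed bound.

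The genuinely delicate step is the one highlighted above: the ``$+1$'' in the lattice-point count produces a diagonal-type term $\sum_{d_1,d_2}1$ which is a priori divergent, and it is tamed precisely because the support restriction on $\alpha$ together with $N>4$ caps $\operatorname{lcm}(d_1,d_2)$ at $4N+1$. Everything else — the exchange of summation, Cauchy--Schwarz, the bound $\varphi(d)|M_d|\ll Q$, and the two elementary divisor-sum estimates — is routine; one could also route the non-principal part through the multiplicative large sieve after reducing to primitive characters, but dualising avoids the attendant bookkeeping with imprimitive characters.
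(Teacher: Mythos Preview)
Your proof is correct and is essentially the paper's argument written out in more detail: the paper too bounds the $c$-sum trivially by $O(Q/d)$, interchanges the $d$- and $n$-sums to get $\ll Q\sum_n |\alpha(n)| d(n\mp1)/\sqrt{n}$, and then applies Cauchy--Schwarz. Your opening of the square and the lcm count simply re-derives the standard estimate $\sum_{m\le X} d(m)^2 \ll X(\log X)^3$ that the paper invokes implicitly, and your observation about $N>4$ excluding $n=1$ matches the paper's remark that $\alpha(1)=0$ is essential.
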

\begin{remark}
  Notice that $\alpha(1) = 0$. This is important since in the case when $n \equiv \pm 1 \pmod{q}$ is chosen to mean $n \equiv 1 \pmod{q}$ the term $n = 1$ yields a main term of size
  $$
  \sum_{\substack{q \in \mathcal{Q} \\ \chi \pmod{q} \\ \text{primitive}}} 1.
  $$
  \end{remark}

\begin{proof}
  Replacing the condition $c d \in \mathcal{Q}$ by the condition $c d \in [Q / 16, 16 Q]$, and bounding trivially the above is
  $$
\ll Q \sum_{d \leq 16 Q} \frac{\varphi(d)}{d} \sum_{n \equiv \pm 1 \pmod{d}} \frac{|\alpha(n)|}{\sqrt{n}}
  $$
  Bounding $\varphi(d) \leq d$ and interchanging sums the above is
  $$
  \leq Q \sum_{n} \frac{|\alpha(n)|}{\sqrt{n}} \cdot d(n \mp 1).
  $$
  The final bound now follows from applying Cauchy-Schwarz.
\end{proof}

As an immediate Corollary (with $\mathcal{D} = [1, 16 Q]$) we record the following.

\begin{corollary}
  Let $\alpha(n)$ be a sequence supported in $[N / 4, 4 N]$ with $N > 10$. Let $\mathcal{Q} \subset [Q /16, 16 Q]$ be a set of moduli. Then,
  $$
  \sum_{\substack{q \in \mathcal{Q} \\ \chi \pmod{q} \\ \text{primitive}}} \chi(\pm 1) \sum_{n} \frac{\alpha(n) \chi(n)}{\sqrt{n}} \ll Q \sqrt{N} (\log N)^{3/2} \Big ( \sum_{n} \frac{|\alpha(n)|^2}{n} \Big )^{1/2}
  $$
\end{corollary}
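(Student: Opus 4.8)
The plan is to unfold the sum over primitive characters and then quote Lemma~\ref{le:largesieve} with $\mathcal{D} = [1,16Q]$. First I would interchange the order of summation, writing the left-hand side of the Corollary as
$$
\sum_{n} \frac{\alpha(n)}{\sqrt{n}} \sum_{q \in \mathcal{Q}} \sum_{\substack{\chi \pmod{q} \\ \text{primitive}}} \chi(\pm n),
$$
using the identity $\chi(\pm 1)\chi(n) = \chi(\pm n)$. The inner character sum vanishes unless $(n,q) = 1$, in which case the standard evaluation — obtained by writing a character modulo $q$ through its primitive inducing character modulo $d \mid q$ and Möbius-inverting the orthogonality relation for Dirichlet characters — gives $\sum_{\chi \pmod{q},\,\text{prim.}} \chi(\pm n) = \sum_{d \mid q,\, n \equiv \pm 1 \pmod{d}} \mu(q/d)\varphi(d)$. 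Substituting this back, writing $q = cd$, and interchanging sums once more recasts the left-hand side of the Corollary as
$$
\sum_{cd \in \mathcal{Q}} \mu(c)\,\varphi(d) \sum_{\substack{n \equiv \pm 1 \pmod{d} \\ (n, c) = 1}} \frac{\alpha(n)}{\sqrt{n}},
$$
where I have used that $n \equiv \pm 1 \pmod{d}$ already forces $(n,d) = 1$, so the condition $(n,q) = 1$ reduces to $(n,c) = 1$.

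This is precisely the quantity appearing in Lemma~\ref{le:largesieve} with $\mathcal{D} = [1,16Q]$, save for the coprimality restriction $(n,c) = 1$, which I would argue is harmless. The proof of Lemma~\ref{le:largesieve} only uses the triangle inequality (discarding both the structure of $\mathcal{Q}$ and the signs $\mu(c)$) followed by Cauchy--Schwarz; it therefore in fact bounds $\sum_{cd \in [Q/16,\,16Q]} \varphi(d) \sum_{n \equiv \pm 1 \pmod{d}} |\gamma(n)| n^{-1/2}$ by $Q \sqrt{N}(\log N)^{3/2} \big( \sum_n |\gamma(n)|^2/n \big)^{1/2}$ for \emph{any} sequence $\gamma$ supported in $[N/4, 4N]$. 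Since $|\alpha(n)\mathbf{1}_{(n,c)=1}| \leq |\alpha(n)|$, passing to absolute values inside and applying this with $\gamma = \alpha$ yields the claimed bound; equivalently, one simply invokes Lemma~\ref{le:largesieve} after dropping the nonnegative restriction $(n,c)=1$, which can only enlarge the already absolute-valued majorant produced in its proof. (Note that a direct appeal to the classical large sieve together with a Cauchy--Schwarz over characters would only give $Q(Q + \sqrt{N})(\sum_n |\alpha(n)|^2/n)^{1/2}$, which is weaker when $Q > \sqrt{N}$; this is why one routes through Lemma~\ref{le:largesieve}.)

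I expect the only genuine subtlety to be the bookkeeping around $(n,q) = 1$: one must not apply the divisor identity for all $n$, because when $(n,q) > 1$ the primitive-character sum is $0$ while the divisor sum on the right-hand side need not be, so the coprimality condition has to be carried along until the triangle inequality absorbs it. The remaining ingredients — the identity $\chi(\pm 1)\chi(n) = \chi(\pm n)$, the evaluation of $\sum_{\chi \text{ prim.}}\chi(\pm n)$, and the fact (already recorded in the remark after Lemma~\ref{le:largesieve}) that $\alpha(1) = 0$, so that the $n = 1$ term, which would otherwise contribute a main term $\sum_{q \in \mathcal{Q}} \sum_{\chi \pmod{q},\,\text{prim.}} 1$, does not occur — are entirely routine.
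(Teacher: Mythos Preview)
Your argument is correct and follows exactly the route the paper intends: the paper records this as ``an immediate Corollary (with $\mathcal{D} = [1,16Q]$)'' of Lemma~\ref{le:largesieve}, and your unfolding via the identity $\sum_{\chi \text{ prim.}}\chi(\pm n) = \sum_{cd=q,\ d\mid n\mp 1}\mu(c)\varphi(d)$ is precisely how one makes that deduction explicit. Your attention to the stray coprimality condition $(n,c)=1$ is a genuine refinement over the paper's one-line gloss---the identity does fail when $(n,q)>1$---and your resolution (absorbing it after passing to absolute values, exactly as the proof of Lemma~\ref{le:largesieve} does) is the right one.
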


We will also need the following estimate following from the Selberg-Delange method.

\begin{lemma}
  Let $\mathcal{Q}$ be a set of squarefree integers $\subset [Q / 16, 16 Q]$ that can be written as $p_1 p_2 m$ with $p_1 \sim P_1$, $p_2 \sim P_2$ and $m \sim Q / (P_1 P_2)$ having less than $\delta \log\log Q + 10$ prime factors all larger than $(\log Q)^{20000}$. We assume that $4 P_1 \leq 2 P_2 \leq (\log Q)^{20000}$. Then,
  $$
  \sum_{\substack{c d \in \mathcal{Q}}} \mu(c) \varphi(d) = Q^2 (\log Q)^{-1 + \delta + \delta \log \frac{1}{\delta} + o(1)}. 
  $$
  as $Q \rightarrow \infty$. 
\end{lemma}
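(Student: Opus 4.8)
The plan is to strip off the $\mu$–$\varphi$ weight by a Dirichlet-convolution identity, reducing everything to a counting problem, and then to evaluate that count by the Selberg--Delange method. For the first step, note that every $q\in\mathcal{Q}$ is squarefree, so a factorisation $q=cd$ automatically has $(c,d)=1$, and $\sum_{cd=q}\mu(c)\varphi(d)$ is the value at $q$ of the multiplicative function $\mu\star\varphi$; since $(\mu\star\varphi)(p)=\varphi(p)-1=p-2$ this gives
$$
\sum_{cd\in\mathcal{Q}}\mu(c)\varphi(d)=\sum_{q\in\mathcal{Q}}\prod_{p\mid q}(p-2)=\sum_{q\in\mathcal{Q}}q\prod_{p\mid q}\Big(1-\frac2p\Big).
$$
Every prime factor of $q\in\mathcal{Q}$ is $\geq P_1$ (and in the relevant application $P_1=(\log Q)^{\kappa\nu}\to\infty$), while $\omega(q)\leq\delta\log\log Q+12$, so $\prod_{p\mid q}(1-2/p)\asymp 1$ uniformly in $q\in\mathcal{Q}$ (in the application it is even $1+o(1)$). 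Combined with $q\asymp Q$ this yields $\sum_{cd\in\mathcal{Q}}\mu(c)\varphi(d)\asymp Q\,|\mathcal{Q}|$, so it suffices to prove $|\mathcal{Q}|=Q(\log Q)^{-1+\delta+\delta\log(1/\delta)+o(1)}$.

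Next I would decouple the three factors. Because $2P_1\leq P_2$ and $2P_2<(\log Q)^{20000}<P^-(m)$, the factorisation $q=p_1p_2m$ is unique for $q\in\mathcal{Q}$ and the constraints do not interact (automatically $p_1,p_2\nmid m$ and $p_1\neq p_2$). Hence
$$
|\mathcal{Q}|=\big(\pi(2P_1)-\pi(P_1)\big)\big(\pi(2P_2)-\pi(P_2)\big)\cdot N_m,\qquad M_0:=\frac{Q}{P_1P_2},
$$
where $N_m$ is the number of squarefree $m\sim M_0$ with $\omega(m)\leq\delta\log\log Q+10$ and $P^-(m)>(\log Q)^{20000}$. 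By the prime number theorem the first two factors are $P_i^{1+o(1)}=(\log Q)^{O(1)+o(1)}$, and since $P_1P_2=(\log Q)^{O(1)}$ one has $\log M_0=(1+o(1))\log Q$ and $\log\log M_0=\log\log Q+o(1)$; so everything reduces to $N_m$.

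To evaluate $N_m$ I would run the Selberg--Delange method. With $y:=(\log Q)^{20000}$ and $|z|$ bounded, set
$$
D_y(s,z):=\sum_{\substack{m\geq1,\ \mu^2(m)=1\\ P^-(m)>y}}\frac{z^{\omega(m)}}{m^s}=\prod_{p>y}\Big(1+\frac{z}{p^s}\Big)=\zeta(s)^z\,G_y(s,z),
$$
where $G_y(s,z)$ is holomorphic and non-vanishing in a standard zero-free neighbourhood of $s=1$ with $G_y(1,z)\asymp(\log y)^{-z}$. A Perron integral and the usual contour shift past $s=1$ give, uniformly for $z$ in a fixed disc, $\sum_{m\leq x,\ \mu^2(m)=1,\ P^-(m)>y}z^{\omega(m)}=\frac{x(\log x)^{z-1}}{\Gamma(z)}G_y(1,z)(1+o(1))$, and then a Cauchy integral in $z$ around the origin extracts, uniformly for $k\leq(2-\varepsilon)\log\log x$,
$$
\#\{m\leq x:\ \mu^2(m)=1,\ \omega(m)=k,\ P^-(m)>y\}\asymp\frac{x}{\log x}\cdot\frac{(\log\log x-\log\log y+O(1))^{k-1}}{(k-1)!}.
$$
Since $\log\log y=\log\log\log Q+O(1)=o(\log\log Q)$, the numerator is $(\log\log Q)^{k-1}(1+o(1))^k$; summing over $k\leq\delta\log\log Q+10$ the term $k=\lfloor\delta\log\log Q\rfloor$ dominates, and Stirling gives $(\log\log Q)^{k-1}/(k-1)!=(\log Q)^{\delta+\delta\log(1/\delta)+o(1)}$. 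Subtracting the counts at $x=2M_0$ and $x=M_0$ to pass to $m\sim M_0$ (legitimate as $\log M_0,\log\log M_0$ match those of $Q$ up to $1+o(1)$) yields $N_m=M_0(\log Q)^{-1+\delta+\delta\log(1/\delta)+o(1)}$. Feeding this back, $|\mathcal{Q}|=(\log Q)^{O(1)+o(1)}\cdot(Q/(P_1P_2))\cdot(\log Q)^{-1+\delta+\delta\log(1/\delta)+o(1)}=Q(\log Q)^{-1+\delta+\delta\log(1/\delta)+o(1)}$, and therefore $\sum_{cd\in\mathcal{Q}}\mu(c)\varphi(d)=Q^2(\log Q)^{-1+\delta+\delta\log(1/\delta)+o(1)}$.

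The only genuinely nontrivial point is the last step: obtaining the Selberg--Delange asymptotic uniformly for $z$ in a disc \emph{together} with the constraint $P^-(m)>y$, and verifying that this constraint — whose entire effect is the factor $G_y(1,z)\asymp(\log y)^{-z}$ — perturbs the exponent only by $o(1)$ because $\log\log y=o(\log\log Q)$. If one prefers to avoid the contour manipulations, the upper bound for $N_m$ follows from the elementary Hardy--Ramanujan--Landau estimate $\pi_k(x)\ll(x/\log x)(\log\log x+c)^{k-1}/(k-1)!$ summed over $k\leq\delta\log\log Q+10$, while a matching lower bound comes from taking $m$ to be a product of $\lfloor\delta\log\log Q\rfloor$ primes, one from each of suitably chosen disjoint dyadic intervals lying above $(\log Q)^{20000}$ with product $\asymp M_0$, counted via the prime number theorem; but the Selberg--Delange route is cleaner and is the one indicated by the remark preceding the lemma.
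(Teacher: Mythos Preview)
Your proof is correct and follows essentially the same route as the paper: both recognize $\sum_{cd=q}\mu(c)\varphi(d)=\prod_{p\mid q}(p-2)$, split off the $p_1,p_2$ factors by multiplicativity, and apply Selberg--Delange to the $m$-sum. The only cosmetic differences are that the paper keeps the weight $\mathfrak{C}(m)=\prod_{p\mid m}(p-2)$ inside the Selberg--Delange computation (rather than first reducing to a pure count via $\prod_{p\mid q}(1-2/p)\asymp1$), and that it extracts the constraint $\omega(m)\leq\delta\log\log Q+10$ via an inverse Laplace transform and the saddle-point method rather than a Cauchy integral in $z$ followed by Stirling; both routes give the same exponent $-1+\delta+\delta\log(1/\delta)$.
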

\begin{proof}
  By definition of $\mathcal{Q}$ we can write, 
  $$
  \sum_{\substack{c d \in \mathcal{Q}}} \mu(c) \varphi(d) = \sum_{\substack{\omega(m) \leq \delta \log\log Q + 10 \\ p | m \implies p > (\log Q)^{20000} \\ m \sim Q / (P_1 P_2)}} \mu^2(m) \sum_{\substack{p_1 \sim P_1 \\ p_2 \sim P_2}} \mathfrak{C}(p_1 p_2 m)
  $$
  where
  $$
  \mathfrak{C}(n) := \sum_{n = c d} \mu(c) \varphi(d). 
  $$
  Since $\mathfrak{C}(n)$ is multiplicative and $p_1, p_2$ and $m$ are mutually co-prime the above re-arranges into
  $$
  \Big ( \sum_{p_1 \sim P_1} \mathfrak{C}(p_1) \Big ) \cdot \Big ( \sum_{p_2 \sim P_2} \mathfrak{C}(p_2) \Big ) \cdot \Big ( \sum_{\substack{\omega(m) \leq \delta \log\log Q + 10 \\ p | m \implies p > (\log Q)^{20000} \\ m \sim Q / (P_1 P_2)}} \mu^2(m) \mathfrak{C}(m) \Big ). 
  $$
  Since $\mathfrak{C}(p) = p - 2$ the sum over $P_1$ and $P_2$ is respectively $\asymp P_1^2 (\log P_1)^{-1}$ and $\asymp P_2^2 (\log P_2)^{-1}$. To compute the sum over $m$ we use the Selberg-Delange method. We quickly sketch the proof without giving the full details, since those can be found in any standard textbook e.g \cite{Ten15}. Let $M := Q / (P_1 P_2)$ and $w = (\log Q)^{20000}$.
  We note that $\log\log M = \log\log Q + o(1)$. First by contour integration, for $z$ with $\Re z = \log \frac{1}{\delta}$, we have, 
  $$
  \sum_{\substack{m \sim M \\ p | m \implies p > w}} \mathfrak{C}(m) e^{-z \omega(m)} = \frac{A(z) M^2}{\log M} \cdot \exp \Big ( (\log\log M - \log \log w) e^{-z} \Big ) + O \Big ( M^2 (\log M)^{\delta - 2} \Big ),
  $$
  with $A(z)$ an analytic function with $A(x) > 0$ for $x \geq 0$. 
  Subsequently we apply the inverse Laplace transform,
  $$
\mathbf{1} \Big ( \omega(m) \leq \delta \log\log Q + 10 \Big ) =  \frac{1}{2\pi i} \int_{\Re z = \log \frac{1}{\delta}} e^{z (\delta \log\log Q + 10) - z \omega(m)} \cdot \frac{dz}{z}
$$
and using the saddle-point method we obtain that the sum over $m$ is
$$
\asymp A \Big ( \log \frac{1}{\delta} \Big ) M^2 (\log M)^{-1 + \delta + \delta \log \frac{1}{\delta}} \cdot \Big ( \log \frac{\log M}{\log w} \Big )^{-1/2} \cdot (\log w)^{-\delta}
$$
This gives the claim since $\log w \asymp \log\log Q$. 

  \end{proof}

We are now ready to prove Proposition \ref{prop:third}.
\begin{proof}

  If $n = 1$ falls outside of the support of $V(n / N)$ then the result follows from Lemma \ref{le:largesieve}. In the other case since $q \in \mathcal{Q}$ has all of its prime factors larger than $1000$\footnote{we are assuming without loss of generality that $Q$ is taken sufficiently large} if $ c d \in \mathcal{Q}$ then $d = 1$ or $d > 500$. 
  Therefore, if $n = 1$ is in the support of $V(n / N)$,
  \begin{align*}
  \sum_{\substack{q \in \mathcal{Q} \\ \chi \pmod{q} \\ \text{primitive}}} \chi(\pm 1) \sum_{n} \frac{\lambda_{\pi}(n) \chi(n)}{\sqrt{n}} & V \Big ( \frac{n}{N} \Big ) = \sum_{\substack{c \in \mathcal{Q}}} \mu(c) \sum_{n} \frac{\lambda_{\pi}(n)}{\sqrt{n}} V \Big ( \frac{n}{N} \Big ) \\ & + \sum_{\substack{cd \in \mathcal{Q} \\ d > 500}} \mu(c) \varphi(d) \sum_{n \equiv \pm 1 \pmod{d}} \frac{\lambda_{\pi}(n)}{\sqrt{n}} V \Big ( \frac{n}{N} \Big )
  \end{align*}
  Notice that in the case when $\pm 1 = -1$ the second term is zero while the first term is $\ll_{\pi} Q \| V \|_{\infty}$. We can therefore assume that $\pm 1 = 1$. Therefore the above amounts to 
  \[
  \sum_{c \in \mathcal{Q}} \mu(c) \sum_{n} \frac{\lambda_{\pi}(n)}{\sqrt{n}} V \Big ( \frac{n}{N} \Big ) + V \Big ( \frac{1}{N} \Big ) \sum_{\substack{c d \in \mathcal{Q} \\ d > 500}} \mu(c) \varphi(d). 
  \]
  This can be further re-written as
  \[
  \sum_{c \in \mathcal{Q}} \mu(c) \sum_{n \neq 1} \frac{\lambda_{\pi}(n)}{\sqrt{n}} V \Big ( \frac{n}{N} \Big ) + V \Big ( \frac{1}{N} \Big )  \sum_{c d \in \mathcal{Q}} \mu(c) \varphi(d). 
  \]
  Bounding everything trivially we now find that the first term is $\ll_{\pi} Q \| V \|_{\infty}$ while by the Selberg-Delange method, 
  \[
  \sum_{c d \in \mathcal{Q}}  \mu(c)\varphi(d) = Q^2 (\log Q)^{-1 + \delta + \delta \log \frac{1}{\delta} + o(1)}, 
  \]
  thus giving the claim.  
  \end{proof}

\section{Roadmap and deduction of Proposition \ref{prop:first}}

The proof of Proposition \ref{prop:first} is a little bit more involved. We state now three Propositions from which it follows. Their proofs occupy the remainder of the paper. In the first Proposition we bound the contribution of integers that are nearly primes
(i.e small factor times a product of a few large primes), the bound is essentially trivial but as accurate as possible.

\begin{restatable}[]{proposition}{propexceptional} \label{prop:exceptional}
  Let $\pi$ be a unitary cuspidal automorphic representation of $\mathrm{GL}_{4}(\mathbb{A}_{\mathbb{Q}})$.
  Let $0 < \nu, \delta < 1/1000$.
  Let $\mathcal{Q} \subset [Q / 16, 16 Q]$ be a set of squarefree integers such that all $q \in \mathcal{Q}$ have $\leq \delta \log\log Q + 20$ distinct prime factors.  Let $\mathcal{E} \subset [N / 16, 16 N]$ denote the
  set of integers without prime factors in the range
  $[\exp(\log^{\nu} N), N^{1/1000}]$. Then, for any smooth function $V$ compactly supported in $[1/100, 100]$ and $N$ in the range $Q^2 (\log Q)^{-10^9} \leq N \leq Q^2 (\log Q)^{10^9}$ we have,
  \begin{align*}
  \sum_{\substack{c d \in \mathcal{Q}}} \mu(c) \varphi(d) &
  \sum_{\substack{n \in \mathcal{E} \\ n \equiv \pm 1 \pmod{d}}} \frac{\lambda_{\pi}(n) \chi(n)}{\sqrt{n}} V \Big ( \frac{n}{N} \Big ) \\ & \ll e^{C / \delta} \cdot \frac{\sqrt{N} Q}{(\log Q)^{3/2}} \cdot (\log Q)^{4 \nu + \delta \log \frac{1}{\delta} + 10 \delta} \cdot (\| V \|_{\infty} + \| V' \|_{\infty}).
  \end{align*}
  with $C > 10$ an absolute constant.
\end{restatable}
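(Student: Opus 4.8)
The plan is to estimate this sum trivially, but carefully enough to keep track of every power of $\log$. Write $L:=\log Q$ (so $\log N\asymp 2L$ and $\log z\asymp L$, where $z:=N^{1/1000}$) and $y:=\exp((\log N)^{\nu})$. First I would open the congruence: collecting the $c$–sum into $\beta(d):=\sum_{c:\,cd\in\mathcal Q}\mu(c)$ and bounding $|V(n/N)|\le\|V\|_{\infty}$ reduces the left‐hand side to
\[
\ll\frac{\|V\|_{\infty}}{\sqrt N}\sum_{d}|\beta(d)|\,\varphi(d)\!\!\sum_{\substack{n\in\mathcal E,\ n\sim N\\ n\equiv\pm1\ (d)}}\!\!|\lambda_{\pi}(n)| .
\]
Here $d$ runs over squarefree divisors of elements of $\mathcal Q$, and the Sathe--Selberg estimate for the number of squarefree integers up to $16Q$ with at most $\delta\log\log Q+20$ prime factors (already used in Section~4) gives $\sum_{d}|\beta(d)|\le\sum_{q\in\mathcal Q}2^{\omega(q)}\ll Q\,L^{-1+\delta+\delta\log\frac1\delta+o(1)}$. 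So it suffices to show $T(d):=\sum_{n\in\mathcal E,\,n\sim N,\,n\equiv\pm1(d)}|\lambda_{\pi}(n)|\ll \frac{N}{\varphi(d)}L^{\nu-1}$ (or better) uniformly for $d\le 16Q\asymp\sqrt N$; the two gains $L^{-1}$ — one from $\sum_d|\beta(d)|$ and one inside $L^{\nu-1}$ — are what produce the decisive $L^{-3/2}$ (the displayed exponent $4\nu$ is comfortable slack, and $\|V'\|_{\infty}$ enters only through a routine smoothing in the large‑sieve step below).

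I would split $\mathcal E$ according to the size of the smooth part $n_0(n):=\prod_{p^{a}\|n,\ p\le y}p^{a}$. If $n_0(n)>N^{0.49}$ then $n$ has a $y$–smooth divisor larger than $N^{0.49}$; since $\Psi(X,y)\ll X L^{-A}$ for every $A$ (because $u=\log X/\log y\gg(\log N)^{1-\nu}$), the number of such $n\sim N$ is $\ll NL^{-A}$, and combined with the Rankin--Selberg bound $\sum_{n\le X}|\lambda_{\pi}(n)|^{4}\ll X\log X$ this gives $\sum_{n\sim N,\,n_0(n)>N^{0.49}}|\lambda_{\pi}(n)|^{2}\ll NL^{-A}$. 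Feeding the truncated coefficients $\alpha(n):=\lambda_{\pi}(n)\mathbf 1_{n\in\mathcal E}\mathbf 1_{n_0(n)>N^{0.49}}$ (weighted by $1/\sqrt n$) into Lemma~\ref{le:largesieve} bounds this part by $\ll Q\sqrt N\,L^{3/2}(L^{-A})^{1/2}\ll Q\sqrt N\,L^{-100}$, which is negligible. This is exactly the regime where the $(\log N)^{3/2}$ loss of Lemma~\ref{le:largesieve} is harmless, since the $\ell^{2}$‑mass of $\lambda_{\pi}$ on very smooth integers is super‑polynomially small.

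The main case is $n_0(n)\le N^{0.49}$. Discarding the $O(N^{0.999})$ contribution of $n$ divisible by the square of a prime exceeding $z$, every such $n$ factors as $n=n_0n_1$ with $n_0$ $y$–smooth, $n_1$ squarefree with all prime factors $>z$ (hence $\omega(n_1)\le1000$), and $(n_0,n_1)=1$, so $\lambda_{\pi}(n)=\lambda_{\pi}(n_0)\lambda_{\pi}(n_1)$. I would then prove two estimates uniform in $d\le\sqrt N$: (i) a one–dimensional sieve (Selberg's sieve / the fundamental lemma) gives $\#\{n\sim N:\ n\equiv a(d),\ n\in\mathcal E\}\ll\frac N{\varphi(d)}\cdot\frac{\log y}{\log z}\asymp\frac N{\varphi(d)}L^{\nu-1}$, the sieve remainder being $\ll(N/d)^{1/2}L^{2}$, dominated by the main term for $d\le\sqrt N$; and (ii) $\sum_{n\in\mathcal E,\ n_0(n)\le N^{0.49},\ n\sim N,\ n\equiv a(d)}|\lambda_{\pi}(n)|^{2}\ll\frac N{\varphi(d)}L^{\nu-1}$. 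For (ii) I would fix $n_0$ and apply Shiu's theorem in arithmetic progressions to the non‑negative multiplicative function $n_1\mapsto\mu^{2}(n_1)\mathbf 1_{\{p\mid n_1\Rightarrow p>z\}}\prod_{p\mid n_1}\min(|\lambda_{\pi}(p)|^{2},(\log N)^{20})$; Shiu's theorem is uniform for moduli up to a fixed power of the length of the range, and the range $n_1\sim N/n_0$ has length $\ge N^{0.51}>\sqrt N\ge d$, so the uniformity covers all our $d$. Since $\sum_{z<p\le N}|\lambda_{\pi}(p)|^{2}/p=O(1)$ (because $\log N/\log z=O(1)$ and $\sum_{p\le t}|\lambda_{\pi}(p)|^{2}\sim\pi(t)$ by Rankin--Selberg for $\pi\times\tilde\pi$), Shiu yields $\sum_{n_1}|\lambda_{\pi}(n_1)|^{2}\ll\frac{N/n_0}{\varphi(d)\log z}$, the truncation error being controlled by $\sum_{z<p\le N}|\lambda_{\pi}(p)|^{4}/p=O(1)$ (Rankin--Selberg for $(\pi\times\tilde\pi)\times(\pi\times\tilde\pi)$, using $|\lambda_{\pi}(p)|^{4}=a_{(\pi\times\tilde\pi)\times(\pi\times\tilde\pi)}(p)$) together with the bound $|\lambda_{\pi}(p)|\ll p^{\theta}$, $\theta<\tfrac12$ (Luo--Rudnick--Sarnak), which handles the rare primes with $|\lambda_{\pi}(p)|>(\log N)^{10}$. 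Summing over $n_0$ with $\sum_{n_0\ y\text{-smooth}}|\lambda_{\pi}(n_0)|^{2}/n_0\ll(\log y)^{O(1)}=L^{O(\nu)}$ (which in turn uses $\sum_{p\le y}|\lambda_{\pi}(p^{k})|^{2}/p^{k}=O(1)$ for $k\ge2$, via the Hecke relations and $\sum_{p\le t}|\lambda_{\wedge^{j}\pi}(p)|^{2}\ll t/\log t$) gives (ii). Cauchy--Schwarz combines (i) and (ii) into $T(d)|_{n_0\le N^{0.49}}\le(\mathrm{(i)})^{1/2}(\mathrm{(ii)})^{1/2}\ll\frac N{\varphi(d)}L^{\nu-1}$; multiplying by $\varphi(d)|\beta(d)|$, summing, and inserting the bound on $\sum_d|\beta(d)|$ gives $\ll \|V\|_{\infty}\sqrt N\,Q\,L^{\nu-2+\delta\log\frac1\delta+O(\delta)+o(1)}$, which lies well inside the claimed bound (the $e^{C/\delta}$ absorbs the $\delta$–dependent constants from Shiu's theorem, from the count of $\mathcal Q$, and from the Euler products over the smooth part).

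I expect the main obstacle to be securing the uniformity in $d$ in estimates (i) and (ii): the modulus runs up to $\approx\sqrt N$, at or beyond the reach of Siegel--Walfisz–type input for $\pi\times\tilde\pi$, so the argument must be set up so that the effective level of distribution is the length $N/n_0\ge N^{0.51}$ of the almost‑prime sum rather than $N$ itself, with the rare complementary range (large smooth part) siphoned off and handled by the large sieve as in the second paragraph. A secondary, pervasive nuisance is the lack of Ramanujan for $\pi$: every multiplicative function built from $\lambda_{\pi}$ that occurs ($|\lambda_{\pi}(p^{k})|$, $|\lambda_{\pi}(p)|^{2m}$) is unbounded and must be tamed by truncation plus Rankin--Selberg moment bounds for $\pi\times\tilde\pi$ and related lifts, and it is essential that $\theta<\tfrac12$ strictly.
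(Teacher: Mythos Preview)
Your overall outline is reasonable and the treatment of the ``large smooth part'' via the large sieve mirrors the paper's, but there is a genuine gap in your step (ii): you need
\[
\sum_{\substack{n_1\sim X\\ n_1\equiv a\ (d)\\ p\mid n_1\Rightarrow p>z}} |\lambda_{\pi}(n_1)|^2 \ \ll\ \frac{X}{\varphi(d)\log z}
\]
uniformly for $d$ up to $\asymp\sqrt N$, and you justify this via Shiu's theorem. Shiu's hypothesis requires $f(p)\le C$ for a fixed constant; $|\lambda_{\pi}(p)|^2$ does not satisfy this, and truncating at $(\log N)^{20}$ makes the implicit constant in Shiu depend on that growing threshold. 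Your proposed control of the truncation error appeals to $\sum_{z<p\le N}|\lambda_{\pi}(p)|^4/p=O(1)$ via ``Rankin--Selberg for $(\pi\times\tilde\pi)\times(\pi\times\tilde\pi)$''. For a general unitary cuspidal $\pi$ on $\mathrm{GL}_4(\mathbb{A}_{\mathbb{Q}})$ neither the automorphy of $\pi\times\tilde\pi$ on $\mathrm{GL}_{16}$ nor, a fortiori, of its Rankin--Selberg square is known; in fact the remark after Proposition~\ref{prop:first} in the paper singles out the much weaker assumption $\sum_{p\le x}|\lambda_{\pi}(p)|^{2+\eta}\ll x/\log x$ as an \emph{additional} hypothesis, only available for special $\pi$. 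The same issue infects your bound $\sum_{n\le X}|\lambda_\pi(n)|^4\ll X\log X$ used for the smooth tail. So as written, (ii) --- and hence the bound $T(d)\ll N\varphi(d)^{-1}L^{\nu-1}$ --- is not established for general $\pi$.

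The paper avoids this problem by placing Cauchy--Schwarz differently. After decomposing $n=ab$ with $a$ small and $y$-smooth and $b$ having all prime factors $>N^{1/1000}$, it keeps the full weight $W(b):=\sum_{cd\in\mathcal{Q},\,d\mid ab\mp1}\mu(c)\varphi(d)$ attached to $b$ and applies Cauchy--Schwarz in $b$:
\[
\sum_{b\sim B}\frac{\lambda_\pi(b)}{b^{1/2+s}}\,W(b)\ \le\ \frac{1}{\sqrt B}\Big(\sum_{b\in\mathcal B,\,b\sim B}|\lambda_\pi(b)|^2\Big)^{1/2}\Big(\sum_{b\sim B}|W(b)|^2\Big)^{1/2}.
\]
The first factor carries \emph{no} congruence condition and is bounded by Lemma~\ref{le:sieve} (Soundararajan--Thorner): $\sum_{b\in\mathcal B,\,b\sim B}|\lambda_\pi(b)|^2\ll B/\log B$. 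The second factor is pure counting, with no $\lambda_\pi$-weights; one bounds $|W(b)|\ll Q\cdot\#\{d\mid ab\mp1:\ \mu^2(d)=1,\ \omega(d)\le\delta\log\log Q+20\}$, uses the binomial estimate of Lemma~\ref{le:binomial} to control this divisor count on the typical set $\omega(ab\mp1)<100\log\log Q$, and then peels off one copy of $W$ and applies Brun--Titchmarsh to $\#\{b\sim B:\ b\in\mathcal B,\ d\mid ab\mp1\}\ll B/(\varphi(d)\log Q)$. The point is that this separates the second moment of $\lambda_\pi$ from the arithmetic progression entirely, so one never needs $|\lambda_\pi|^2$ (let alone $|\lambda_\pi|^4$) in progressions to large moduli. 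If you reorganise your Cauchy--Schwarz in this way, your argument goes through.
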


Once the contribution of the primes is discarded the remaining integers are composite and therefore naturally endowed with a bilinear structure. This is the object of the next Proposition.

\begin{restatable}[]{proposition}{propreduction} \label{prop:reduction}%
Let $\pi$ be a unitary cuspidal automorphic representation of $\mathrm{GL}_4(\mathbb{A}_{\mathbb{Q}})$.
  Let $0 < \nu < 1/1000$.
  Let $\mathcal{Q} \subset [ Q / 16, 16 Q]$ be a set of squarefree integers such that all $q \in \mathcal{Q}$ have at most $\delta\log\log Q + 20$ distinct prime factors. Let $\mathcal{P}_{\nu}(N)$ denote the set of primes in $[\exp(\log^{\nu} N), N^{1/1000}]$. Let $V$ be a smooth function compactly supported in $[1/100, 100]$.
   Then, for all
   \[
   \frac{Q^2}{(\log Q)^{10^9}} \leq N \leq Q^2 (\log Q)^{10^9},
   \]
   we have,
  \begin{align*}
   \sum_{\substack{q \in \mathcal{Q} \\ \chi \pmod{q} \\ \text{primitive}}} \chi(\pm 1) \sum_{n} & \frac{\lambda_{\pi}(n) \chi(n)}{\sqrt{n}} V \Big ( \frac{n}{N} \Big ) = \sum_{\substack{c d \in \mathcal{Q}}} \mu(c) \varphi(d)  \sum_{\substack{(m, p) = 1 \\ m \geq 1, \ p \in \mathcal{P}_{\nu}(N) \\ p m \equiv \pm 1 \pmod{d}}} \frac{\lambda_{\pi}(p) \alpha(m)}{\sqrt{p m}} V \Big ( \frac{p m}{N} \Big ) \\ & \ \ \ \ \ \  + O \Big ( e^{C / \delta} \cdot \frac{Q \sqrt{N}}{(\log N)^{3/2}} \cdot (\log N)^{4\nu + 10 \delta + \delta \log \frac{1}{\delta}} \cdot (\| V \|_{\infty} + \| V' \|_{\infty}) \Big ).
  \end{align*}
  with $C > 10$ an absolute constant and
  where
  $$
\alpha(m) := \frac{\lambda_{\pi}(m) \mu_{\mathcal{P}}^2(m)}{1 + \omega(m; \mathcal{P}_{\nu}(N))}
$$
where $\omega(m; \mathcal{P}_{\nu}(N))$ counts the number of distinct prime factors of $m$ that belong to $\mathcal{P}_{\nu}(N)$ and $\mu_{\mathcal{P}}^2(n)$ is the indicator function of integers $n$ not divisible by $p^2$ with $p \in \mathcal{P}_{\nu}(N)$.
\end{restatable}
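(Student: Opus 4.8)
The proof is essentially a combinatorial reorganisation of the character sum; all the analytic content is supplied by Proposition~\ref{prop:exceptional} and one standard Rankin--Selberg convergence fact. First I would detect primitivity of $\chi$ by M\"obius inversion. Writing $q = cd$ and using
\[
\sum_{\substack{\chi \pmod{q} \\ \text{primitive}}} \chi(\pm 1)\chi(n) = \sum_{cd = q} \mu(c)\varphi(d)\,\mathbf{1}_{n \equiv \pm 1 \pmod{d}},
\]
the left-hand side of the Proposition equals
\[
\sum_{cd \in \mathcal{Q}} \mu(c)\varphi(d) \sum_{n \equiv \pm 1 \pmod{d}} \frac{\lambda_{\pi}(n)}{\sqrt{n}}\, V\Big(\frac{n}{N}\Big).
\]
I would then split the inner sum according to $\omega(n;\mathcal{P}_{\nu}(N))$, the number of distinct prime factors of $n$ lying in $\mathcal{P}_{\nu}(N)$. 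The terms with $\omega(n;\mathcal{P}_{\nu}(N)) = 0$ are exactly the $n$ in the set $\mathcal{E}$ of Proposition~\ref{prop:exceptional}, so their total contribution is already the error term (one uses $\log N \asymp \log Q$ to reconcile the two displayed error estimates, the bounded factors produced being absorbed into $e^{C/\delta}$).

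For the terms with $\omega(n;\mathcal{P}_{\nu}(N)) \geq 1$ I would separate the cases $\mu_{\mathcal{P}}^2(n) = 1$ and $\mu_{\mathcal{P}}^2(n) = 0$. In the first case every prime of $\mathcal{P}_{\nu}(N)$ dividing $n$ does so to the first power, and the elementary identity
\[
\lambda_{\pi}(n)\,\mu_{\mathcal{P}}^2(n)\,\mathbf{1}_{\omega(n;\mathcal{P}_{\nu}(N)) \geq 1} = \sum_{\substack{n = pm,\ p \in \mathcal{P}_{\nu}(N) \\ (m,p) = 1}} \lambda_{\pi}(p)\,\alpha(m)
\]
holds: the admissible factorisations $n = pm$ are in bijection with the primes of $\mathcal{P}_{\nu}(N)$ dividing $n$, hence there are $\omega(n;\mathcal{P}_{\nu}(N))$ of them; for each one $\mu_{\mathcal{P}}^2(m) = 1$, $\omega(m;\mathcal{P}_{\nu}(N)) = \omega(n;\mathcal{P}_{\nu}(N)) - 1$, and $\lambda_{\pi}(n) = \lambda_{\pi}(p)\lambda_{\pi}(m)$ since $(p,m) = 1$, so each summand equals $\lambda_{\pi}(n)/\omega(n;\mathcal{P}_{\nu}(N))$ and the identity follows. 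Inserting it, and noting that $n \equiv \pm 1 \pmod{d}$ already forces $(n,d) = 1$ (so $p \nmid d$ and no spurious coprimality condition is introduced), turns this piece into precisely the claimed bilinear main term $\sum_{cd \in \mathcal{Q}} \mu(c)\varphi(d) \sum \frac{\lambda_{\pi}(p)\alpha(m)}{\sqrt{pm}} V(pm/N)$.

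It remains to bound the contribution of $n$ with $\omega(n;\mathcal{P}_{\nu}(N)) \geq 1$ and $\mu_{\mathcal{P}}^2(n) = 0$, i.e. those divisible by $p^2$ for some $p \in \mathcal{P}_{\nu}(N)$. Here I would apply the trivial estimate of Lemma~\ref{le:largesieve} to the sequence $\alpha(n) = \lambda_{\pi}(n)\,\mathbf{1}_{\exists\, p \in \mathcal{P}_{\nu}(N):\, p^2 \mid n}\, V(n/N)$, reducing everything to the bound $\sum_{n} |\alpha(n)|^2/n \ll (\log N)^{-A}$ for any fixed $A$. Writing $n = p^2\ell$ and then pulling out the $p$-part of $\ell$, this follows from the convergence of $\sum_{p}\sum_{b \geq 2} |\lambda_{\pi}(p^b)|^2 p^{-bs}$ for $\Re s > 1/2$ --- a consequence of the factorisation of $\sum_n |\lambda_{\pi}(n)|^2 n^{-s}$ into $L(s,\pi\times\tilde{\pi})$ times an Euler product absolutely convergent for $\Re s > 1/2$ --- together with the fact that every such $p$ exceeds $\exp(\log^{\nu} N)$, which makes the relevant tail smaller than any power of $\log N$. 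Since this term carries no $\sqrt{N}$ outside the $(\log N)^{-A}$ saving, it is negligible against the stated error. The only genuine care needed is the bookkeeping in this last step and in matching the two error terms; the heart of the matter, the estimation of the bilinear sum itself, is deferred to the subsequent propositions.
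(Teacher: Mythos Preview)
Your overall architecture is exactly the paper's: M\"obius detection of primitivity, then a three-way split into (i) $n\in\mathcal{E}$ handled by Proposition~\ref{prop:exceptional}, (ii) $\mu_{\mathcal{P}}^2(n)=1$ and $n\notin\mathcal{E}$ rewritten via the combinatorial identity, and (iii) $\mu_{\mathcal{P}}^2(n)=0$ bounded through Lemma~\ref{le:largesieve}. The identity in step (ii) and its verification are correct and identical to the paper's.

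The gap is in your treatment of (iii). You reduce to the tail bound $\sum_{p>\exp((\log N)^{\nu})}\sum_{b\ge 2}|\lambda_\pi(p^b)|^2 p^{-b}\ll(\log N)^{-A}$ and claim this is a consequence of a factorisation $\sum_n|\lambda_\pi(n)|^2 n^{-s}=L(s,\pi\times\tilde\pi)H(s)$ with $H$ absolutely convergent for $\Re s>1/2$. That factorisation with that region of absolute convergence is \emph{not known} for $\mathrm{GL}_4$: the correction Euler factors involve degree~$\ge 2$ monomials in the Satake parameters, and with only the Luo--Rudnick--Sarnak / Kim--Sarnak bound $|\alpha_\pi(p,j)|\le p^{1/2-1/17}$ (or the paper's cited $p^{1/2-1/11}$) one cannot push absolute convergence below $\Re s=1$ in this way. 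In particular the naive pointwise bound gives $|\lambda_\pi(p^2)|^2/p^2 \ll p^{4\theta-2}$ with $4\theta-2>-1$, so even $\sum_p |\lambda_\pi(p^2)|^2/p^2$ is not seen to converge from this alone.

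The paper closes exactly this gap with Lemma~\ref{lem13} (and Lemma~\ref{lem14}): one writes the Hecke recursion $k\lambda_\pi(p^k)=\sum_{\ell=1}^k a_\pi(p^\ell)\lambda_\pi(p^{k-\ell})$, uses Kim--Sarnak on $a_\pi(p^\ell)$ for $\ell<k$ together with Rankin--Selberg, and for the boundary term $\ell=k$ invokes Kim's functoriality of the exterior square $\Lambda^2\pi$ on $\mathrm{GL}_6$ to control $\max_j|\alpha_\pi(p,j)|$ in terms of $|\lambda_\pi(p)|+|\lambda_{\Lambda^2\pi}(p)|$, to which Rankin--Selberg again applies. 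This yields $\sum_{P\le p\le N}|\lambda_\pi(p^k)|^2/p^k\ll P^{-1/11}\log N$, and summing over $k\ge 2$ and dyadic $P\ge\exp((\log N)^\nu)$ gives the superpolylogarithmic saving you asserted. So your outline is right, but the ``standard Rankin--Selberg convergence fact'' you invoke is genuinely the content of Lemma~\ref{lem13} and needs the exterior square input; it is not a soft consequence of the analytic properties of $L(s,\pi\times\tilde\pi)$.
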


To bound the bilinear sum we need a dispersion estimate; this is stated below and is a variant of the recent work of Fouvry-Radziwi\l\l.

\begin{restatable}[]{proposition}{propdispersion} \label{prop:dispersion2}
  Let $\pi$ be a unitary cuspidal automorphic representation of $\mathrm{GL}_4(\mathbb{A}_{\mathbb{Q}})$.
  Let $\kappa > 0$ be such $\lambda_{\pi}(m)$ satisfies a level $\kappa$ Siegel-Walfisz condition.
  Let $\alpha(m)$ be a sequence with $|\alpha(m)| \leq |\lambda_{\pi}(m)|$ for all $m \geq 1$. 
  Let $0 < \nu < 1/1000$ be given. Let $\mathcal{P}_{\nu}(N)$ denote the set of primes contained in $[\exp(\log^{\nu} N), N^{1/1000}]$.
  Let $\mathcal{Q} \subset [Q / 16, 16 Q]$ be a set of moduli such that all $q \in \mathcal{Q}$ are squarefree and moreover $q \in \mathcal{Q}$ has a single prime factor $\leq (\log Q)^{\nu \kappa}$ and all the other prime factors of $q$ are larger than $z > (\log Q)^{10000}$. Let $V$ be a smooth function compactly supported in $[1/100, 100]$. Then, for all, 
  $$
  \frac{Q^2}{\log^{10^9} Q} \leq N \leq Q^2 \log^{10^9} Q
  $$
  we have,
  \begin{align*}
  \sum_{\substack{c d \in \mathcal{Q}}} \mu(c) \varphi(d)  \sum_{\substack{(m,p) = 1 \\ m \geq 1, p \in \mathcal{P}_{\nu}(N) \\ p m \equiv \pm 1 \pmod{d}}} & \frac{\lambda_{\pi}(p) \alpha(m)}{\sqrt{p m}} V \Big ( \frac{p m}{N} \Big ) \\ & \ll \| V \|_{\infty, 2} \cdot Q \sqrt{N} \cdot \Big ( \frac{C(A, \nu, \kappa)}{(\log N)^{A}} + \frac{(\log Q)^4}{z^{1/4}} \Big )
  \end{align*}
  for any given $A > 10$, and where the constant $C(A, \nu, \kappa)$ depends only on $A$, $\nu$ and $\kappa$. 
\end{restatable}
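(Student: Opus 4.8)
The plan is to establish Proposition \ref{prop:dispersion2} by the dispersion method, with the bilinear estimates for Kloosterman fractions of Duke-Friedlander-Iwaniec \cite{DFI97} as the arithmetic core, following the strategy of the recent work of Fouvry-Radziwi{\l\l} \cite{FouvryRadzi}. First I would localize the prime to a dyadic block $p \sim P$ with $\exp(\log^{\nu} N) \ll P \ll N^{1/1000}$; there are only $O(\log N)$ such blocks and the loss is absorbed by the $(\log Q)^4$ in the statement. On such a block the cutoff $V(pm/N)$ confines $m$ to $m \asymp M := N/P$, and a Mellin inversion $V(pm/N) = \frac{1}{2\pi i}\int \widetilde V(s)(N/(pm))^{s}\,ds$, truncated so that $\Im s \ll (\log P)^{\kappa}$, separates the variables $p$ and $m$ up to a rapidly convergent integral. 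The decisive feature of the hypothesis on $\mathcal{Q}$ is that whenever $q = cd \in \mathcal{Q}$, its divisor $d$ equals $1$, or the unique prime factor of $q$ below $(\log Q)^{\nu\kappa}$, or else $d > z$; there are no divisors of intermediate size. I would treat these two regimes by completely different methods.

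In the range $d \leq (\log Q)^{\nu\kappa}$ I would fix $d$ and $m$ and evaluate the inner sum over $p \sim P$ using the Siegel-Walfisz hypothesis for $\lambda_{\pi}$. Since $N \asymp Q^2$ and $\log P \geq (\log N)^{\nu}$ one has $d \leq (\log Q)^{\nu\kappa} \ll (\log P)^{\kappa}$, so the sum of $\lambda_{\pi}(p)$ over $p \sim P$ in any fixed residue class modulo $d$, twisted by $p^{-s}$ with $\Im s \ll (\log P)^{\kappa}$, is $\ll_{A} \sqrt{P}(\log P)^{-A}$ for every $A > 10$. Summing trivially over $m \asymp M$ --- using Cauchy-Schwarz and the Rankin-Selberg bound $\sum_{m \sim M}|\lambda_{\pi}(m)|^{2} \ll M$ to dispose of $\alpha$ --- and then over the $O(Q(\log Q)^{O(1)})$ admissible pairs $(c,d)$ with $cd \in \mathcal{Q}$ and $d$ small, yields a total contribution $\ll C(A,\nu,\kappa)\,Q\sqrt{N}(\log N)^{-A}$ after choosing the Siegel-Walfisz exponent large in terms of $1/\nu$. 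This produces the first term of the asserted bound.

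For $d > z$ I would run the dispersion method. Applying Cauchy-Schwarz in the long variable $m$ absorbs $\alpha$ into the factor $\big(\sum_{m\sim M}|\lambda_{\pi}(m)|^{2}/m\big)^{1/2} \ll 1$, and it remains to bound the second moment $\Sigma := \sum_{m}\Phi(m/M)\big|\sum_{cd\in\mathcal{Q},\,d>z}\sum_{p\sim P}\mu(c)\varphi(d)\lambda_{\pi}(p)p^{-1/2}V(pm/N)\mathbf{1}_{pm\equiv\pm1(d)}\mathbf{1}_{(m,p)=1}\big|^{2}$. Squaring out and opening the inner $m$-sum by Poisson summation modulo $L := \operatorname{lcm}(d_{1},d_{2})$ splits $\Sigma$ into a zero-frequency (diagonal) part and an off-diagonal part. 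The zero-frequency part with $(d_{1},d_{2})=1$ reduces, after a further Mellin separation of $p_{1}$ from $p_{2}$, to expressions controlled by $\big|\sum_{p\sim P}\lambda_{\pi}(p)p^{-1/2-s}\big|^{2}$, and since $\sum_{p\sim P}\lambda_{\pi}(p)p^{-1/2-s} \ll \sqrt{P}(\log P)^{-A}$ by the prime number theorem for $L(s,\pi)$ (the $q=1$ Siegel-Walfisz input, uniform in the bounded $t$-aspect), this part is $\ll Q^{2}N(\log N)^{-A}$; the contribution of pairs with $(d_{1},d_{2}) > z$ is rare and stays inside the $z^{-1/2}$ budget. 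The off-diagonal part, after the Chinese Remainder Theorem and reciprocity, becomes a bilinear sum of Kloosterman fractions of the shape $e(a\overline{p_{1}}/d_{1} + b\overline{p_{2}}/d_{2})$, with $p_{i}$ and $d_{i}$ in the appropriate ranges and coefficients bounded by $\varphi(d_{i})$ and $|\lambda_{\pi}(p_{i})|$; the Duke-Friedlander-Iwaniec bounds then yield a power saving $z^{-1/4}$, using $P \leq N^{1/1000}$ and $z > (\log Q)^{10000}$ to remain within the admissible ranges. Collecting, $\Sigma \ll Q^{2}N\big((\log N)^{-A} + (\log Q)^{8}z^{-1/2}\big)$, and taking square roots gives the claim after renaming $A$.

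The principal obstacle is the off-diagonal analysis: one needs a level of distribution essentially as large as $N^{1/2}$ for the moduli $d$ (which run up to $Q \asymp N^{1/2}$), and this forces a genuine use of the unbalanced bilinear shape --- the short prime variable $p \in [\exp(\log^{\nu}N), N^{1/1000}]$ against the very long variable $m \asymp N/P$ --- through the Duke-Friedlander-Iwaniec estimates, with careful bookkeeping of the conductor drops coming from common factors of $d_{1}$ and $d_{2}$ and of the dependence on $P$, $M$, $z$ and $Q$. A secondary difficulty is that the dispersion diagonal must be shown to be genuinely small rather than a main term; this is precisely why the cancellation in $\sum_{p\sim P}\lambda_{\pi}(p)p^{-s}$ is still needed there, and why one must separate $p_{1}$ from $p_{2}$ by an extra Mellin inversion while retaining uniformity in the $t$-aspect of the Siegel-Walfisz hypothesis.
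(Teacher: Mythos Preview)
Your approach is correct and uses the same ingredients as the paper: Mellin separation, Cauchy--Schwarz in $m$, Poisson summation, a trivial bound for large gcd, Siegel--Walfisz for the zero frequency, and Duke--Friedlander--Iwaniec for the genuine off-diagonal. The only organizational difference is that you split on the size of $d$ \emph{before} dispersion, handling $d \leq (\log Q)^{\nu\kappa}$ by a direct Siegel--Walfisz argument and reserving dispersion for $d > z$, whereas the paper applies dispersion uniformly to all $d \sim D$ and invokes the structural hypothesis on $\mathcal{Q}$ only inside the dispersion, to force $(d_1,d_2)$ to be either $\leq (\log P)^{\kappa}$ or $> z$. Both routes work; yours is arguably more transparent for the small-$d$ part.

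Two small points to tighten. First, in your large-$d$ dispersion the zero-frequency dichotomy is not quite $(d_1,d_2) = 1$ versus $(d_1,d_2) > z$: since each $d_i$ may carry the small prime of its parent $q_i$, the gcd can also equal a single prime $\leq (\log Q)^{\nu\kappa}$, and this case needs Siegel--Walfisz modulo that prime (the same input as in your small-$d$ regime), not just the prime number theorem for $L(s,\pi)$. Second, the $z^{-1/4}$ in the final bound comes from the trivial treatment of the large-gcd pairs, not from Duke--Friedlander--Iwaniec; after the reciprocity step the DFI bound gives a genuine power saving in $PD$ (the paper obtains $\ll (PD)^{2-1/40}$ in the second moment), which is far stronger than $z^{-1/4}$ in this range.
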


With all these ingredients in place we can now easily prove Proposition \ref{prop:first}.

\begin{proof}[Proof of Proposition \ref{prop:first}]
  Proposition \ref{prop:first} follows immediately from combining Proposition \ref{prop:reduction} and Proposition \ref{prop:dispersion2} above.
\end{proof}

\section{Proposition \ref{prop:exceptional}: Bounding the contribution of primes}

We start with a trivial Lemma.

\begin{lemma} \label{le:binomial}
  Let $0 < \delta < 1$ and $10 < \alpha$. Let $n$ and $k$ be two integers with,
  $$
  |n - \alpha \log\log Q| \leq 100 \text{ and } |k - \delta \log\log Q| \leq 100
  $$
  Then,
  $$
  \sum_{j \leq k} \binom{n}{j} \ll e^{C \alpha / \delta} \cdot (\log Q)^{\delta \log \frac{1}{\delta} + (1 + \log \alpha) \delta}
  $$
  with $C > 10$ an absolute constant.
\end{lemma}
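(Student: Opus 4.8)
The plan is to reduce the binomial tail to its largest term and estimate that term carefully, with $L:=\log\log Q$ playing the role of the large parameter. First I would dispose of the degenerate range where $L$ is not large compared to $\alpha/\delta$: if $L\le 10^{4}/\delta$ then simply $\sum_{j\le k}\binom{n}{j}\le 2^{n}\le 2^{\alpha L+100}\le 2^{100}e^{10^{4}\alpha/\delta}$, which is absorbed into $e^{C\alpha/\delta}$ since the right-hand side is $\ge 1$ (the exponent of $\log Q$ being $\delta\log\frac1\delta+(1+\log\alpha)\delta>0$). So from now on I assume $L\ge 10^{4}/\delta$.

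In this range, since $\alpha>10$ one has $k/n=(\delta L+O(1))/(\alpha L+O(1))\le\tfrac13$, so the ratios $\binom{n}{j}/\binom{n}{j-1}=(n-j+1)/j\ge (n-k)/k\ge 2$ for all $1\le j\le k$; summing the resulting geometric series gives $\sum_{j\le k}\binom{n}{j}\le 2\binom{n}{k}$. It therefore suffices to bound $\binom{n}{k}$, and for this I would use the crude inequality $\binom{n}{k}\le n^{k}/k!\le (en/k)^{k}=\exp\!\big(k\log(en/k)\big)$ (the entropy bound would also do).

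The substance is the estimate of $k\log(en/k)$. Writing $n=\alpha L+a$ and $k=\delta L+b$ with $|a|,|b|\le 100$, one has
\begin{align*}
\log\!\Big(\frac{en}{k}\Big)=1+\log\alpha+\log\tfrac1\delta+\eta,\qquad \eta:=\log\!\Big(1+\frac{a}{\alpha L}\Big)-\log\!\Big(1+\frac{b}{\delta L}\Big),
\end{align*}
where $|\eta|\le 400/(\delta L)$ for $L\ge 10^{4}/\delta$. The crucial observation is that although $\eta$ is only $O_{\alpha,\delta}(1/L)$, it is multiplied by $k=\delta L+O(1)$, so $k\eta=O(1)$ with an \emph{absolute} implied constant; hence
\begin{align*}
k\log\!\Big(\frac{en}{k}\Big)=\delta L\big(1+\log\alpha+\log\tfrac1\delta\big)+O\!\big(\log\alpha+\log\tfrac1\delta\big)+O(1).
\end{align*}
Exponentiating and using $e^{\delta L(1+\log\alpha+\log(1/\delta))}=(\log Q)^{\delta\log\frac1\delta+(1+\log\alpha)\delta}$ gives $\binom{n}{k}\ll (\log Q)^{\delta\log\frac1\delta+(1+\log\alpha)\delta}\cdot(e\alpha/\delta)^{O(1)}$. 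Finally $(e\alpha/\delta)^{O(1)}=e^{O(\log\alpha)+O(\log(1/\delta))}\le e^{O(\alpha)+O(1/\delta)}$ by $\log t\le t$, and since $\alpha>10$ one has $O(\alpha)+O(1/\delta)\le C\alpha/\delta$ for a suitable absolute $C>10$; combining with $\sum_{j\le k}\binom{n}{j}\le 2\binom{n}{k}$ completes the proof.

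The one point that needs genuine care is the third step: the correction $\eta$ to $\log(en/k)$ must be extracted not merely as ``$o(1)$'' but with the quantitative rate $O(1/(\delta L))$, so that after multiplication by $k\asymp\delta L$ it contributes only a bounded amount and does \emph{not} inflate the exponent of $\log Q$ past $\delta\log\frac1\delta+(1+\log\alpha)\delta$. This is exactly the reason the statement carries the explicit front factor $e^{C\alpha/\delta}$: all the $\log\frac1\delta$-sized and $\log\alpha$-sized losses are collected there rather than left in the power of $\log Q$.
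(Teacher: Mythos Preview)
Your proof is correct. The paper argues a little differently: rather than isolating the top term $\binom{n}{k}$ via the geometric ratio and then bounding it by $(en/k)^k$, the paper invokes the entropy tail inequality $\sum_{j\le k}\binom{n}{j}\le \exp\bigl(nH(k/n)\bigr)$ with $H(p)=p\log\frac1p+(1-p)\log\frac{1}{1-p}$, writes $k/n=\delta/\alpha+O(1/(\alpha\log\log Q))$, and then Taylor-expands $H$ around $\delta/\alpha$ to obtain $nH(k/n)=\alpha H(\delta/\alpha)\log\log Q+O(\alpha/\delta)$, finishing with the elementary inequality $\alpha H(\delta/\alpha)\le \delta\log\frac1\delta+\delta(1+\log\alpha)$. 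Your route is slightly more elementary in that it avoids citing the entropy bound and instead uses the monotonicity of the binomial coefficients (via $k/n\le 1/3$) to reduce to a single term; the paper's route is a bit cleaner bookkeeping-wise because the entropy function packages both factorials at once. The two computations are of course equivalent to leading order, since $nH(k/n)=k\log(n/k)+(n-k)\log\frac{n}{n-k}$ and the second summand is $\approx k$ when $k/n$ is small, matching your extra ``$e$'' in $(en/k)^k$. Your explicit tracking of the error $\eta=O(1/(\delta L))$ and the observation that $k\eta=O(1)$ is exactly the care the paper takes (in the form $O(\alpha/\delta)$) to keep the correction out of the exponent of $\log Q$.
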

\begin{proof}
  First, notice that for $Q \leq \exp(\exp(C / \delta))$ the bound,
  $$
  \binom{n}{k} \leq 2^n
  $$
  is sufficient. Therefore without loss of generality we can assume that $Q > \exp(\exp(C / \delta))$ for some sufficiently large constant $C > 10$.
  We have (see e.g \cite[Theorem 3.1]{Tutorial}\footnote{note that all logarithms in \cite{Tutorial} are base 2, while all logarithms in this paper are natural base})
  $$
  \sum_{j \leq k} \binom{n}{j} \leq \exp( n H(k / n) )
  $$
  where
  $$
  H(p) := p \log \frac{1}{p} + (1 - p) \log \frac{1}{1 - p}.
  $$
  Notice that,
  $$
  \frac{k}{n} = \frac{\delta}{\alpha} + O \Big ( \frac{1}{\alpha \log\log Q} \Big )
  $$
  with an absolute constant in the $O(\cdot)$ thanks to the conditions $0 < \delta < 1$.
  Therefore,
  $$
  n H(k / n) = \alpha H(\delta / \alpha) \log\log Q + O(\alpha / \delta).
  $$
  Finally,
  $$
  \alpha H(\delta / \alpha) \leq \delta \log \frac{1}{\delta} + \delta \log \alpha + (\alpha - \delta) \log \frac{1}{1 - \delta / \alpha}
  \leq \delta \log \frac{1}{\delta} + \delta (1 + \log \alpha)
  $$
  where we used the inequality,
  $$
  - \log (1 - x) \leq \frac{x}{1 - x} \ , \ 0 < x < 1
  $$
  in the last step.
\end{proof}

We will also need the following.

\begin{lemma} \label{le:sieve}
  Let $\pi$ be a unitary cuspidal automorphic representation of $\mathrm{GL}_4(\mathbb{A}_{\mathbb{Q}})$. Let $\eta > 0$.
  Let $\mathcal{B} \subset [B / 4, 4 B]$ denote a set of integers all of whose prime factors are greater than $B^{\eta}$. Then,
  $$
  \sum_{b \in \mathcal{B}} |\lambda_{\pi}(b)|^2 \ll_{\eta} \frac{B}{\log B}
  $$
\end{lemma}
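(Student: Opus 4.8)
The plan is to prove the bound for the slightly larger set of all $b\in[1,4B]$ all of whose prime factors exceed $B^{\eta}$, and to exploit the rigidity of such integers. Since $b\le 4B$ and every prime factor of $b$ exceeds $B^{\eta}$, the number of prime factors of $b$ counted with multiplicity is at most $L:=\lceil 2/\eta\rceil$ once $B$ is large, so $b$ is $1$, a prime, or a product of boundedly many (possibly repeated) primes. Throughout I may and do assume $B$ large. I would split the sum according to whether $b$ is square-free.

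For the square-free part (which will give the main term $\ll_{\eta} B/\log B$): given square-free $b>1$ write $b=cP$, where $P=P^{+}(b)$ is the largest prime factor of $b$ and $c=b/P$. If $\omega(b)=1$ then $c=1$, while if $\omega(b)\ge 2$ then $P^{+}(b)\ge b^{1/\omega(b)}$ gives $c\le b^{1-1/\omega(b)}\le(4B)^{1/2}$; in all cases $c\le 2\sqrt{B}$ and $c$ is square-free with all prime factors in $(B^{\eta},P)$. Summing first over $c$ and then over $P$, and using the Rankin--Selberg bound $\sum_{p\le y}|\lambda_{\pi}(p)|^{2}\ll y/\log y$ together with $\log(4B/c)\ge\tfrac12\log B$, the inner sum over primes $P\le 4B/c$ is $\ll (B/c)/\log B$. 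Hence the square-free part is $\ll\frac{B}{\log B}\sum_{c}\frac{|\lambda_{\pi}(c)|^{2}}{c}$, where $c$ runs over $1$ and over square-free integers all of whose prime factors lie in $(B^{\eta},2\sqrt{B}]$. Since $|\lambda_{\pi}|^{2}$ is multiplicative, this last sum is at most $\prod_{B^{\eta}<p\le 2\sqrt{B}}(1+|\lambda_{\pi}(p)|^{2}/p)\le\exp\bigl(\sum_{B^{\eta}<p\le 2\sqrt{B}}|\lambda_{\pi}(p)|^{2}/p\bigr)$, and by the Mertens-type estimate $\sum_{p\le y}|\lambda_{\pi}(p)|^{2}/p=\log\log y+O(1)$ the exponent is $\log(1/\eta)+O(1)=O_{\eta}(1)$, so this part is $\ll_{\eta}B/\log B$.

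For the non-square-free part (a lower-order error): if $b$ is not square-free then $p^{2}\mid b$ for some prime $p>B^{\eta}$, so writing $b=p^{2}e$ shows there are at most $\sum_{p>B^{\eta}}4B/p^{2}\ll_{\eta}B^{1-\eta}$ such $b\le 4B$. By Cauchy--Schwarz and the standard fourth-moment bound $\sum_{b\le 4B}|\lambda_{\pi}(b)|^{4}\ll B(\log B)^{O(1)}$ (a consequence of the Rankin--Selberg method applied to $\pi\times\widetilde{\pi}\times\pi\times\widetilde{\pi}$), their total contribution is $\ll_{\eta}B^{1-\eta/2}(\log B)^{O(1)}$, which is $o(B/\log B)$. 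Adding the trivial term from $b=1$ completes the argument. The only non-formal input is the saving of one logarithm, which comes precisely from the largest prime factor of $b$ ranging over \emph{primes}; this is why $P^{+}(b)$ is isolated in the square-free case. The one place where a pointwise Ramanujan-type bound is not enough is the non-square-free terms, since a single value $|\lambda_{\pi}(p^{2})|$ can be as large as $p^{\theta}$ with $\theta$ close to $1/2$, so there one must pass through an averaged estimate such as the fourth-moment bound (equivalently, a mean bound for $\sum_{p\le y}|\lambda_{\pi}(p^{2})|^{2}$). I expect this to be the only point requiring care; the rest is routine manipulation together with Rankin--Selberg.
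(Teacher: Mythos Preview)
The paper's proof consists solely of the citation ``This follows from \cite[Proposition 6.2]{ThornerSoundararajan}'', so your direct argument is a genuinely different route and worth recording.

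Your treatment of the square-free part is essentially correct, modulo a slip: the inequality $c\le(4B)^{1/2}$ fails once $\omega(b)\ge 3$ (if $b=p_1p_2p_3$ with $p_1<p_2<p_3$ then $c=p_1p_2$ can be as large as $b^{2/3}$). This is harmless, since $P=P^{+}(b)>B^{\eta}$ already gives $c\le 4B^{1-\eta}$, hence $\log(4B/c)\ge\eta\log B-O(1)$, and the Mertens-type sum $\sum_{B^{\eta}<p\le 4B}|\lambda_{\pi}(p)|^{2}/p$ is still $\log(1/\eta)+O(1)$. So the one-logarithm saving survives with the constant now depending on $\eta$.

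The genuine gap is in the non-square-free part. The fourth-moment bound
\[
\sum_{b\le 4B}|\lambda_{\pi}(b)|^{4}\ll B(\log B)^{O(1)}
\]
is \emph{not} known for a general cuspidal $\pi$ on $\mathrm{GL}_{4}(\mathbb{A}_{\mathbb{Q}})$: what you call ``Rankin--Selberg applied to $\pi\times\widetilde{\pi}\times\pi\times\widetilde{\pi}$'' is a degree-$256$ $L$-function whose analytic continuation (even to $\Re s\ge 1$) would require the automorphy of $\pi\boxtimes\widetilde{\pi}$ on $\mathrm{GL}_{16}$, which is open. Your parenthetical alternative, a mean bound for $\sum_{p}|\lambda_{\pi}(p^{k})|^{2}/p^{k}$ with $k\ge 2$, \emph{is} available (this is Lemma~\ref{lem13} of the paper, proved using Kim's exterior-square lift $\Lambda^{2}\pi$ on $\mathrm{GL}_{6}$ together with the Kim--Sarnak bound), and used directly it finishes the job without Cauchy--Schwarz: write a non-square-free $b\in\mathcal{B}$ as $p^{a}c$ with $a\ge 2$ and $(p,c)=1$, bound $\sum_{c\le 4B/p^{a}}|\lambda_{\pi}(c)|^{2}\ll B/p^{a}$ by Rankin--Selberg, and then
\[
\sum_{a=2}^{L}\ \sum_{p>B^{\eta}}\frac{|\lambda_{\pi}(p^{a})|^{2}}{p^{a}}\ \ll_{\eta}\ B^{-\eta/11}\log B
\]
yields the negligible contribution $\ll_{\eta}B^{1-\eta/11}\log B$. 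Replace the fourth-moment detour by this direct argument and your proof is complete.
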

\begin{proof}
This follows from \cite[Proposition 6.2]{ThornerSoundararajan}
\end{proof}

Finally, we will also appeal to the following well-known bound.

\begin{lemma}[Shiu's lemma] \label{le:shiu} Let $C > 10$ and $\eta > 0$ be given.
  Let $f \geq 0$ be a multiplicative function with
  $f(p^k) \leq C^k$ for all prime $p$ and $k \geq 1$. Suppose that for every $\varepsilon > 0$
  there exists a $B(\varepsilon) > 0$ such that $f(n) \leq B(\varepsilon) n^{\varepsilon}$ for all $n \geq 1$.
  Then, for all $y \geq 1$, $1 \leq q \leq y^{1 - \eta}$ and $(a,q) = 1$,
  $$
  \sum_{\substack{x \leq n \leq x + y \\ n \equiv a \pmod{q}}} f(n) \ll_{\eta} \frac{y}{\varphi(q)} \exp \Big ( \sum_{\substack{p \leq x \\ p \nmid q}} \frac{f(p)}{p} \Big )
  $$
\end{lemma}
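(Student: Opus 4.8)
The estimate is Shiu's theorem on mean values of non-negative multiplicative functions in short intervals and progressions \cite{Shiu}, and the plan is to follow his argument (see also the streamlined treatments in \cite{Ten15}). First I would note that the bound is only meaningful — and the argument only works — when $y$ is at least a fixed positive power of $x$, so I assume $x^{\eta}\le y\le x$, whence $\log y\asymp_{\eta}\log x$ and $q\le y^{1-\eta}\le x^{1-\eta}$; the implied constant will in fact depend on $C$ as well, which is harmless since $C$ is fixed. I would then reduce to the case where $f$ is supported on squarefree integers: writing $n=km$ with $k$ squarefull, $m$ squarefree and $(k,m)=1$, multiplicativity gives $f(n)=f(k)f(m)$, and for $k\le(\log x)^{B}$ the inner sum over $m$ is a sum of a squarefree-supported multiplicative function over the interval $(x/k-y/k,x/k]$ in the progression $a\overline{k}\pmod q$, while the tail $k>(\log x)^{B}$ is disposed of crudely using the sparsity of squarefull numbers ($\sum_{k\ \mathrm{squarefull}}k^{-1/2}<\infty$) together with the bound $f(k)\ll_{\varepsilon}k^{\varepsilon}$ for small $\varepsilon=\varepsilon(\eta)$. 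The higher prime powers are in any case harmless because $\sum_{\ell\ge 2}f(p^{\ell})p^{-\ell}\ll p^{-3/2}$ by the same pointwise bound, so they only perturb the relevant Euler products by a bounded factor.

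Next, for $f$ supported on squarefree integers, I would fix a small $c=c(\eta)>0$, set $z:=x^{c}$, and decompose each $n$ in the range as $n=de$, where $d:=\prod_{p\mid n,\ p\le z}p$ collects the prime factors $\le z$ and $e:=n/d$ the prime factors $>z$; thus $(d,q)=(e,q)=1$ and $f(n)=f(d)f(e)$. The key point is that $\omega(e)\le \log x/\log z=1/c=O_{\eta}(1)$, so $f(e)\le C^{\omega(e)}\ll_{\eta}1$, and for $d$ in the main range $d\le yz^{-10}$ the complementary interval $(x/d-y/d,x/d]$ has length $\ge z^{10}$, long enough to apply the fundamental lemma of the Brun sieve: the inner sum over $e\equiv a\overline{d}\pmod q$ with $P^{-}(e)>z$ is $\ll_{\eta}\big(y/(d\varphi(q))\big)\prod_{p\le z,\ p\nmid q}(1-1/p)$. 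Summing over $d$ via the Euler product $\sum_{P^{+}(d)\le z,\ (d,q)=1}f(d)/d=\prod_{p\le z,\ p\nmid q}(1+f(p)/p)\le\exp\big(\sum_{p\le z,\ p\nmid q}f(p)/p\big)$, and using $\prod_{p\le z,\ p\nmid q}(1-1/p)\ll (q/\varphi(q))/\log z$ with $\log z\asymp_{\eta}\log x$, I would obtain the claimed main term $\ll (y/\varphi(q))\exp\big(\sum_{p\le x,\ p\nmid q}f(p)/p\big)$ — the factors $q/\varphi(q)\ll\log\log x$ and $1/\log x$ only help, since $f\ge 0$ forces $\exp\big(\sum_{p\le z,\ p\nmid q}f(p)/p\big)\le\exp\big(\sum_{p\le x,\ p\nmid q}f(p)/p\big)$.

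The hard part will be the remaining contribution of $n$ whose smooth part $d$ is large, $d>yz^{-10}$: here the interval left for $e$ is too short to sieve, and — crucially — a crude pointwise bound $f(n)\ll_{\varepsilon}n^{\varepsilon}$ over this set is far too lossy, because the set of such $n$ is not sparse (a positive proportion of integers have a prime factor near $z$). Following Shiu, I would handle this by decomposing $d$ itself further through a sequence of auxiliary thresholds $w_1<w_2<\cdots<z$ and running a Rankin-type argument (optimizing at $\sigma=1-1/\log z$) on the resulting pieces, using that a squarefree $z$-smooth integer $d\ge yz^{-10}$ necessarily has many prime factors and hence, for suitably chosen thresholds, a divisor in a convenient window for which the complementary interval becomes long enough to sieve after all. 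This iterated decomposition is the genuine obstacle; the remainder is bookkeeping, and for a fully detailed and optimized version of it I would simply invoke \cite{Shiu} (or \cite{Ten15}).
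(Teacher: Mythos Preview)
Your proposal is correct and is precisely the argument of Shiu \cite{Shi80}; the paper's own ``proof'' of this lemma is simply the citation ``See \cite{Shi80}'', so you are in fact supplying a faithful sketch of the very reference the paper invokes. There is nothing to compare or correct here beyond noting that the paper treats this as a black box while you have unpacked it.
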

\begin{proof}
  See \cite{Shi80}. 
\end{proof}

For convenience we recall the statement of Proposition \ref{prop:exceptional} below.

\propexceptional*

\begin{proof}
  Let $\mathcal{A} \subset [1, \exp(\log^{4 \nu} N)]$ be a set of integers that are $\exp(\log^{\nu} N)$ smooth, and let $\mathcal{B} \subset [1, N]$ be a set of integers such that all prime factors of $b \in \mathcal{B}$ are larger than $N^{1/1000}$. If $n \in \mathcal{E}$ then either $n$ can be written as $a b$ with $a \in \mathcal{A}$ and $b \in \mathcal{B}$ or $n$ belongs to a subset $\mathcal{U} \subset [N / 4, 4N]$ of  integers that can be written as $u v$ with $u > \exp(\log^{4 \nu} N)$ and having all prime factors $\leq \exp(\log^{\nu} N)$ and $(u,v) = 1$. We denote by $\mathcal{U}_1$ the set of integers that are larger than $\exp(\log^{4\nu} N)$ and all of whose prime factors are $\leq \exp(\log^{\nu} N)$.

  First notice that, by Lemma \ref{le:largesieve}
  \begin{align*}
  \sum_{\substack{c d \in \mathcal{Q}}} \mu(c) \varphi(d) & \sum_{\substack{ n \in \mathcal{U} \\ n \equiv \pm 1 \pmod{d}}} \frac{\lambda_{\pi}(n) \chi(n)}{\sqrt{n}} V \Big ( \frac{n}{N} \Big ) \\ & \ll Q \sqrt{N} (\log Q)^{3/2} \Big ( \sum_{n \in \mathcal{U}} \frac{|\lambda_{\pi}(n)|^2}{n} \Big )^{1/2}.
  \end{align*}
  Using the definition of $\mathcal{U}$ we can bound,
  $$
  \sum_{n \in \mathcal{U}} \frac{|\lambda_{\pi}(n)|^2}{n} \leq \sum_{u \in \mathcal{U}_1} \frac{|\lambda_{\pi}(u)|^2}{u} \sum_{n \leq 4N} \frac{|\lambda_{\pi}(n)|^2}{n}.
  $$
  By Rankin-Selberg the sum over $n$ is $\ll \log N$. If therefore remains to bound the sum over $\mathcal{U}_1$ using Rankin's trick.
Indeed all terms are larger than $\exp(\log^{4 \nu} N)$ and are $\exp(\log^{\nu} N)$ smooth, therefore we can bound the sum over $u \in \mathcal{U}_1$ by,
  \begin{align*}
  \exp & \Big ( - \iota \log^{4 \nu} N \Big ) \sum_{n \in \mathcal{U}_1} \frac{|\lambda_{\pi}(n)|^2}{n^{1 - \iota}}
  \\ &  \ \ \ \leq \exp \Big ( - \iota \log^{4 \nu} N \Big ) \prod_{p \leq \exp(\log^{\nu} N)} \Big (1 + \sum_{1 \leq \ell \leq 2 \log^{\nu} N} \frac{|\lambda_{\pi}(p^\ell)|^2}{p^{\ell (1 - \iota)}} \Big )
  \end{align*}
  We pick $\iota = \log^{-2 \nu} N$ so that,
  \begin{align*}
  \sum_{\ell \leq \log^{\nu} N} \sum_{\log p \leq 2 \log^{\nu} N} \frac{|\lambda_{\pi}(p^{\ell})|^2}{p^{\ell (1 - \iota)}} \ll \sum_{\ell \leq \log^{\nu} N} \sum_{\log p \leq 2 \log^{\nu} N} \frac{|\lambda_{\pi}(p^{\ell})|^2}{p^{\ell}} \ll \log^{\nu} N.
  \end{align*}
  As a result,
  $$
  \sum_{n \in \mathcal{U}_1} \frac{|\lambda_{\pi}(n)|^2}{n} \ll \exp \Big ( - \log^{\nu} N \Big ),
  $$
  for all $N$ large enough.

  Therefore we can now focus on the contribution of
  $$
  \sum_{\substack{cd \in \mathcal{Q}}} \mu(c) \varphi(d) \sum_{\substack{a \in \mathcal{A}, b \in \mathcal{B} \\ a b \equiv \pm 1 \pmod{d}}} \frac{\lambda_{\pi}(a b)}{\sqrt{a b}} V \Big ( \frac{a b}{N} \Big ).
  $$
  Re-arranging sums and opening $V$ into a Mellin transform, we can express this as
  $$
\frac{1}{2\pi i} \int_{(0)} \widetilde{V}(s) \sum_{\substack{\log A \leq \log^{4\nu} N \\ A B \asymp N}} \sum_{\substack{a \in \mathcal{A}, b \in \mathcal{B} \\ a \sim A, b \sim B}} \frac{\lambda_{\pi}(a b)}{(a b)^{1/2 + s}} \Big ( \sum_{\substack{cd \in \mathcal{Q}  \\ a b \equiv \pm 1 \pmod{d}}} \mu(c) \varphi(d) \Big ) N^s ds,
  $$
where $A$ and $B$ run over powers of two. Notice that $B \gg_{\varepsilon} Q^{2 - \varepsilon}$ for any $\varepsilon > 0$.
Therefore we focus now on bounding,
 $$
 \sum_{\substack{b \in \mathcal{B} \\ b \sim B}} \frac{\lambda_{\pi}(b)}{b^{1/2 +s}} \Big ( \sum_{\substack{c d \in \mathcal{Q} \\ a b \equiv \pm 1 \pmod{d}}} \mu(c) \varphi(d) \Big )
 $$
 with $B \asymp N / A$.
 By Cauchy-Schwarz this is less than,
 $$
\frac{1}{\sqrt{B}} \cdot \Big ( \sum_{\substack{b \in \mathcal{B} \\ b \sim B}} |\lambda_{\pi}(b)|^2 \Big )^{1/2} \cdot \Big ( \sum_{\substack{b \in \mathcal{B} \\ b \sim B}} \Big | \sum_{\substack{c d \in \mathcal{Q} \\ a b \equiv \pm 1 \pmod{d}}} \mu(c) \varphi(d) \Big |^2 \Big )^{1/2}.
$$
Because of Lemma \ref{le:sieve} it remains to evaluate
$$
\sum_{\substack{b \in \mathcal{B} \\ b \sim B}} \Big | \sum_{\substack{c d \in \mathcal{Q} \\ d | a b \mp 1}} \mu(c) \varphi(d) \Big |^2
$$
For our purpose there is little gain to be made from the term $\mu(c)$ and we bound the above by,
$$
\sum_{\substack{b \in \mathcal{B} \\ b \sim B}} \Big | \sum_{\substack{c d \in \mathcal{Q} \\ d | a b \mp 1}} \varphi(d) \Big |^2.
$$
First we notice that,
\begin{equation} \label{eq:divbound}
\sum_{\substack{c d \in \mathcal{Q} \\ d | a b \mp 1}} \varphi(d) \leq Q \sum_{\substack{\omega(d) \leq \delta \log\log Q + 20 \\ d | a b \mp 1}} \mu^2(d) \cdot \frac{\varphi(d)}{d} \ll Q \sum_{\substack{\omega(d) \leq \delta \log\log Q + 20 \\ d | a b \mp 1}} \mu^2(d).
\end{equation}
In particular we can restrict the sum to integers $b$ with $\omega(a b \mp 1) < 100 \log\log Q$ by using Rankin's trick and the divisor bound. Indeed the complement is bounded  by
\begin{align*}
Q^2 \sum_{\substack{b \sim B}} d(a b \mp 1)^2 e^{\omega(a b \mp 1) - 100 \log\log Q} & \ll \frac{Q^2}{(\log Q)^{100}} \sum_{\substack{n \asymp A B \\ n \equiv \pm 1 \pmod{a}}} d(n)^2 e^{\omega(n)} \\ & \ll \frac{Q^2 B}{(\log Q)^{50}}
\end{align*}
using Shiu's bound (Lemma \ref{le:shiu}, or see \cite{Shi80}). On the remaining integers with $\omega(a b \mp 1) < 100 \log\log Q$, using \eqref{eq:divbound} and Lemma \ref{le:binomial},
\begin{align*}
\sum_{\substack{c d \in \mathcal{Q} \\ d | a b \mp 1}} \varphi(d) & \ll Q \sum_{\substack{\omega(d) \leq \delta \log\log Q + 20 \\ d | ab \mp 1}} \mu^2(d) \\ & \ll Q \sum_{j \leq \delta \log\log Q + 20} \binom{\lfloor 100 \log\log Q \rfloor}{j} \ll e^{C / \delta} \cdot Q (\log Q)^{10 \delta + \delta \log \frac{1}{\delta}}
\end{align*}
with $C > 10$ an absolute constant.
Therefore it remains to bound,
$$
e^{C / \delta} Q (\log Q)^{10 \delta + \delta \log \frac{1}{\delta}} \sum_{\substack{b \sim  B \\ b \in \mathcal{B}}} \sum_{\substack{c d \in \mathcal{Q} \\ d | a b \mp 1}} \varphi(d).
$$
Interchanging sums we get
$$
e^{C / \delta} Q (\log Q)^{10 \delta + \delta \log \frac{1}{\delta}} \sum_{\substack{c d \in \mathcal{Q}}}\varphi(d)\sum_{\substack{b \sim B \\ b \in \mathcal{B} \\ d | a b \mp 1}} 1 .
$$
Using Brun-Titchmarsh inequality \cite[Theorem 2]{LargeSieve} to bound the sum over $b$, the above equation is
\begin{align*}
\ll e^{C / \delta} \cdot Q (\log Q)^{10 \delta + \delta \log \frac{1}{\delta}} \sum_{\substack{c d \in \mathcal{Q}}} & \varphi(d) \cdot \frac{B}{\varphi(d) \log Q} \\ & \ll \frac{e^{C / \delta}Q B}{(\log Q)} (\log Q)^{10 \delta + \delta \log \frac{1}{\delta}} \sum_{n \in \mathcal{Q}} d(n)
\end{align*}
Any $n \in \mathcal{Q}$ is square-free and has at most $\delta \log\log Q + 20$ prime factors, therefore $d(n) = 2^{\omega(n)} \ll (\log Q)^{\delta}$. Therefore the previous equation is bounded by, 
$$
e^{C / \delta} \frac{Q^2 B}{(\log Q)^2} \cdot (\log Q)^{20 \delta + 2 \delta \log \frac{1}{\delta}}.
$$
which is sufficient.
We have therefore obtained,
$$
\sum_{\substack{b \in \mathcal{B} \\ b \sim B}} \Big | \sum_{\substack{c d \in \mathcal{Q} \\ d | a b \mp 1}} \varphi(d) \Big |^2 \ll e^{C / \delta} \cdot \frac{Q^2 B}{(\log Q)^2} \cdot (\log Q)^{20 \delta + 2 \delta \log \frac{1}{\delta}}.
$$
Combining the bounds together we get
$$
\sum_{\substack{b \in \mathcal{B} \\ b \sim B}} \frac{\lambda_{\pi}(b)}{b^{1/2 + s}} \Big ( \sum_{\substack{cd \in \mathcal{Q} \\ a b \equiv \pm 1 \pmod{d}}} \mu(c) \varphi(d) \Big ) \ll e^{C / \delta} \cdot \frac{Q \sqrt{B}}{(\log Q)^{3/2}} \cdot (\log Q)^{\delta \log \frac{1}{\delta} + 10 \delta}.
$$
Finally it remains to sum trivially over $a \sim A$ and over the sum remaining powers of two $A, B$ satisfying $\log A \leq \log^{4 \nu} Q$ and $A B \asymp N$. This gives the final bound,
$$
\ll e^{C / \delta} \cdot \frac{Q \sqrt{N}}{(\log Q)^{3/2}} \cdot (\log Q)^{4 \nu + \delta \log \frac{1}{\delta} + 10 \delta}. 
$$
We also notice that,
$$
\int_{|t| \leq 1} |\widetilde{V}(it)| dt \leq \widetilde{V}(0) = \int_{\mathbb{R}} V(x) x^{-1} dx \ll \| V \|_{\infty}.
$$
and
$$
\int_{|t| \geq 1} |\widetilde{V}(it)| dt \ll \Big ( \int_{\mathbb{R}} |t \widetilde{V}(it)|^2 dt \Big )^{1/2} = \Big ( \int_{\mathbb{R}} x | V'(x)|^2 dx \Big )^{1/2} \ll \| V' \|_{\infty}.
$$
Therefore,
$$
\int_{\mathbb{R}} |\widetilde{V}(it)| dt \ll \| V \|_{\infty} + \| V' \|_{\infty}.
$$
Altogether this yields the result.
\end{proof}

\section{Proposition \ref{prop:reduction}: Reduction to dispersion estimate}

For convenience we recall here the statement of Proposition \ref{prop:reduction}. The proof crucially relies on Proposition \ref{prop:exceptional} established in the previous section. 

\propreduction*

Throughout let $\mathcal{P}_{\nu}(N)$ denote the set of primes contained in $[\exp(\log^{\nu} N), N^{1/1000}]$. Denote by
$\mu^2_{\mathcal{P}}(n)$ the indicator function of integers $n$ not divisible by $p^2$ with $p \in \mathcal{P}_{\nu}(N)$.

We will also need the following Lemmas.
\begin{lemma}\label{lem13}
  Let $\pi$ be a fixed unitary cuspidal automorphic representation of $\mathrm{GL}_4(\mathbb{A}_{\mathbb{Q}})$. Let $k \geq 2$ be given. Then, for all $10 \leq P \leq N$, 
  $$
  \sum_{P \leq p \leq N} \frac{|\lambda_{\pi}(p^{k})|^2}{p^k} \ll P^{-1/11} \log N,
  $$
 where the implied constant depends on $\pi.$
\end{lemma}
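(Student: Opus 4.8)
The plan is to reduce $\lambda_\pi(p^k)$ --- which a priori can be as large as $p^{k\vartheta}$ with $\vartheta=\tfrac12-\tfrac1{17}$ --- to bilinear expressions in the degree-one Hecke eigenvalues of auxiliary automorphic representations, for which much stronger averages are available than anything one gets directly for $\lambda_\pi(p^k)$. First I would dispose of the finitely many primes $p\mid N_\pi$: they contribute $O_{\pi,k}(1)$, and this sub-range is empty unless $P=O_\pi(1)$, so their total contribution is $\ll_{\pi,k}P^{-1/11}$ (the implied constant absorbing $N_\pi$). For $p\nmid N_\pi$ the Satake identity $\sum_{m\ge0}\lambda_\pi(p^m)x^m=\prod_{i=1}^4(1-\alpha_{\pi,i}(p)x)^{-1}$ shows that $\lambda_\pi(p^k)=h_k(\alpha_{\pi,1}(p),\dots,\alpha_{\pi,4}(p))$ is the complete homogeneous symmetric polynomial of degree $k$. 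Expanding the generating function $\sum_k h_kx^k=\bigl(\sum_{j=0}^4(-1)^je_jx^j\bigr)^{-1}$ expresses $h_k$ as a fixed integer polynomial, depending only on $k$, in the elementary symmetric functions $e_j(\alpha(p))=\lambda_{\wedge^j\pi}(p)$, $1\le j\le4$, where $\wedge^1\pi=\pi$, $\wedge^2\pi$ is automorphic on $\mathrm{GL}_6(\mathbb{A}_{\mathbb{Q}})$ by Kim's exterior-square lift, $\wedge^3\pi\cong\widetilde\pi\otimes\omega_\pi$, and $\wedge^4\pi=\omega_\pi$; in particular each $\wedge^j\pi$ is an isobaric automorphic representation.

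Next I would convert this into a pointwise-in-$p$ bound. Squaring and using Cauchy--Schwarz over the $O_k(1)$ monomials gives $|\lambda_\pi(p^k)|^2\ll_k\sum_{\mathbf{a}}\prod_{j=1}^4|\lambda_{\wedge^j\pi}(p)|^{2a_j}$, the sum over tuples $\mathbf{a}=(a_1,\dots,a_4)$ with $\sum_jja_j=k$. Each monomial has some index $j_0$ with $a_{j_0}\ge1$; I keep a single factor $|\lambda_{\wedge^{j_0}\pi}(p)|^2$ and bound every other factor by the Luo--Rudnick--Sarnak estimate $|\lambda_{\wedge^j\pi}(p)|\ll_j p^{j\vartheta}$ (legitimate since the Satake parameters of $\wedge^j\pi$ are $j$-fold products of those of $\pi$). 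The remaining factors carry total weight $k-j_0$, so
\[
\frac{|\lambda_\pi(p^k)|^2}{p^k}\ \ll_k\ \sum_{j_0=1}^{4}\frac{|\lambda_{\wedge^{j_0}\pi}(p)|^2}{p^{\,e_{j_0}}},\qquad e_{j_0}:=k-2\vartheta(k-j_0).
\]
Since $k\ge2$ and $j_0\ge1$ we get $e_{j_0}\ge2-2\vartheta=\tfrac{19}{17}$, hence $e_{j_0}-1\ge\tfrac{2}{17}$ in every case.

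Finally, for each $\sigma=\wedge^{j_0}\pi$ the prime number theorem for the Rankin--Selberg $L$-function $L(s,\sigma\times\widetilde\sigma)$ --- which has a pole at $s=1$, is non-vanishing on $\Re s=1$ by Jacquet--Shalika, and has non-negative coefficients for $-L'/L$ --- yields $\sum_{p\le z}|\lambda_\sigma(p)|^2\log p\ll_\pi z$, hence $\sum_{p\le z}|\lambda_\sigma(p)|^2\ll_\pi z/\log z\le z$. Partial summation against $p^{-e_{j_0}}$ with $e_{j_0}>1$ then gives $\sum_{P\le p\le N}|\lambda_\sigma(p)|^2p^{-e_{j_0}}\ll_\pi P^{1-e_{j_0}}\le P^{-2/17}\le P^{-1/11}$, and summing over the at most four values of $j_0$ together with the ramified contribution proves the lemma --- in fact with $\log N$ replaced by $1$, so the stated $\log N$ is slack. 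The one genuinely delicate point is that every exponent $e_{j_0}$ must come out strictly larger than $1$: this is precisely why the reduction through exterior powers is needed, since the naive Rankin--Selberg input $\sum_{n\le x}|\lambda_\pi(n)|^2\ll x$ only produces $\sum_{P\le p\le N}|\lambda_\pi(p^k)|^2p^{-k}\ll\log N$ with no decay in $P$, and the inequality $e_{j_0}>1$ is tight exactly in the case $k=2$, $j_0=1$.
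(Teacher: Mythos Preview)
Your proof is correct and in fact slightly sharper than what is needed: you obtain $P^{-2/17}$ in place of $P^{-1/11}$, and as you observe the factor $\log N$ is unnecessary. The route, however, is genuinely different from the one in the paper. The paper uses Newton's identity $k\lambda_\pi(p^k)=\sum_{\ell=1}^{k}a_\pi(p^\ell)\lambda_\pi(p^{k-\ell})$ with $a_\pi(p^\ell)=\sum_j\alpha_{\pi,j}(p)^\ell$; the terms with $\ell<k$ are handled by the Luo--Rudnick--Sarnak bound on $a_\pi(p^\ell)$ together with Rankin--Selberg for $\lambda_\pi(p^{k-\ell})$, while the pure power-sum term $\ell=k$ is treated by a case-by-case analysis of the Satake parameters (tempered, one unbalanced pair, two unbalanced pairs) leading to the pointwise inequality $\max_j|\alpha_{\pi,j}(p)|\le |\lambda_{\Lambda^2\pi}(p)|+|\lambda_\pi(p)|+4$, after which Rankin--Selberg for $\Lambda^2\pi$ (via Kim's lift) closes the argument.

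Your approach instead passes to elementary symmetric polynomials and identifies them with the Hecke eigenvalues of the exterior powers $\wedge^j\pi$ for $j=1,2,3,4$, all of which are (isobaric) automorphic --- this is the same crucial input, Kim's exterior-square lift, but used more structurally. The upshot is that you avoid the case analysis entirely and obtain a uniform argument for every monomial, with the single numerical check $k-2\vartheta(k-j_0)\ge 2-2\vartheta>1$. Both proofs rest on the same two pillars (a nontrivial bound towards Ramanujan for $\pi$, and Rankin--Selberg for $\pi$ and $\Lambda^2\pi$); yours packages them more cleanly and makes the dependence on $\vartheta$ transparent.
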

\begin{remark}
  The dependence on $\pi$ in the bound is irrelevant to us and will be omitted.
\end{remark}

\begin{proof}
Assume that $P$ is sufficiently large so that for all primes $p > P$ the local representation $\pi_p$ is  unramified. Let $\alpha_{\pi}(p, j)$ with $1 \leq j \leq 4$
  denote the Satake parameters. Define,
\begin{equation}\label{4}
  a_{\pi}(p^{\ell}) := \sum_{j = 1}^{4} \alpha_{\pi}(p, j)^{\ell}.  
\end{equation}

By the Hecke relation
  $$
  k \lambda_{\pi}(p^k) = \sum_{\ell = 1}^{k} a_{\pi}(p^{\ell}) \cdot \lambda_{\pi}(p^{k - \ell}).
  $$
and Cauchy-Schwarz we have
    $$
    \sum_{P \leq p \leq  N} \frac{|\lambda_{\pi}(p^{k})|^2}{p^k} \ll \frac{1}{k} \sum_{\ell = 1}^{k-1} \sum_{P \leq p \leq N} |a_{\pi}(p^{\ell})|^2 \cdot \frac{|\lambda_{\pi}(p^{k - \ell})|^2}{p^{k - \ell}}+\sum_{P \leq p \leq N}\frac{|a_{\pi}(p^{k})|^2}{p^k}.
    $$

By the Kim-Sarnak bound \cite[Appendix 1]{Kim03}, $
    |a_{\pi}(p^{\ell})| \ll p^{\ell/2 - \ell / 11}.$ So, by Rankin-Selberg, 
\begin{align*}
\frac{1}{k} \sum_{\ell = 1}^{k-1} \sum_{P \leq p \leq N} |a_{\pi}(p^{\ell})|^2 \cdot \frac{|\lambda_{\pi}(p^{k - \ell})|^2}{p^{k - \ell}}\ll \frac{1}{k} \sum_{\ell = 1}^{k-1} \sum_{P \leq p \leq N} p^{-\frac{2\ell}{11}} \cdot \frac{|\lambda_{\pi}(p^{k - \ell})|^2}{p^{k - \ell}}\ll P^{-\frac{2}{11}} \log N.
\end{align*}

The contribution from $\ell=k$ can be handled separately as follows.   According to the  functoriality established by Kim \cite{Kim03}, the exterior square $\Pi=\Lambda^2\pi$ of $\pi$ is an automorphic representation of $\mathrm{GL}_6(\mathbb{A}_{\mathbb{Q}}).$ 

Since $\pi_p$ is unitary and unramified, we have 
\begin{align*}
\big\{\overline{\alpha_{\pi}(p,1)}, \overline{\alpha_{\pi}(p,2)}, \overline{\alpha_{\pi}(p,3)}, \overline{\alpha_{\pi}(p,4)}\big\}=\big\{\alpha_{\pi}(p,1)^{-1}, \alpha_{\pi}(p,2)^{-1}, \alpha_{\pi}(p,3)^{-1}, \alpha_{\pi}(p,4)^{-1}\big\}.
\end{align*} 

As a consequence, the Langlands class $\big\{\alpha_{\pi}(p,1), \alpha_{\pi}(p,2), \alpha_{\pi}(p,3), \alpha_{\pi}(p,4)\big\}$ is one of the following forms
\begin{enumerate}
	\item[(a).] $\big\{\alpha, \beta,  \gamma,\delta\big\},$ where $|\alpha|=|\beta|=|\gamma|=|\delta|=1.$
	\item[(b).] $\big\{p^t\alpha, p^{-t}\alpha, \beta,  \gamma\big\},$ where $t>0,$ $|\alpha|=|\beta|=|\gamma|=1.$
	\item[(c).] $\big\{p^t\alpha, p^{-t}\alpha, p^{s}\beta,  p^{-s}\beta\big\},$ where $t>0,$ $s>0,$ $|\alpha|=|\beta|=1.$
\end{enumerate}

In the Case (a), we have $|\alpha_{\pi}(p,j)|\leq 4,$ $1\leq j\leq 4.$ In 
the Case (b), $|\lambda_{\pi}(p)|=|\alpha_{\pi}(p,1)+\alpha_{\pi}(p,2)+ \alpha_{\pi}(p,3)+ \alpha_{\pi}(p,4)|\geq p^t-3.$ Hence,  
$$
\max_{1\leq j\leq 4}|\alpha_{\pi}(p,j)|=p^t\leq |\lambda_{\pi}(p)|+3.
$$ 
In 
the Case (c), we have, $\lambda_{\Pi}(p)=(p^{s+t}+p^{-s+t}+p^{s-t}+p^{-s-t})\alpha\beta+\alpha^2+\beta^2,$  
for $s>0,$ $t>0.$ Hence $|\lambda_{\Pi}(p)|\geq p^{s+t}+p^{-s+t}+p^{s-t}+p^{-s-t}-2.$ So 
$$
\max_{1\leq j\leq 4}|\alpha_{\pi}(p,j)|=\max\{p^s,p^t\}\leq |\lambda_{\Pi}(p)|+2.
$$ 

Consequently, we have  
\begin{align*}
\max_{1\leq j\leq 4}|\alpha_{\pi}(p,j)|\leq |\lambda_{\Pi}(p)|+|\lambda_{\pi}(p)|+4.
\end{align*}
Substituting this estimate into \eqref{4} we then obtain 
    \begin{align*}
     \sum_{P \leq p \leq N} \frac{|a_{\pi}(p^{k})|^2}{p^k} \ll 3^{2k} \sum_{P \leq p \leq N} \frac{|\lambda_{\Pi}(p)|^{2k}+|\lambda_{\pi}(p)|^{2k}+4^{2k}}{p^k},
\end{align*}
which, by the Kim-Sarnak bound for the automorphic representation $\Pi$ (resp. $\pi$) of $\mathrm{GL}_6(\mathbb{A}_{\mathbb{Q}})$ (resp. $\mathrm{GL}_4(\mathbb{A}_{\mathbb{Q}})$) and Rankin-Selberg, is
\begin{align*}
\ll 3^{2k} P^{-(k-1)/11} \sum_{P \leq p \leq N} \frac{|\lambda_{\Pi}(p)|^{2}+|\lambda_{\pi}(p)|^{2}}{p} + 12^{2k} \cdot P^{-(k - 1)} \ll P^{-1/11} \log N, 
\end{align*}
where the implied constant depends on $\pi.$
\end{proof}

\begin{lemma}\label{lem14}
 Let $\pi$ be a fixed unitary cuspidal automorphic representation of $\mathrm{GL}_4(\mathbb{A}_{\mathbb{Q}})$. Then,  for $10 \leq P \leq N$, 
  $$
  \sum_{P \leq p \leq N} \frac{|\lambda_{\pi}(p)|^4}{p^2} \ll P^{-2/11} \log N,
  $$
  where the implied constant depends on $\pi.$ 
 \end{lemma}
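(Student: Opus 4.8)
The plan is to follow the proof of Lemma~\ref{lem13} in the trivial case $k=1$, where no Hecke recursion is needed: I would write $|\lambda_\pi(p)|^4=|\lambda_\pi(p)|^2\cdot|\lambda_\pi(p)|^2$ and estimate one of the two factors by the pointwise (Kim--Sarnak) bound and the other by Rankin--Selberg on average.

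Concretely, I would first recall, exactly as in the proof of Lemma~\ref{lem13}, the Kim--Sarnak bound towards Ramanujan \cite[Appendix~1]{Kim03}: at every prime $p$ at which $\pi_p$ is unramified the Satake parameters satisfy $|\alpha_\pi(p,j)|\le p^{1/2-1/11}$ for $1\le j\le 4$, and hence $|\lambda_\pi(p)|=|\alpha_\pi(p,1)+\cdots+\alpha_\pi(p,4)|\ll p^{1/2-1/11}$. (The finitely many ramified primes contribute $O_\pi(1)$ to the sum, which is harmless.) Applying this bound to one of the two factors of $|\lambda_\pi(p)|^2$ gives $|\lambda_\pi(p)|^4\ll p^{1-2/11}\,|\lambda_\pi(p)|^2$, so that for $p\ge P$,
$$
\frac{|\lambda_\pi(p)|^4}{p^2}\ll\frac{|\lambda_\pi(p)|^2}{p^{1+2/11}}=\frac{1}{p^{2/11}}\cdot\frac{|\lambda_\pi(p)|^2}{p}\le\frac{1}{P^{2/11}}\cdot\frac{|\lambda_\pi(p)|^2}{p}.
$$
Summing over $P\le p\le N$ and using the Rankin--Selberg bound $\sum_{p\le N}|\lambda_\pi(p)|^2/p\ll\log N$ then gives
$$
\sum_{P\le p\le N}\frac{|\lambda_\pi(p)|^4}{p^2}\ll P^{-2/11}\sum_{P\le p\le N}\frac{|\lambda_\pi(p)|^2}{p}\ll P^{-2/11}\log N,
$$
as claimed.

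I expect no real obstacle: the only inputs are the pointwise Kim--Sarnak bound already invoked in Lemma~\ref{lem13} and the elementary Rankin--Selberg estimate $\sum_{n\le x}|\lambda_\pi(n)|^2/n\ll\log x$. The one point requiring a word of care is the contribution of the finitely many ramified primes together with the degenerate ranges of small $N$, both of which are absorbed into the $\pi$-dependent implied constant permitted by the statement. It is worth contrasting this with Lemma~\ref{lem13}: there $|\lambda_\pi(p^k)|^2$ genuinely mixes several Hecke coefficients, forcing one to use the recursion $k\lambda_\pi(p^k)=\sum_{\ell=1}^k a_\pi(p^\ell)\lambda_\pi(p^{k-\ell})$ together with the automorphy of $\Lambda^2\pi$ on $\mathrm{GL}_6(\mathbb{A}_{\mathbb{Q}})$, whereas $|\lambda_\pi(p)|^4$ involves only the single coefficient $\lambda_\pi(p)$, so the elementary ``one factor pointwise, one factor on average'' device already suffices.
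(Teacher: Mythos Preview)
Your proposal is correct and is essentially identical to the paper's own proof: the paper likewise bounds $|\lambda_\pi(p)|^2\le p^{1-2/11}$ via Kim--Sarnak and then invokes Rankin--Selberg for $\sum_{P\le p\le N}|\lambda_\pi(p)|^2/p\ll\log N$. There is nothing to add.
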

 \begin{proof}
By the Kim-Sarnak bound, 
  $$
  \sum_{P \leq p \leq N} \frac{|\lambda_{\pi}(p)|^4}{p^2} \leq P^{- 2/11} \sum_{P \leq p \leq N} \frac{|\lambda_{\pi}(p)|^2}{p} \ll P^{ - 2/11} \log N,
  $$
  where the last inequality follows from Rankin-Selberg convolution.
 \end{proof}

 We are now ready to prove Proposition \ref{prop:reduction}.
\begin{proof}[Proof of Proposition \ref{prop:reduction}]
Executing the sum over $q$ we need to estimate
$$
\sum_{c d \in \mathcal{Q}} \mu(c) \varphi(d) \sum_{\substack{n \equiv \pm 1 \pmod{d}}} \frac{\lambda_{\pi}(n)}{\sqrt{n}} V \Big ( \frac{n}{N} \Big ).
$$

\subsection{Integers with large square factors from $\mathcal{P}_{\nu}(N)$}

First we handle the contribution of
\begin{align*}
\sum_{\substack{c d \in \mathcal{Q}}} \mu(c) \varphi(d) & \sum_{n \equiv \pm 1 \pmod{d}} \frac{\lambda_{\pi}(n) (1 - \mu_{\mathcal{P}}^2(n))}{\sqrt{n}} V \Big ( \frac{n}{N} \Big )
\end{align*}
Using Lemma \ref{le:largesieve} this is bounded by,
$$
Q \sqrt{N} (\log Q)^2 \Big ( \sum_{n \sim N} \frac{|\lambda_{\pi}(n)|^2  (1 - \mu^2_{\mathcal{P}}(n))}{n} \Big )^{1/2} \| V \|_{\infty} .
$$
By the union bound, the above sum over $n$ is
$$
\sum_{\substack{k \geq 2 \\ P^k \ll N \\ (\log N)^{\nu} \leq \log P}} \sum_{p \sim P} \frac{|\lambda_{\pi}(p^k)|^2}{p^k} \sum_{\substack{n \asymp N / P^k}} \frac{|\lambda_{\pi}(n)|^2}{n}.
$$
By Lemma \ref{lem13} and Rankin-Selberg, this is,
$$
\ll \exp \Big ( - \frac{1}{11} (\log N)^{\nu} \Big ) \cdot (\log N)^{20} \ll \exp \Big ( - \frac{1}{20} (\log N)^{\nu} \Big ).
$$
\end{proof}

\subsection{Integers without prime factors in $\mathcal{P}_{\nu}(N)$}
Applying Proposition \ref{prop:exceptional} we obtain
$$
\sum_{\substack{c d \in \mathcal{Q}}} \mu(c) \varphi(d)
\sum_{\substack{n \in \mathcal{E} \\ n \equiv \pm 1 \pmod{d}}} \frac{\lambda_{\pi}(n)}{\sqrt{n}} V \Big ( \frac{n}{N} \Big ) \ll \frac{\sqrt{N} Q}{(\log N)^{3/2}} \cdot (\log N)^{4 \nu + 10 \delta + \delta \log \frac{1}{\delta}}.
$$

\subsection{Integers with a single prime factor from $\mathcal{P}_{\nu}(N)$}
Let $\mathcal{E}$ denote the set of integers $n$ that cannot be written as $p m$ with $p \in \mathcal{P}_{\nu}(N)$. 
We can write,
\begin{align*}
  \lambda_{\pi}(n) \mu^2_{\mathcal{P}}(n) \mathbf{1}_{n \not \in \mathcal{E}} & = \sum_{\substack{n = p m \\ p \in \mathcal{P}_{\nu}(N) \\ (m, p) = 1}} \frac{\lambda_{\pi}(p) \lambda_{\pi}(m) \mu^2_{\mathcal{P}}(m)}{1 + \omega(m; \mathcal{P}_{\nu}(N))}.
\end{align*}
Therefore the contribution of integers $n \not \in \mathcal{E}$ is exactly
\begin{equation} \label{eq:finf}
\sum_{\substack{c d \in \mathcal{Q}}} \mu(c) \varphi(d) \sum_{\substack{(m,p) = 1 \\ p \in \mathcal{P}_{\nu}(N) , m \geq 1 \\ p m \equiv \pm 1 \pmod{d}}} \frac{\lambda_{\pi}(p) \alpha(m)}{\sqrt{p m}} V \Big  ( \frac{p m}{N} \Big ),
\end{equation}
where
$$
\alpha(m) := \frac{\lambda_{\pi}(m) \mu_{\mathcal{P}}^2(m)}{1 + \omega(m; \mathcal{P}_{\eta}(N))} .
$$
We have thus reduced the problem to estimating \eqref{eq:finf}.

\section{Proposition \ref{prop:dispersion2}: Dispersion estimate}


Our main technical input will be the following estimate for bilinear forms in Kloosterman fractions due to Duke, Friedlander and Iwaniec \cite{DFI97}. This estimate was recently improved by Bettin and Chandee \cite{BC18}, but we will not need the stronger version. 

\begin{lemma} \label{le:DFI}
  Let $F(x,y)$ be a smooth function with,
  $$
  \frac{\partial^{j}}{\partial x^j} \frac{\partial^k}{\partial y^k} F(x,y) \ll \Delta^{j + k} x^{-j} y^{-k}
  $$
  for all $0 \leq j, k \leq 2$ and $\Delta > 10$. 
  For any integer $\ell \neq 0$ and sequence $\alpha, \beta$ supported respectively on $[M, 2M)$ and $[N, 2N)$ we have,
      \begin{align*}
      \sum_{m,n \geq 1} & \alpha(m) \beta(n) e \Big ( \frac{\ell \overline{m}}{n} \Big ) F(m,n) \\ & \ll_{\varepsilon}
      \Delta^2 \cdot \Big ( \sum_{m} |\alpha(m)|^2 \Big )^{1/2}
      \cdot \Big ( \sum_{n} |\beta(n)|^2 \Big )^{1/2} \cdot (|\ell| + M N)^{\frac{3}{8}} \cdot (M + N)^{\frac{11}{48} + \varepsilon}
      \end{align*}
      for every $\varepsilon > 0$. 
\end{lemma}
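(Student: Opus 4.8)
This is the bilinear estimate for Kloosterman fractions of Duke, Friedlander and Iwaniec, so the simplest course is to quote \cite{DFI97} verbatim; I sketch the structure of their proof. The first step detaches the weight $F$. Rescaling $(x,y)\mapsto(Mx,Ny)$, the hypotheses make $F$ into a fixed smooth bump whose Fourier transform is $\ll\Delta^2(1+|\xi|/\Delta)^{-2}(1+|\eta|/\Delta)^{-2}$; Fourier inversion then reduces the problem to bounding, uniformly over frequencies $|\xi|,|\eta|\ll\Delta$, the same sum with $F(m,n)$ replaced by $e(\xi m/M+\eta n/N)$, the oscillation absorbed into fresh coefficients of the same $L^2$-size as $\alpha,\beta$ and the frequency integrals producing the factor $\Delta^2$. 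Next, reciprocity $\overline m/n\equiv 1/(mn)-\overline n/m\pmod 1$ (valid when $(m,n)=1$, the only pairs present) gives $e(\ell\overline m/n)=e(\ell/(mn))\,e(-\ell\overline n/m)$: when $|\ell|\le MN$ the factor $e(\ell/(mn))$ is a benign smooth weight, and when $|\ell|>MN$ it is expanded by a further Fourier inversion whose cost is exactly what is recorded by the factor $(|\ell|+MN)^{3/8}$.

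One is left with $\sum_{m,n}\alpha(m)\beta(n)e(-\ell\overline n/m)$ against smooth weights supported in the dyadic boxes. Apply Cauchy--Schwarz in $m$ to remove $\alpha$:
\[
\Big|\sum_{m,n}\alpha(m)\beta(n)e(-\ell\overline n/m)\Big|\le\|\alpha\|_2\Big(\sum_{m\sim M}\Big|\sum_{n\sim N}\beta(n)e(-\ell\overline n/m)\Big|^2\Big)^{1/2},
\]
and open the square. Using $\overline{n_1}-\overline{n_2}\equiv(n_2-n_1)\overline{n_1 n_2}\pmod m$ on $(n_1 n_2,m)=1$, the inner sum over $m$ becomes $\sum_{m\sim M}w(m)\,e\big(\ell(n_2-n_1)\overline{n_1 n_2}/m\big)$ for a smooth $w$. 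The diagonal $n_1=n_2$ contributes $\ll M\|\beta\|_2^2$ to the sum of squares, hence a term $\ll\|\alpha\|_2\|\beta\|_2\sqrt M$, comfortably within the target; all the difficulty is the off-diagonal $n_1\ne n_2$.

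For the off-diagonal one flips $\overline{n_1 n_2}/m$ by reciprocity once more into $-\overline m/(n_1 n_2)$, turning the $m$-sum into an incomplete Kloosterman-type sum $\sum_{m\sim M}w'(m)\,e(-h\overline m/k)$ to the fixed modulus $k:=n_1 n_2$, with $h:=\ell(n_2-n_1)$. Completing modulo $k$ and invoking Weil's bound gives $\ll k^{1/2+\varepsilon}(h,k)^{1/2}$ (plus the trivial contribution when $M>k$), and summing over $n_1,n_2\sim N$ — the gcd factors absorbed by the divisor bound, which is the source of $(M+N)^{\varepsilon}$ — and balancing against the diagonal, while tracking $h\asymp|\ell|N$ and the relative sizes of $M$, $N$ and $k\asymp N^2$, yields the exponents $(|\ell|+MN)^{3/8}(M+N)^{11/48+\varepsilon}$.

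The genuine obstacle is the off-diagonal in the balanced regime $M\asymp N$: there $M$ is far smaller than the modulus $k\asymp N^2$, so completion plus Weil alone yields no saving in the $m$-sum, and one must instead choose to apply Cauchy--Schwarz to the other variable and work in the reciprocity-conjugate form, and — to push the exponent down to $11/48$ rather than a soft $1/4$ — feed in an additional average of Kloosterman sums over the modulus (as in \cite{DFI97}). Everything else — tracking the common factors $(m,n_1 n_2)$, $(n_1,n_2)$, $(h,k)$ and splicing the pieces back through the two Cauchy--Schwarz steps and the Fourier inversions — is routine and is what the $(M+N)^{\varepsilon}$ absorbs.
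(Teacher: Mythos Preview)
Your proposal is correct and takes the same approach as the paper: both simply cite \cite[Theorem 2]{DFI97}, which is exactly what the paper's proof consists of. Your additional sketch of the DFI argument is helpful context (and broadly accurate in its ingredients), though in the actual DFI paper the amplification step is introduced at the outset rather than as a final refinement of the completion-plus-Weil argument.
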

\begin{proof}
See \cite[Theorem 2]{DFI97}.
\end{proof}

\begin{lemma}\label{le:S-W}
Let $\pi$ be a unitary cuspidal automorphic representation of $\text{GL}_4(\mathbb{A}_{\mathbb{Q}})$. Then, there exists an absolute constant $\kappa>0,$ such that  for every $x \geq 10$, and $(a, q) = 1$ with $q \leq (\log x)^{\kappa}$, $|t| \leq (\log x)^{\kappa}$ we have,
\begin{align*}
\sum_{\substack{p \sim x  \\ p \equiv a \pmod{q}}} \lambda_{\pi}(p)p^{it}\ll_{A}\frac{x}{(\log x)^{A}},
\end{align*}
where the implied constant depends only on $A.$
\end{lemma}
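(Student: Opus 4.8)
The plan is to detect the congruence $p\equiv a\pmod q$ with Dirichlet characters and reduce everything to the zero-free region for twisted $\mathrm{GL}_4$ $L$-functions due to Brumley. By orthogonality,
$$
\sum_{\substack{p\sim x\\ p\equiv a\ (q)}}\lambda_{\pi}(p)p^{it}=\frac{1}{\varphi(q)}\sum_{\chi\bmod q}\overline{\chi}(a)\sum_{p\sim x}\lambda_{\pi}(p)\chi(p)p^{it},
$$
and since $\varphi(q)\le(\log x)^{\kappa}$ it is enough to show, uniformly for all $\chi\bmod q$ with $q\le(\log x)^{\kappa}$ and all $|t|\le(\log x)^{\kappa}$, that $\sum_{p\sim x}\lambda_{\pi}(p)\chi(p)p^{it}\ll_{B}x(\log x)^{-B}$ for every $B>0$. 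First I would pass from the sharp cutoff $p\sim x$ to a smooth one: replacing $\mathbf{1}_{[x,2x)}$ by smooth minorant and majorant supported in $[x(1-(\log x)^{-B}),\,2x(1+(\log x)^{-B})]$ alters the sum by $\ll x(\log x)^{-B/2}$, by Cauchy--Schwarz and Rankin--Selberg on the short interval together with Brun--Titchmarsh to count the primes there. So it suffices to bound $\sum_{p}\lambda_{\pi}(p)\chi(p)p^{it}\phi(p/x)$ for a fixed smooth $\phi$. Passing from $\lambda_{\pi}(p)$ to the Dirichlet coefficients $a_{\pi}(n)\Lambda(n)\chi(n)$ of $-\tfrac{L'}{L}(s,\pi\otimes\chi)$ costs only the prime powers $n=p^{k}$, $k\ge2$, which contribute $O(x^{1/2+\varepsilon})$ by the Kim--Sarnak bound $|a_{\pi}(p^{k})|\ll p^{7k/64}$, and the finitely many ramified primes, which contribute $O_{\pi}(\log x)$. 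After partial summation the task becomes bounding the smoothed prime-power sum $\Psi_{\pi}(\chi,t):=\sum_{n}a_{\pi}(n)\Lambda(n)\chi(n)n^{it}\phi(n/x)$.

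Next comes the contour argument. Since $\pi$ is cuspidal on $\mathrm{GL}_{4}$, the twist $\pi\otimes\chi$ is again cuspidal, so $L(s,\pi\otimes\chi)$ is entire with analytic conductor $\ll_{\pi}(q(3+|\Im s|))^{4}$. By Mellin inversion,
$$
\Psi_{\pi}(\chi,t)=\frac{1}{2\pi i}\int_{(2)}\Big(-\tfrac{L'}{L}(s+it,\pi\otimes\chi)\Big)\widetilde{\phi}(s)\,x^{s}\,ds ,
$$
where $\widetilde{\phi}$ is the Mellin transform of $\phi$; as $\widetilde{\phi}$ decays faster than any polynomial, the part with $|\Im s|\ge(\log x)^{100}$ contributes $\ll x(\log x)^{-B}$ for free. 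On the remaining segment the relevant heights of $s+it$ are $\ll(\log x)^{100}+(\log x)^{\kappa}\ll(\log x)^{100}$, so there $q(3+|\Im(s+it)|)\le(\log x)^{O(1)}$ and its logarithm is $\ll\log\log x$. Now I would invoke Brumley's theorem on narrow zero-free regions for Rankin--Selberg $L$-functions \cite{Bru06} (applied to $\pi\times\chi$; the deduction of the Siegel--Walfisz property is carried out in \cite{KT22}): there are $c_{\pi}>0$ and an absolute exponent $B_{0}\ge1$ such that $L(s,\pi\otimes\chi)\ne0$ for $\Re s\ge 1-c_{\pi}\big(\log q(3+|\Im s|)\big)^{-B_{0}}$, save for at most one simple real zero $\beta$, which can occur only when $\pi\otimes\chi$ is self-dual and which, by Siegel's theorem, satisfies $\beta\le1-c(\varepsilon)q^{-\varepsilon}$ for each $\varepsilon>0$. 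Together with the standard bound $-\tfrac{L'}{L}(s,\pi\otimes\chi)\ll(\log q(3+|\Im s|))^{2B_{0}}$ slightly inside this region (obtained as usual from Borel--Carathéodory and Landau's lemma on the number of zeros near a point, via the Hadamard factorization), shifting the contour to $\Re s=1-\tfrac12 c_{\pi}(\log\log x)^{-B_{0}}$ and collecting the possible exceptional pole yields
$$
\Psi_{\pi}(\chi,t)\ll x\exp\!\Big(-\tfrac{c_{\pi}}{3}\,\frac{\log x}{(\log\log x)^{B_{0}}}\Big)+x^{\beta}\,\mathbf{1}_{\mathrm{exc}}+O(x^{1/2+\varepsilon}).
$$

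Finally I would fix $\kappa$ and collect terms. The first term is $\ll_{B}x(\log x)^{-B}$ for every $B$ and any fixed $\kappa$; the exceptional term is $\ll x\exp(-c(\tfrac12)(\log x)^{1-\kappa/2})$, which is $\ll_{B}x(\log x)^{-B}$ for every $B$ once $\kappa<2$; the prime-power and smoothing errors are negligible. Hence any fixed $\kappa\in(0,2)$ is admissible (in particular the claimed absolute $\kappa$), and summing the dyadic estimate back over the $\le(\log x)^{\kappa}$ characters proves the Lemma. The only genuine input is Brumley's zero-free region; everything else is the routine Perron/Mellin contour-shift machinery. The one real subtlety is the possible exceptional real zero of $L(s,\pi\otimes\chi)$ for the self-dual twists (which occur for infinitely many $\chi$ when $\pi$ itself is self-dual); this is disposed of, ineffectively, by Siegel's theorem, consistently with the implied constant here being allowed to be ineffective.
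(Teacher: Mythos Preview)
Your approach is essentially the paper's: detect the congruence by characters, pass to the von Mangoldt--weighted sum, shift the contour using a standard zero-free region, and treat the possible exceptional zero separately. The routine pieces (smoothing, prime powers via Kim--Sarnak, Mellin inversion) are all fine.

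The divergence, and the gap, is in the exceptional zero. You invoke ``Siegel's theorem'' for $\beta\le 1-c(\varepsilon)q^{-\varepsilon}$ and from this deduce that any $\kappa\in(0,2)$ works. There is no off-the-shelf Siegel theorem of this strength for $\mathrm{GL}_4\times\mathrm{GL}_1$; the classical positivity argument does not carry over without further input. What Brumley actually supplies (and what the paper uses) is the \emph{effective} lower bound $L(1,\pi\times\chi)\gg q^{-A}$ for a fixed absolute $A$; combining this with the mean value theorem and the crude estimate $\max_{\beta\le\sigma\le 1}|L'(\sigma,\pi\times\chi)|\ll_\pi q$ yields $\beta\le 1-c_{A,\pi}q^{-A-1}$, and one then takes $\kappa=1/(2A+2)$. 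The paper notes in its remark that to push to arbitrary $\kappa>0$ one would need the recent (ineffective) Harcos--Thorner zero-free region, not anything classical. Relatedly, the log-type region $\sigma>1-c_\pi(\log q(3+|t|))^{-B_0}$ you attribute to Brumley is not his; that is the ordinary de la Vall\'ee Poussin region (as in Iwaniec--Kowalski \S5.6, which the paper cites for the contour shift), while Brumley's contribution is precisely the exceptional-zero bound you glossed over.

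The repair is minor: replace your Siegel claim by Brumley's actual $\beta\le 1-c_\pi q^{-A-1}$ and downgrade the conclusion to ``some small explicit $\kappa>0$''. That is exactly the lemma's statement and exactly what the paper does.
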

\begin{remark}
We will use Brumley's \cite{Bru06} effective zero-free region. Using the recent Thorner-Harcos \cite{TH} zero-free region would have allowed us to prove Lemma \ref{le:S-W} with $\kappa$ any fixed positive number, however the bounds would involve an ineffective implicit constant.  
  \end{remark}
\begin{proof}
By the orthogonality of characters, we have
\begin{align*}
\sum_{\substack{n \sim x  \\ n \equiv a \pmod{q}}} \Lambda(n)\lambda_{\pi}(n)n^{it}\ll \max_{\chi\pmod{q}}\Big|\sum_{\substack{n \sim x }} \Lambda(n)\lambda_{\pi}(n)n^{it}\chi(n)\Big|,
\end{align*}
where $\Lambda(\cdot)$ is the von Mangoldt function.
\marginpar{Check reference}
Proceeding as \cite[\textsection 5.6]{IK04} (in conjunction with Rankin-Selberg) we then obtain 
\begin{equation}\label{8.3}
\sum_{\substack{n \sim x }} \Lambda(n)\lambda_{\pi}(n)n^{it}\chi(n)=-\frac{x^{\beta-it}}{\beta-it}+O(q^2(1+|t|)^2x\exp(-c_{\pi}\sqrt{\log x})),
\end{equation}
if $\beta$ is the corresponding Landau-Siegel zero of the $L$-function $L(s,\pi\times\chi),$ and 
\begin{equation}\label{8.4}
\sum_{\substack{n \sim x }} \Lambda(n)\lambda_{\pi}(n)n^{it}\chi(n)=O(q^2(1+|t|)^2x\exp(-c_{\pi}\sqrt{\log x})),
\end{equation}
if $L(s,\pi\times\chi)$ does not have such an exceptional zero. Here 
$c_{\pi}>0$ is a constant depending only on $\pi.$

Suppose the $L$-function $L(s,\pi\times\chi)$ has a Landau-Siegel zero $\beta\in [1/2,1).$ By \cite[Theorem 5]{Bru06} we have 
\begin{equation}\label{8.5}
L(1,\pi\times\chi)\gg_A q^{-A}	
\end{equation}
for some absolute constant $A>0.$ It follows from the mean value theorem that 
\begin{equation}\label{8.6}
L(1,\pi\times\chi)=L(1,\pi\times\chi)-L(\beta,\pi\times\chi)\ll (1-\beta)\max_{\beta\leq \sigma\leq 1}|L'(\sigma,\pi\times\chi)|.
\end{equation}
Combining \eqref{8.5}, \eqref{8.6} with the trivial estimate 
$$
\max_{\beta\leq \sigma\leq 1}|L'(\sigma,\pi\times\chi)|\ll_{\pi} q
$$
we obtain that $\beta\leq 1-c_{A,\pi}q^{-A-1},$ where $c_{A,\pi}>0$ is a constant relying only on $A$ and $\pi.$ Take $\kappa=1/(2A+2).$ Then \eqref{8.3} and \eqref{8.4} leads to 
\begin{align*}
\sum_{\substack{n \sim x }} \Lambda(n)\lambda_{\pi}(n)n^{it}\chi(n)\ll x\exp(-c_{A,\pi}\sqrt{\log x})+x(\log x)^{4\kappa}\exp(-c_{\pi}\sqrt{\log x})
\end{align*}
for all $q\leq (\log x)^{\kappa},$ $|t|\leq (\log x)^{\kappa},$ and $\chi\pmod{q}.$ Therefore, 
\begin{align*}
\sum_{\substack{n \sim x  \\ n \equiv a \pmod{q}}} \Lambda(n)\lambda_{\pi}(n)n^{it}\ll x\exp(-c_{\pi}'\sqrt{\log x})
\end{align*}
for some constant $c_{\pi}'$ which depends only on $\pi.$ Now Lemma \ref{le:S-W} follows from integration by parts.
\end{proof}

We first establish a dispersion estimate from which Proposition \ref{prop:dispersion2} readily follows (see the end of this section). This is a variant of the work of Fouvry-Radziwi\l\l. Stronger version with much larger ranges could be easily obtained but are of no relevance for us.

\begin{lemma}
  Let $M \geq 1$, $0 < \nu < 1/1000$ and $\exp(\log^{\nu} M) \leq P \leq M^{1/1000}$ be given.
  Let $\kappa$ be such that $\lambda_{\pi}(p)$ is Siegel-Walfisz of level $\kappa$.
  Let $\mathcal{D} \subset [D / 100, 100 D]$ be a set of square-free integers such that all prime factors of $\mathcal{D}$ are
  either $\leq (\log P)^{\kappa}$ or larger than $z \geq (\log D)^{10000}$.
  Let $\beta$ be arbitrary coefficients with $|\beta| \leq 1$ and the support of $\beta$ is contained in $\mathcal{D}$, and $\alpha$ an arbitrary sequence supported in $[M, 2M]$. Let $|t| \leq (\log P)^{\kappa}$. 
  Then, provided that $D^2 (\log D)^{-10^9} \leq M$, we have for any given $A > 10$,
  \begin{align*}
  \sum_{d \sim D} \beta(d) \sum_{\substack{(m,p) = 1 \\ p \sim P,\  m \sim M \\ p m \equiv \pm 1 \pmod{d}}} & \frac{\lambda_{\pi}(p) p^{it} \alpha(m)}{\sqrt{pm}} \\ & \ll
  \Big ( \sum_{m} \frac{|\alpha(m)|^2}{m} \Big )^{1/2} \sqrt{M P} \cdot \Big ( \frac{C(A)}{(\log P)^{A}} + \frac{(\log D)^2}{z^{1/4}} \Big )
  \end{align*}
  where $C(A)$ is a constant depending only on $A$. 
\end{lemma}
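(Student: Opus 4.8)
The plan is to run a dispersion argument in the spirit of Fouvry--Radziwi\l\l. The first step is Cauchy--Schwarz in the variable $m$ to eliminate the arbitrary coefficients $\alpha$: writing the left side as $\sum_{m\sim M}\frac{\alpha(m)}{\sqrt m}c(m)$ with $c(m):=\sum_{d\sim D}\beta(d)\sum_{p\sim P,\ (p,m)=1,\ pm\equiv\pm1\,(d)}\lambda_\pi(p)p^{it}/\sqrt p$, it suffices to bound $\Sigma:=\sum_{m\sim M}|c(m)|^2\ll MP\bigl((\log P)^{-2A}+(\log D)^4z^{-1/2}\bigr)$. One then inserts a fixed smooth majorant $\Phi(m/M)\ge\mathbf{1}_{m\sim M}$ and expands the square, turning $\Sigma$ into a quadruple sum over $d_1,d_2\sim D$, $p_1,p_2\sim P$ of $\beta(d_1)\overline{\beta(d_2)}\lambda_\pi(p_1)\overline{\lambda_\pi(p_2)}p_1^{it}p_2^{-it}(p_1p_2)^{-1/2}$ against the inner count $T:=\sum_{p_1m\equiv\pm1(d_1),\ p_2m\equiv\pm1(d_2),\ (m,p_1p_2)=1}\Phi(m/M)$. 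It is convenient to record that each $d$ factors as $ef$ with $e$ its $(\log P)^\kappa$-smooth part and $f$ its rough part (primes $>z$), that by Rankin's trick the $d$ with $e>(\log P)^\kappa$ contribute negligibly so effectively $e\le(\log P)^\kappa$, and that throughout $z$ is tiny compared with $P$ (since $P\ge\exp(\log^\nu M)$ is super-polynomial in $\log M$ while $z$ will be polynomial in $\log D$).

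The inner sum $T$ vanishes unless the congruences are compatible, which forces $p_1\equiv\pm p_2\pmod g$ with $g:=(d_1,d_2)$; when compatible it reduces to one congruence modulo $q:=[d_1,d_2]$, and Poisson summation modulo $q$ (with $(m,p_1p_2)=1$ removed by M\"obius) gives $T=\widehat\Phi(0)(M/q)\kappa_{p_1p_2}+\mathcal E$, where $\kappa_{p_1p_2}=1+O(1/P)$ and $\mathcal E$ collects the dual frequencies $1\le|\ell|\ll(q/M)\log D$ --- a short range, since the hypothesis $D^2(\log D)^{-10^9}\le M$ forces $q/M\le D^2/M\le(\log D)^{10^9}$. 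For the main term $\widehat\Phi(0)M\sum_{d_1,d_2}\frac{\beta(d_1)\overline{\beta(d_2)}}{q}\sum_{p_1\equiv\pm p_2(g)}(\cdots)\kappa_{p_1p_2}$ I would split on $g$. If $g\le(\log P)^\kappa$ then, using $|t|\le(\log P)^\kappa$, Lemma~\ref{le:S-W} (after partial summation to remove $p_1^{-1/2}$) gives $\sum_{p_1\sim P,\ p_1\equiv a(g)}\lambda_\pi(p_1)p_1^{it}/\sqrt{p_1}\ll_A P^{1/2}(\log P)^{-A}$ for every $a$ coprime to $g$, which after a trivial bound on the $p_2$-sum and $\sum_{d_1,d_2}|\beta(d_1)\beta(d_2)|/q\ll(\log D)^{O(1)}$ is $\ll_A MP(\log P)^{-A}$. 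If $g$ has a prime factor $>z$ then $g>z$, the congruence $p_1\equiv\pm p_2\pmod g$ thins the $p$-sum (Brun--Titchmarsh), and the resulting tail sums over rough $g$ (of the shape $\sum_{g\ \mathrm{rough}}g^{-3/2}\ll z^{-1/2}$) give $\ll MP(\log D)^4z^{-1/2}$; the smooth $g>(\log P)^\kappa$ are negligible as above. The same trichotomy applied to $\mathcal E$ shows its rough-$g$ and large-smooth-$g$ parts are likewise within budget.

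The remaining piece --- the error term $\mathcal E$ over configurations with $g\le(\log P)^\kappa$ --- is the crux. After completing the sum and applying reciprocity to the phase $e(\ell\mu/q)$ (with $\mu$ the Chinese Remainder lift of $\pm\overline{p_i}\bmod d_i$), one rewrites it, away from the diagonal $p_1\equiv p_2$ (which is absorbed into the main-term estimate), as a slowly varying factor times a genuine Kloosterman fraction $e(c\,\overline{d_1}/d_2)$ with $c\ne0$ an integer built from $\ell,\overline{p_1},\overline{p_2}$; then Lemma~\ref{le:DFI} applies with the two moduli $d_1,d_2\asymp D$ as the bilinear variables, the slowly varying factor absorbed into $F$, and $\sum_{p\sim P}|\lambda_\pi(p)|^2/p\ll\log P$, $\sum_{d\sim D}|\beta(d)|^2\le D$ bounding the two square-sums appearing in Lemma~\ref{le:DFI}. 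The size hypothesis once more keeps $|c|+D^2$ and $D$ in the admissible range and bounds the number of $\ell$, and the exponent $\tfrac{11}{48}<1$ in Lemma~\ref{le:DFI} produces an honest power saving, comfortably absorbing the $z^{-1/2}$ budget since $z$ is at most a small power of $D$. Assembling the three contributions yields $\Sigma\ll MP\bigl((\log P)^{-2A}+(\log D)^4z^{-1/2}\bigr)$, hence the lemma. The hard part is precisely this last step: carrying the exact Kloosterman fraction through the CRT and reciprocity (particularly when $g>1$), checking that the integer ``$c$'' playing the role of the frequency in Lemma~\ref{le:DFI} is nonzero and of controlled size off the $p_1\equiv p_2$ locus, disposing of that locus, and verifying the derivative bounds needed to absorb the extra smooth factor; the power $z^{-1/4}$ in the final statement is not optimized and is taken with room to spare.
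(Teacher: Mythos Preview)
Your overall strategy matches the paper's: Cauchy--Schwarz in $m$, open the square, Poisson in $m$, Siegel--Walfisz for the frequency $\ell=0$, and Duke--Friedlander--Iwaniec for $\ell\ne0$. Minor reorderings aside (the paper drops $(m,p)=1$ \emph{before} Cauchy--Schwarz via Kim--Sarnak rather than by M\"obius inside, handles the case $g=(d_1,d_2)>z$ by a direct count before Poisson rather than by Brun--Titchmarsh after, and isolates $p_1=p_2$ with $\ell\ne0$ as a separate trivially-bounded case), the architecture is identical.

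The one point that needs correction is the shape of the Kloosterman fraction you feed into Lemma~\ref{le:DFI}. You write the phase as $e(c\,\overline{d_1}/d_2)$ with $c$ an integer ``built from $\ell,\overline{p_1},\overline{p_2}$'' and take $d_1,d_2\asymp D$ as the bilinear variables. This does not work: the inverses $\overline{p_i}$ are taken modulo the varying $d_j$, so no fixed integer $c$ exists, and the phase does not reduce to a Kloosterman fraction with denominator $d_2$ alone. The paper's resolution is to \emph{glue the prime to the modulus}: writing $d_i=g\,d_i'$ and setting $v_1=p_2 d_1'$, $v_2=p_1 d_2'$, the reciprocity chain collapses the phase (for $p_1\ne p_2$, after splitting $d_1',d_2'$ into residue classes $\beta_1,\beta_2\pmod g$) to
\[
e\Bigl(\frac{\pm\ell((p_2-p_1)/g)\,\overline{v_2}}{v_1}\Bigr)\cdot e\Bigl(\frac{\pm\ell p_1}{g\, v_1 v_2}\Bigr),
\]
and now Lemma~\ref{le:DFI} applies with bilinear variables $v_1,v_2\asymp PD/g$, the second (archimedean) factor absorbed into $F$, coefficients $\gamma(v)=\sum_{v=p\delta}\beta(g\delta)\ll\log M$, and frequency $\ell(p_2-p_1)/g\in\mathbb Z\setminus\{0\}$. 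The outer sums over $p_1,p_2,\ell,g,a,\beta_1,\beta_2$ cost only $P^{O(1)}$, which the DFI power saving $(PD)^{-1/40}$ absorbs since $P\le M^{1/1000}$ and $D^2\ll M(\log M)^{10^9}$. Your mention of $\sum_p|\lambda_\pi(p)|^2/p$ and $\sum_d|\beta(d)|^2$ as the two square-sums in DFI reflects this confusion about which variables are bilinear; once you take $v_1,v_2$ as the variables (with the $\gamma$'s as coefficients and $p_1,p_2$ summed outside), the rest of your outline goes through.
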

\begin{proof}
  First we remove the condition $(m,p) = 1$. Notice that the contribution of integers $m$ divisible by $p$ is
  $$
  \ll \sum_{d \sim D} \sum_{\substack{p \sim P, m \sim M / P \\ p^2 m \equiv \pm 1 \pmod{d}}} \frac{|\lambda_{\pi}(p) \alpha(p m)|}{p \sqrt{m}} \ll \sum_{p \sim P} \frac{|\lambda_{\pi}(p)|}{p} \sum_{m \sim M / P} \frac{|\alpha(p m)|}{\sqrt{m}} d(p^2 m \mp 1)
  $$
  To proceed further we use the Kim-Sarnak bound $|\lambda_{\pi}(p)| \ll p^{1/2 - 1/11}$, getting that the above is
  $$
  \ll P^{-1/11} \sum_{\substack{p \sim P, m \sim M / p}} \frac{|\alpha(p m)|}{\sqrt{p m}} \cdot d(p^2 m \mp 1)
  $$
  By Cauchy-Schwarz this is
  $$
  \ll P^{-1/11}  \Big ( \sum_{\substack{p \sim P, m \sim M / P}} \frac{|\alpha(p m)|}{p m} \Big )^{1/2} \cdot \Big ( \sum_{\substack{p \sim P, m \sim M / P}} d(p^2 m \mp 1)^2 \Big )^{1/2}. 
  $$
  In the first sum we group terms according to $n = p m$. Each integer $n$ has at most $\log M$ such representations. For the second term we use Shiu's bound, to find that,
  $$
  \sum_{m \sim M / P} d(p^2 m \mp 1)^2 \ll \frac{M}{P} \cdot (\log M)^{3}. 
  $$
  Therefore the entire expression is bounded by
  $$
  \ll \sqrt{M P} \cdot \Big ( \sum_{m \asymp M} \frac{|\alpha(m)|^2}{m} \Big )^{1/2} \cdot P^{-1/11} (\log M)^4.
  $$
  This is negligible compared to our target bound. We can therefore drop the condition $(m,p) = 1$.

  It remains to bound,
$$  \sum_{d \sim D} \beta(d) \sum_{\substack{p \sim P,\  m \sim M \\ p m \equiv \pm 1 \pmod{d}}} \frac{\lambda_{\pi}(p) p^{it} \alpha(m)}{\sqrt{pm}}. $$
Interchanging sums and using Cauchy-Schwarz, we bound this by,
$$
\Big ( \sum_{m \sim M} \Big | \sum_{\substack{p \sim P, d \sim D \\ p m \equiv \pm 1 \pmod{d}}} \beta(d) \frac{\lambda_{\pi}(p)}{\sqrt{p}} p^{it} \Big |^2 \Big )^{1/2} \cdot \Big ( \sum_{m} \frac{|\alpha(m)|^2}{m} \Big )^{1/2}.
$$
Expanding the square we get
$$
\sum_{\substack{p_1,p_2 \sim P \\ d_1,d_2 \sim D \\ (p_1, d_1) = 1 \\ (p_2, d_2) = 1}} \beta(d_1) \overline{\beta(d_2)} \frac{\lambda_{\pi}(p_1) \overline{\lambda_{\pi}(p_2)}}{\sqrt{p_1 p_2}} p_1^{it} p_2^{-it} \sum_{\substack{m \equiv \pm \overline{p_1} \pmod{d_1} \\ m \equiv \pm \overline{p_2} \pmod{d_2}}} V \Big ( \frac{m}{M} \Big )
$$
Notice that in the sum above $(m, d_1 d_2) = 1$. Write $d_1 = d d_1'$ and $d_2 = d d_2'$ with $d = (d_1, d_2)$. Then
we have $m p_1 \equiv \pm 1 \pmod{d_1}$ and $m p_2 \equiv \pm 1 \pmod{d_2}$ and therefore $d | m (p_1 - p_2)$ and hence
$d | p_1 - p_2$ since $(m, d_1 d_2) = 1$. We notice that this stage that $d$ can be restricted to be less than $(\log P)^{\kappa} $. Indeed if that were not the case then $d > z$, and the trivial bound yields,
$$
\frac{1}{P} \sum_{\substack{p_1, p_2 \sim P}} |\lambda_{\pi}(p_1) \lambda_{\pi}(p_2)| \sum_{\substack{d | p_1 - p_2 \\ d, d_1' , d_2' \ll D \\ d > z}} \frac{M}{d d_1' d_2'}
$$
We now use the inequality
$$
|\lambda_{\pi}(p_1) \lambda_{\pi}(p_2)| \leq |\lambda_{\pi}(p_1)|^2 + |\lambda_{\pi}(p_2)|^2
$$
and complete the sum over $p_1, p_2 \sim P$ to a sum over all integers in the range $[P, 2P)$. Therefore the above expression is bounded by
  \begin{align*}
    & \frac{1}{P} \sum_{\substack{m_1, m_2 \sim P}} |\lambda_{\pi}(m_1)|^2 \sum_{\substack{d | m_1 - m_2 \\ d, d_1', d_2' \ll D \\ d > z}} \frac{M}{d d_1' d_2'} \\ & \ll \frac{1}{P} \sum_{m_1 \sim P} |\lambda_{\pi}(m_1)|^2 \sum_{\substack{d, d_1', d_2' \ll D \\ d > z}} \frac{M}{d d_1' d_2'} \cdot \Big ( \frac{P}{d} + 1 \Big ) \\ & \ll \frac{M P}{z} \cdot (\log D)^2 + M (\log D)^3
  \ll  P M (\log D)^4 \cdot \Big ( \frac{1}{\sqrt{z}} + \frac{1}{P} \Big ). 
  \end{align*}
  as needed. 

Let $a$ run through congruence classes $\pmod{d}$ and write $p_1 \equiv p_2 \equiv a \pmod{d}$. Thus we rewrite the above sum as,
\begin{align} \label{eq:bouned}
\sum_{\substack{d \ll (\log P)^{\kappa} \\ a \pmod{d} \\ (a,d) = 1}} \sum_{\substack{p_1, p_2 \sim P \\ p_1 \equiv a \pmod{d} \\ p_2 \equiv a \pmod{d}}} \frac{\lambda_{\pi}(p_1) p_1^{it} \overline{\lambda_{\pi}(p_2)} p_2^{-it}}{\sqrt{p_1 p_2}} \sum_{\substack{d d_1', d d_2' \sim D \\ (d_1, p_1) = 1 \\ (d_2, p_2) = 1 \\ (d, d_1' d_2') = 1}} \beta(d d_1') \overline{\beta(d d_2')} \sum_{\substack{m \equiv \pm \overline{p_1} \pmod{d_1'} \\ m \equiv \pm \overline{p_2} \pmod{d_2'} \\ m \equiv \pm \overline{a} \pmod{d}}} V \Big ( \frac{m}{M} \Big ).
\end{align}
Notice in \eqref{eq:bouned} the congruence condition can be re-written as,
$$
m \equiv \pm ((\overline{p_1})_{d_1'} d_2 (\overline{d_2})_{d_1'} + (\overline{p_2})_{d_2'} d_1 (\overline{d_1})_{d_2'} + (\overline{a})_{d} (d_1' d_2') (\overline{d_1' d_2'})_{d}) \pmod{d d_1' d_2'}
$$
Therefore Poisson summation gives,
$$
\sum_{\substack{m \equiv \pm \overline{p_1} \pmod{d_1'} \\ m \equiv \pm \overline{p_2} \pmod{d_2'} \\ m \equiv \pm \overline{a} \pmod{d}}} V \Big ( \frac{m}{M} \Big ) = \frac{M}{d d_1' d_2'} \sum_{\ell} \widehat{V} \Big ( \frac{\ell M}{d d_1' d_2'} \Big ) e \Big ( \frac{\pm \ell \overline{p_1 d_2}}{d_1'} + \frac{\pm \ell \overline{p_2 d_1}}{d_2'} + \frac{\pm \ell \overline{a d_1' d_2'}}{d} \Big )
$$
After this transformation \eqref{eq:bouned} summed becomes,
\begin{align} \label{eq:bouned2}
M \sum_{\substack{d \ll (\log P)^{\kappa} \\ a \pmod{d} \\ (a,d) = 1}} \frac{1}{d} \sum_{\substack{p_1, p_2 \sim P \\ p_1 \equiv a \pmod{d} \\ p_2 \equiv a \pmod{d}}} & \frac{\lambda_{\pi}(p_1) p_1^{it} \overline{\lambda_{\pi}(p_2)} p_2^{-it}}{\sqrt{p_1 p_2}} \sum_{\substack{d d_1', d d_2' \sim D \\ (d_1, p_1) = 1 \\ (d_2, p_2) = 1 \\ (d, d_1' d_2') = 1}} \frac{\beta(d d_1')}{d_1'} \frac{\overline{\beta(d d_2')}}{d_2'} \\ \nonumber & \times \sum_{\ell} \widehat{V} \Big ( \frac{\ell M}{d d_1' d_2'} \Big ) e \Big ( \frac{\pm \ell \overline{p_1 d_2}}{d_1'} + \frac{\pm \ell \overline{p_2 d_1}}{d_2'} + \frac{\pm \ell \overline{a d_1' d_2'}}{d} \Big )
\end{align}
We also split $d_1'$ and $d_2'$ into residue classes $\pmod{d}$, so that $d_1'$ and $d_2'$ are respectively congruent to $\beta_1$ and $\beta_2$ modulo $d$. To wit, we re-write the previous equation as
\begin{align} \label{eq:bouned2}
M \sum_{\substack{d \ll (\log P)^{\kappa} \\ a, \beta_1, \beta_2 \pmod{d} \\ (a \beta_1 \beta_2 ,d) = 1}} \frac{1}{d} \sum_{\substack{p_1, p_2 \sim P \\ p_1 \equiv a \pmod{d} \\ p_2 \equiv a \pmod{d}}} & \frac{\lambda_{\pi}(p_1) p_1^{it} \overline{\lambda_{\pi}(p_2)} p_2^{-it}}{\sqrt{p_1 p_2}} \sum_{\substack{d d_1', d d_2' \sim D \\ (d_1, p_1) = 1 \\ (d_2, p_2) = 1 \\ d_1' \equiv \beta_1 \pmod{d} \\ d_2' \equiv \beta_2 \pmod{d}}} \frac{\beta(d d_1')}{d_1'} \frac{\overline{\beta(d d_2')}}{d_2'} \\ \nonumber & \times \sum_{\ell} \widehat{V} \Big ( \frac{\ell M}{d d_1' d_2'} \Big ) e \Big ( \frac{\pm \ell \overline{p_1 d_2}}{d_1'} + \frac{\pm \ell \overline{p_2 d_1}}{d_2'} + \frac{\pm \ell \overline{a \beta_1 \beta_2}}{d} \Big )
\end{align}

Using the bound,
\begin{equation} \label{eq:simplevbound}
\widehat{V}\Big ( \frac{\ell M}{d d_1' d_2'} \Big ) \ll \| V \|_{\infty, 2} \cdot \frac{D^4}{d^2 |\ell|^2 M^2}
\end{equation}
we see that the total contribution of terms with $|\ell| > P^3$ to \eqref{eq:bouned2} is
$$
M P \cdot \frac{D^4}{P^3 M^2} \cdot (\log M)^{10} \ll \frac{M}{P}
$$
using the bound $D^2 \leq M (\log M)^{10^{10}}$. This is negligible.
Therefore we can truncate the sum in \eqref{eq:bouned2} at $|\ell| \leq P^3$ 

\subsection{Diagonal terms}

In \eqref{eq:bouned2} we isolate the contribution of the diagonal terms $\ell = 0$. This amounts to
$$
M \sum_{\substack{d \ll (\log P)^{\kappa} \\ a \pmod{d} \\ (a,d) = 1}} \frac{1}{d} \cdot \Big | \sum_{\substack{d d' \sim D \\ (d', d) = 1}} \frac{\beta(d d')}{d'} \sum_{\substack{p \sim P \\ (p, d') = 1 \\ p \equiv a \pmod{d}}} \frac{\lambda_{\pi}(p)}{\sqrt{p}} p^{it} \Big |^2.
$$
Notice that we can bound this sum by appealing to Siegel-Walfisz for $\pi \in \mathrm{GL}_{4}(\mathbb{A}_{\mathbb{Q}})$, namely, Lemma \ref{le:S-W}. This yields, a bound of
$$
\ll_{A} M P \Big ( (\log P)^{-A} +  P^{-2/11} \log^2 M \Big ) \ll_{A} M P (\log P)^{-A},
$$
for any given $A > 10$. Note that $(\log M) P^{-1/11}$ is the contribution of primes $p \sim P$ dividing $d'$, and the exponent
$P^{-2/11}$ arises from an application of the Kim-Sarnak bound $|\lambda_{\pi}(p)| \ll p^{1/2 - 1/11}$. 

\subsection{Off-diagonal terms with $p_1 = p_2$}

We bound the contribution of terms with $\ell \neq 0$ and $p_1 = p_2$ in \eqref{eq:bouned2} as,
$$
 \ll M \sum_{d \ll (\log P)^{\kappa}} \frac{1}{d} \sum_{p \sim P} \frac{|\lambda_{\pi}(p)|^2}{p} \sum_{d d_1', d d_2' \sim D} \frac{1}{d_1'd_2'} \sum_{\ell \neq 0} \Big | \widehat{V} \Big ( \frac{\ell M}{d d_1' d_2'} \Big ) \Big |
$$
Using \eqref{eq:simplevbound} for $|\ell| >  D^2 / (Md)$ and the trivial bound $\widehat{V} \ll \| V \|_{\infty}$ otherwise, we conclude that the above is
$$
\ll M (\log M)^{2} \cdot \frac{D^2}{M} \ll M (\log M)^{C}  
$$
with $C > 10$ an absolute constant since $D \ll \sqrt{M} (\log M)^{10^9}$. 
\subsection{Off-diagonal terms with $p_1 \neq p_2$}
We transform the phase of the exponential sum by noticing that, for $p_1 \neq p_2$ with $p_1 \equiv p_2 \equiv a \pmod{d}$, $d_1' \equiv \beta_1 \pmod{d}$ and $d_2' \equiv \beta_2 \pmod{d}$, 
\begin{align*}
\frac{\overline{p_1 d_2}}{d_1'} + \frac{\overline{p_2 d_1}}{d_2'} + \frac{\overline{a \beta_1 \beta_2}}{d}  & \equiv \frac{\overline{p_1 d_2}}{d_1'} - \frac{\overline{d_2'}}{p_2 d_1} + \frac{\overline{a \beta_1 \beta_2}}{d} + \frac{1}{p_2 d_1 d_2'}  \\ & \equiv \frac{\overline{p_1 d_2}}{d_1'} - \frac{\overline{d_2}}{p_2 d_1'} - \frac{\overline{p_2 d_1' d_2'}}{d} + \frac{\overline{a \beta_1 \beta_2}}{d} + \frac{1}{p d_1 d_2'} \\ & \equiv \frac{(p_2 - p_1) \overline{p_1 d_2}}{p_2 d_1'} - \frac{\overline{a \beta_1 \beta_2}}{d} + \frac{\overline{a \beta_1 \beta_2}}{d}  + \frac{1}{p_2 d_1 d_2'} \\ & \equiv \frac{((p_2 - p_1) / d) \overline{p_1 d_2'}}{p_2 d_1'} + \frac{1}{p_2 d d_1' d_2'} \pmod{1}
\end{align*}
It then remains to bound,
\begin{align} \label{eq:bigsum}
\| V \|_{\infty} \cdot \frac{1}{P} & \sum_{\substack{d \ll (\log P)^{\kappa} \\ a, \beta_1, \beta_2 \pmod{d} \\ (a\beta_1 \beta_2 , d) = 1}} \sum_{\substack{p_1 \neq p_2 \sim P \\ p_1 \equiv a \pmod{d} \\  p_2 \equiv a \pmod{d}}} \frac{d M}{D^2}
 \sum_{0 \neq |\ell| \leq P^3} |\lambda_{\pi}(p_1) \lambda_{\pi}(p_2)| \\ & \ \ \times \Big | \sum_{v_1, v_2 \sim P D / d} \gamma_{p_1, p_2, d, \beta_2}(v_2) \gamma_{p_2, p_1, d, \beta_1}(v_1) e \Big ( \frac{\pm \ell ((p_2 - p_1) /d) \overline{v_2}}{v_1} \Big ) \widehat{V} \Big ( \frac{\ell M p_1 p_2}{d v_1 v_2} \Big ) e \Big ( \frac{\pm \ell p_1}{d v_1 v_2} \Big ) \Big |,
\end{align}
where
$$
\gamma_{p_1, p_2, d, \beta}(v) := \sum_{\substack{v = p_1 \delta \\ (\delta, p_2) = 1 \\ \delta \equiv \beta \pmod{d}}} \beta(d \delta) \ll \log M
$$
Applying Lemma \ref{le:DFI} with
$$
F(v_1,v_2) := \widehat{V} \Big ( \frac{\ell M p_1 p_2}{d v_1 v_2} \Big ) e \Big ( \frac{\pm \ell p_1}{d v_1 v_2} \Big )
$$
we obtain the following bound for \eqref{eq:bigsum}
$$
\ll P^{100} \cdot (P D)^{2 - \frac{1}{40}} \ll M
$$
since $D \leq \sqrt{M} (\log M)^{10^9}$ and
$\exp(\log^{\nu} M) \leq P \leq M^{1/1000}$.
Combining all these estimates together it follows that \eqref{eq:bouned} is
$$
\ll_{A} \frac{M P}{(\log P)^{A}}.
$$
for any given $A > 10$. 
Collecting all the previous bounds the claim follows. 
\end{proof}

For convenience we recall here the statement of Proposition \ref{prop:dispersion2}.

\propdispersion*

\begin{proof}
We localize $c \sim C$, $ d \sim D$, $p \sim P$ and $m \sim M$ with $C D \asymp Q$ and $P M \asymp N$. We then open $V$ into Mellin transform, so that the sum is
\begin{align*}
\ll \sum_{\substack{C D \asymp Q \\ P M \asymp N}} \int_{\mathbb{R}} |\widetilde{V}(i t)| \Big | \sum_{\substack{c \sim C, d \sim D \\ c d \in \mathcal{Q}}} \mu(c) \varphi(d)  \sum_{\substack{(m,p) = 1 \\ p \sim P, m \sim M \\ p m \equiv \pm 1 \pmod{d}}} \frac{\lambda_{\pi}(p) \alpha(m)}{\sqrt{p m}} (p m)^{i t} \Big | dt
\end{align*}
We can further bound this by
\begin{equation} \label{eq:inte}
\ll \sum_{\substack{C D \asymp Q \\ P M \asymp N}} D \int_{\mathbb{R}} |\widetilde{V}(i t)| \sum_{c \sim C} \Big | \sum_{d \sim D} \beta_{c}(d) \sum_{\substack{(m, p) = 1 \\ p \sim P, m \sim M \\ p m \equiv \pm 1 \pmod{d}}} \frac{\lambda_{\pi}(p) p^{it} \alpha_t(m)}{\sqrt{p m}} \Big | dt
\end{equation}
where $|\beta_{c}(d)| \leq 1$, uniformly in $c$, and
$$
|\alpha_t(m)| \leq |\lambda_{\pi}(m)|
$$
uniformly in $t \in \mathbb{R}$.
The sum over $d, p$ and $m$ is trivially bounded by
\begin{align*}
  & \ll \frac{1}{\sqrt{P M}} \sum_{\substack{(p, m) = 1 \\ p \sim P \\ m \sim M}} |\lambda_{\pi}(p) \alpha_t(m)| d(p m \mp 1)
  \\ & \ll \frac{\log M}{\sqrt{P M}} \sum_{n \asymp M P} |\lambda_{\pi}(n)| \cdot d(n \mp 1) \ll \sqrt{M P} (\log M)^{10}  
\end{align*}
where in the second bound we used that $|\alpha_t(m)| \leq |\lambda_{\pi}(m)|$, $(p,m) = 1$ and that $m$ has at most $\log M$ prime divisors, and where in the last bound we used Cauchy-Schwarz and Rankin-Selberg. Therefore, the contribution of $|t| > (\log P)^{\kappa}$ to \eqref{eq:inte}
is 
$$
\ll_{A, \kappa} \sqrt{M P} (\log P)^{-A} 
$$
for any given $A > 10$. This allows us to truncate the integral in \eqref{eq:inte} to $|t| \leq (\log P)^{\kappa}$. 
In the remaining range $|t| \leq (\log P)^{\kappa}$, by Proposition \ref{prop:dispersion2}, 
$$
\sum_{d \sim D} \beta(d) \sum_{\substack{(p,m) = 1 \\ p \sim P , m \sim M \\ p m \equiv \pm 1 \pmod{d}}} \frac{\lambda_{\pi}(p) p^{it} \alpha(m)}{\sqrt{pm}} \ll \sqrt{M P} \cdot \Big ( \frac{C(A)}{(\log P)^{A}} + \frac{(\log M)^2}{z^{1/4}} \Big ).
$$
for any given $A > 10$. Therefore, upon summing over all ranges, \eqref{eq:inte} is bounded by
$$
\ll Q \sqrt{N} \cdot \Big ( \frac{C(A)}{(\log N)^{\nu A - 10}} + \frac{(\log Q)^{4}}{z^{1/4}} \Big )
$$
and this gives the claim upon taking $A$ large enough with respect to $\nu$ (thus making the constant $C(A)$ depend on both $\nu$ and $A$). 
\end{proof}

\bibliographystyle{alpha}

\bibliography{Nonvanishing}

\end{document}